\documentclass[reqno,11pt]{amsart}
\usepackage{amsfonts}
\usepackage{a4wide}	
\usepackage{bm}

\numberwithin{equation}{section}

\usepackage{amsmath,amssymb,amsthm,amsfonts}
\usepackage{mathrsfs}
\usepackage{bbm}
\usepackage{hyperref}
\hypersetup{colorlinks=true,linkcolor=blue,citecolor=green,urlcolor=blue}
\usepackage[norefs,nocites]{refcheck}
\usepackage{xcolor}
\usepackage{extarrows}

\allowdisplaybreaks[4]%
\expandafter\providecommand\csname subjclassname@2020\endcsname{2020 Mathematics Subject Classification}

\newtheorem{lemma}{Lemma}[section]
\newtheorem{theorem}{Theorem}[section]
\newtheorem{definition}{Definition}[section]
\newtheorem{proposition}{Proposition}[section]
\newtheorem{remark}{Remark}[section]

\renewcommand{\S}{\mathbb{S}}

\newcommand{\om}{\omega}

\newcommand{\eqdef}{\overset{\mbox{\tiny{def}}}{=}}

\newcommand{\norm}[1]{\left\Vert #1 \right\Vert}
\newcommand{\moc}[1]{\left\lvert #1 \right\rvert}

\date{}							

\everymath{\displaystyle}
\begin{document}
\raggedbottom
	\title[Relativistic collision geometry and near-vacuum solutions]{Relativistic collision geometry and global near-vacuum solutions at critical transverse regularity}

\author[X.-F. Hu]{Xinfeng Hu}
\address[XFH]{School of Mathematics Science, Fudan University, Shanghai 200433, P.R. ~China}
\email{xfhu25@m.fudan.edu.cn}

\author[S.-Q. Liu]{Shuangqian Liu}
\address[SQL]{School of Mathematics and Statistics, and Key Lab NAA-MOE, Central China Normal University, Wuhan 430079, P.R.~China}
\email{sqliu@ccnu.edu.cn}

\author[H.-R. Peng]{Haoran Peng}
\address[HRP]{School of Mathematics and Statistics, and Hubei Key Lab--Math. Sci., Central China Normal University, Wuhan 430079, P.R.~China}
\email{mathematicphr@mails.ccnu.edu.cn}

\author[Y. Zhou]{Yi Zhou}
\address[YZ]{School of Mathematics Science, Fudan University, Shanghai 200433, P.R. ~China}
\email{yizhou@fudan.edu.cn}

\date{}

\subjclass[2020]{Primary: 35Q20, 35B35, 35B36; Secondary: 76P05}	

\keywords{Relativistic Boltzmann equation, near vacuum, critical transverse regularity, transport interaction estimates, relativistic collision geometry.}

	\begin{abstract}
We establish global well-posedness near vacuum for the relativistic Boltzmann equation with soft and hard potentials in the range $\gamma\in(-2,1]$. The initial data have critical Besov regularity of order $2/r$ in the transverse spatial variables for an admissible range of $r$ depending on $\gamma$, together with weighted integrability in the longitudinal and momentum variables. The proof combines a fixed-direction div--curl lemma with bilinear collision estimates. In particular, to control the gain term, we combine localization to spherical caps with energy-weighted angular estimates. The key relativistic estimate controls the singularity associated with the first component of the post-collisional relative velocity. These bounds yield the required time decay.
\end{abstract}

\maketitle

	
\setcounter{tocdepth}{1}
\tableofcontents
\section{Introduction}

\subsection{Relativistic transport and the Cauchy problem}

A near-vacuum distribution describes a rare gas whose long-time dynamics are governed by the competition between free transport and quadratic collisions. For the relativistic Boltzmann equation, a basic question is whether global solutions can be constructed with limited spatial regularity and no momentum derivatives. We address this question in a weighted anisotropic space, imposing critical regularity only in two transverse spatial variables. With the rest mass and speed of light normalized to one, write

$$
p^0=\sqrt{1+|p|^2},
\qquad
\widehat p=\frac{p}{p^0},
\qquad
(t,x,p)\in[0,\infty)\times\mathbb R^3\times\mathbb R^3.
$$
Here $p$ is momentum, $p^0$ is energy, and $\widehat p$ is transport velocity. The particle distribution $f=f(t,x,p)$ is nonnegative and solves
\begin{equation}\label{equ:Rela-Boltzmann}
    \left\{
\begin{aligned}
\partial_t f+\widehat p\cdot\nabla_x f&=Q(f,f),\\
f(0,x,p)&=f_0(x,p).
\end{aligned}
\right.
\end{equation}
The bilinear collision operator is defined by
$$
\begin{aligned}
Q(f,h)(t,x,p)
={}&\int_{\mathbb R^3}\int_{\mathbb S^2}
B(p,q,\omega)
\bigl[
f(t,x,p')h(t,x,q')
-f(t,x,p)h(t,x,q)
\bigr]
\,d\omega\,dq\\
\eqdef&Q^+(f,h)(t,x,p)-Q^-(f,h)(t,x,p),
\end{aligned}
$$
where $Q^+$ and $Q^-$ are the gain and loss terms, respectively.

The collision geometry is expressed in center-of-momentum coordinates. Introduce
$$
g=\sqrt{2(p^0q^0-p\cdot q-1)},
\qquad
\mathfrak s=4+g^2,
\qquad
P=p+q,
\qquad
E=p^0+q^0.
$$
The post-collisional momenta and energies are given by
\begin{equation}\label{defi:p'}
    \begin{aligned}
p'&=\frac P2+\frac g2
\left(
\omega+
\frac{P(P\cdot\omega)}
{\sqrt{\mathfrak s}(E+\sqrt{\mathfrak s})}
\right),
&
q'&=P-p',\ \om\in\S^2,\\
p'^0&=\frac E2+
\frac{gP\cdot\omega}{2\sqrt{\mathfrak s}},
&
q'^0&=E-p'^0.
\end{aligned}
\end{equation}

This parametrization preserves momentum and energy:
$$
P = p+q=p'+q',
\qquad
E =  p^0+q^0=p'^0+q'^0.
$$
For $g>0$, the scattering angle $\theta\in[0,\pi]$ is the angle between the incoming and outgoing relative momenta in the center-of-momentum frame. It is defined invariantly by
\[
\cos\theta
=\frac{(p-q)\cdot(p'-q')-(p^0-q^0)(p'^0-q'^0)}{g^2};
\]
see \cite[Eq.~(1.4)]{Strain2010Vacuum}. On the null set $g=0$, we set $\theta=0$ and define the collision kernel to be zero; this convention does not affect the collision integrals.

For $g>0$, the kernel has the form
\begin{equation*}
B(p,q,\omega)
=\frac{g\sqrt{\mathfrak s}}{4p^0q^0}
\,g^{\gamma-1}\sigma_0(\theta),
\qquad
0\leq\sigma_0(\theta)\leq C_0,
\qquad
-2<\gamma\leq 1,
\end{equation*}
where $\sigma_0$ is measurable. Thus the differential cross section contains the factor $g^{\gamma-1}$, while the total power of $g$ in the collision kernel, apart from $\sqrt{\mathfrak s}/(4p^0q^0)$, is $\gamma$. All subsequent restrictions on $\gamma$ refer to this convention.

Since $Q$ is quadratic, its linearization at vacuum is zero and the linearized equation is free transport. Thus global control near vacuum must come from the accumulated interaction of particle trajectories. The relativistic velocity $\widehat p=p/p^0$ is bounded, and its derivative degenerates at high momentum. Consequently, momentum separation alone does not give a uniform measure of transport separation. This is the main obstruction to carrying classical low-regularity estimates directly into the relativistic setting.

Our purpose is to quantify that interaction in an integral norm, with polynomial localization along characteristics, while avoiding pointwise control over the full phase space. This permits rough dependence on the longitudinal and momentum variables. The gain term is decisive: one must control the separation of the outgoing trajectories even when their longitudinal velocities coincide. The estimates below show how the angular geometry and energy weights compensate for this degeneracy.

We fix the spatial decomposition
$$
x=(\eta,y),
\qquad
\eta=x_1\in\mathbb R,
\qquad
y=(x_2,x_3)\in\mathbb R^2,
$$
and resolve frequencies only in $y$. The same longitudinal direction is used for every momentum pair and every dyadic block. This choice specifies the solution space once and for all; the directional degeneracy that it leaves in the collision integral becomes a central object of the estimates.

\subsection{Assumptions and the main result}
\label{subsec:rel-main-results}
Let $r'=r/(r-1)$, and assume that
\begin{equation}\label{equ:gamma-r-1}
  \begin{cases}
\displaystyle
\max\left\{
\frac{3}{\gamma+2},\frac{2}{-\gamma}
\right\}<r<\infty,
& -2<\gamma<0,
\\[2ex]
2<r<\infty,
& 0\leq\gamma\leq1.
\end{cases}
\end{equation}
In both cases, choose $\ell$ and $m$ so that
\begin{equation}\label{equ:gamma-r-2}
  \ell>1+\frac{3}{r'},\qquad m>\frac{3\ell}{2}+\frac12+\frac{1}{2r'}.
\end{equation}
After fixing these parameters, choose
\begin{equation}\label{equ:vare}
0<\varepsilon<\min\left\{
\frac13, 2-r', 3+(\gamma-1)r', \moc{2-(\gamma+2)r'}
\right\}.
\end{equation}
For $\ell,m$ above, we use the time-dependent weight
$$
W(t,x,p)
=\langle x-(1+t)\widehat p\rangle^\ell(p^0)^m,
\qquad
W_0(x,p)=\langle x-\widehat p\rangle^\ell(p^0)^m,
$$
where $\langle z\rangle=(1+|z|^2)^{1/2}$. Let $\Delta_j$, $j\geq-1$, denote the nonhomogeneous Littlewood--Paley projections in $y$ defined in Section~\ref{sec:weighted-spaces}. Define the initial-data space by
$$
 \tilde L_{p,\eta}^r(B_{r,1,W_0}^{2/r}) ,
$$
where
$$
\|f\|_{ \tilde L_{p,\eta}^r(B_{r,1,W_0}^{2/r}) }
:=\sum_{j\geq-1}2^{2j/r}
\|W_0(x,p)\Delta_j f\|_{L^r_{x,p}}.
$$
For $T\in(0,\infty]$, write $W_p=W(t,x,p)$, define the energy and interaction functionals
$$
\begin{aligned}
\mathscr E_T(f)
&=\sum_{j\geq-1}2^{\frac{2j}{r}}
\|W\Delta_j f\|_{L^\infty(0,T;L^r_{x,p})},\\
\mathscr D_T(f,g)
&=\sum_{j,k\geq-1}2^{\frac{2(j+k)}{r}}
\norm{\moc{\widehat{p}_1 - \widehat{q}_1}^{\frac{1}{r}}\norm{W_p\Delta_j f(p)}_{L_y^{r}}\norm{W_q\Delta_k g(q)}_{L_y^{r}}}_{L_{t,p,q,\eta}^{r}},
\end{aligned}
$$
where the time interval in the last norm is $[0,T]$.
In these definitions, the weight is applied after the frequency projection.
We also write
$\mathscr D_T(f)=\mathscr D_T(f,f)$.
The initial energy used below is $\mathscr E_0(f_0)=\|f_0\|_{ \tilde L_{p,\eta}^r(B_{r,1,W_0}^{2/r}) }$.

Let
\[
U(t)a(x,p)=a(x-t\widehat p,p)
\]
be the free transport group.

\begin{theorem}
\label{thm:rel-main}
Let $\gamma,r,\ell,m$ satisfy \eqref{equ:gamma-r-1} and
\eqref{equ:gamma-r-2}. There is a constant $\delta_*>0$, depending only
on these parameters and the angular bound, such that if $f_0\geq0$ almost
everywhere and $\mathscr E_0(f_0)\leq\delta_*$, then
\eqref{equ:Rela-Boltzmann} has a global nonnegative mild solution $f$ with
\[
 \mathscr E_T(f)+\mathscr D^{1/2}_T(f)
 \leq C\mathscr E_0(f_0),\qquad T\geq0.
\]
Its free-transport pullback $U(-t)f(t)$ is continuous in
$\widetilde L^r_{p,\eta}(B^{2/r}_{r,1,W_0})$ and has initial value $f_0$.
The solution is unique among mild solutions with this initial trace for
which $\mathscr E_T(f)$ and $\mathscr D_T(f)$ are finite on every finite
interval. Here \emph{mild} means
\[
 f(t)=U(t)f_0+\int_0^t U(t-s)Q(f,f)(s)\,ds,
\]
with the integral well defined in the weighted space after free-transport
pullback.
\end{theorem}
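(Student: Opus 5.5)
The plan is a contraction argument in the space defined by the two functionals. We set
\[
X_T=\{f:\ \mathscr E_T(f)+\mathscr D_T^{1/2}(f)<\infty\}
\]
and consider the map
\[
\Phi(f)(t)=U(t)f_0+\int_0^tU(t-s)Q(f,f)(s)\,ds.
\]
The proof then reduces to three a priori estimates.

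\textbf{Step 1: the linear estimate.} We would show
\[
\mathscr E_T(U(t)f_0)+\mathscr D_T^{1/2}(U(t)f_0)\le C\mathscr E_0(f_0).
\]
The energy part is immediate. The weight satisfies $W(t,x+t\widehat p,p)=W_0(x,p)$, and $\Delta_j$ acts only in $y$, so it commutes with transport.

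The interaction part is the fixed-direction div--curl (bilinear transport) lemma. We write $F_j=W_p\Delta_j f$ and $G_k=W_q\Delta_k g$, both of which solve free transport. Integrating $\|F_j(p)\|_{L^r_y}^r\|G_k(q)\|_{L^r_y}^r$ in $t$ and $\eta$, we change variables along the $\eta$-characteristics $\eta-t\widehat p_1$ and $\eta-t\widehat q_1$. The Jacobian is $|\widehat p_1-\widehat q_1|^{-1}$, and it is exactly cancelled by the factor $|\widehat p_1-\widehat q_1|^{1/r}$ raised to the power $r$. The $y$-norms are translation invariant, so they pass through. The result is $\mathscr D_T(f,g)\le C\mathscr E_0(f)\mathscr E_0(g)$.

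\textbf{Step 2: Duhamel terms.} For a source $h$ we aim for
\[
\mathscr E_T\Big(\int_0^tU(t-s)h\,ds\Big)+\mathscr D_T^{1/2}(\cdots)\le C\,\|h\|_{N_T},
\]
where
\[
\|h\|_{N_T}=\sum_j2^{2j/r}\big\|W\Delta_jh\big\|_{L^1_tL^r_{x,p}}.
\]
This follows by Minkowski in $s$ together with Step 1 applied to each slice.

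\textbf{Step 3: the bilinear collision estimate.} The core is
\[
\|Q^\pm(f,g)\|_{N_T}\le C\,\mathscr D_T(f,g),
\]
which we would prove in the following order.

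First, the frequency localization. Paraproduct in $y$, with $\Delta_j$ commuting with the $p,q$ integrals because the collision is local in $x$. Use Bernstein, $\|\Delta_j f\|_{L^\infty_y}\lesssim2^{2j/r}\|\Delta_j f\|_{L^r_y}$, which explains the critical index $2/r$. The sum of $2^{2j/r}$ over the output block is then controlled by $\sum_{j,k}2^{2(j+k)/r}$ via Hölder in $y$.

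Second, the weights. We transfer $W$ from $p$ to the colliding pair, using $\langle x-(1+t)\widehat p\rangle\lesssim\langle x-(1+t)\widehat q\rangle+(1+t)|\widehat p-\widehat q|$ and energy conservation for $(p^0)^m$.

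Third, the time integrability. In the $L^1_t$ norm we apply Hölder in $(t,\eta,p,q)$ with exponents $r,r'$. This leaves a factor $|\widehat p_1-\widehat q_1|^{-1/r}$ times the kernel, to be integrated in $L^{r'}$ against the weight $\langle\cdot\rangle^{-\ell}(p^0)^{-m}$. For the loss term this is direct. The soft-potential singularity $g^{\gamma}$ together with $|\widehat p_1-\widehat q_1|^{-r'/r}$ must be integrable, which is where $r>3/(\gamma+2)$ and $r>2/(-\gamma)$ enter. The conditions $\ell>1+3/r'$ and $m>3\ell/2+\cdots$ give the decay in $t$ and the momentum integrability.

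\textbf{Main obstacle.} The hard step is the gain term. There $f(p')g(q')$ is paired with $|\widehat p'_1-\widehat q'_1|^{1/r}$, while the measure is in $(p,q,\omega)$. One must pass to post-collisional variables, using the pre/post-collision change of variables with $dp\,dq\,d\omega$ and the invariance of $g$ and $\mathfrak s$. The remaining difficulty is to bound
\[
\int_{\S^2}|\widehat p'_1-\widehat q'_1|^{-r'/r}\,d\omega
\]
uniformly, with energy weights. I would first localize $\omega$ to spherical caps around the direction where $\widehat p'_1=\widehat q'_1$; this set is a great-circle-type curve given by the explicit formula for $p'$. I would then estimate the angular integral as
\[
|\widehat p'_1-\widehat q'_1|\gtrsim \frac{g}{\sqrt{\mathfrak s}\,E}\cdot\operatorname{dist}(\omega,\text{curve}),
\]
which is integrable because $r'/r<1$, and absorb the factor $E^{r'/r}$ with the $(p^0)^m$ weight. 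The choice of $\varepsilon$ in \eqref{equ:vare} serves as the room needed in these Hölder splittings.

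\textbf{Conclusion.} With these bounds we get
\[
\|\Phi(f)\|_{X_T}\le C\mathscr E_0(f_0)+C\|f\|_{X_T}^2,
\]
and similarly a Lipschitz bound for differences. For small $\delta_*$ this gives a unique fixed point in a ball of radius $2C\mathscr E_0$, uniformly in $T$. Continuity of $U(-t)f(t)$ follows from the $L^1_t$ integrability of the Duhamel integrand.

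Nonnegativity comes from the standard monotone iteration. Write
\[
\partial_tf+\widehat p\cdot\nabla_xf+f\,R(f)=Q^+(f,f),
\]
where $R(f)$ is the collision frequency, so that $Q^-(f,f)=f\,R(f)$. Iterate with $f^{n+1}$ solving this with $R(f^n)$ and $Q^+(f^n,f^n)$, starting from $f^0=0$. Each iterate is nonnegative, and the iteration converges in $X_T$ by the same estimates.

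Uniqueness in the larger class, meaning finite norms on every finite interval, follows by applying the difference estimate on short intervals where $\mathscr D$ is small, and continuing step by step.
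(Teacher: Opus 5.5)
\textbf{First gap: the contraction does not close in $X_T$.} Your architecture differs from the paper's. The paper runs the positivity-preserving scheme $\partial_tf^{n+1}+\widehat p\cdot\nabla_xf^{n+1}+Q^-(f^{n+1},f^n)=Q^+(f^n,f^n)$, which is your monotone iteration. It proves uniform bounds by induction on $\mathscr E_T(f^i)$, $\mathscr D_T(f^i)$ \emph{and} the mixed quantity $\mathscr D_T(f^i,f^{i-1})$, and then takes convergence and uniqueness from the classical argument.

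As stated, your contraction fails at the Lipschitz step. That step needs $\|Q(f-g,f)\|_{N_T}+\|Q(g,f-g)\|_{N_T}$, which Proposition~\ref{prop:non-esti} bounds by the \emph{mixed} functionals $\mathscr D_T(f-g,f)$ and $\mathscr D_T(g,f-g)$. These are not controlled by $\mathscr E_T+\mathscr D_T^{1/2}$ of the entries, for two reasons. First, the kernel $|\widehat p_1-\widehat q_1|$ is not positive definite, so $\mathscr D_T(a,b)\le\mathscr D_T(a)^{1/2}\mathscr D_T(b)^{1/2}$ fails: concentrate $a$ near $\widehat p_1=\tfrac12$ and $b$ near $\widehat q_1=-\tfrac12$ with narrow momentum spread. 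Second, without transport structure $\mathscr D_T(a,b)$ is not bounded by $\mathscr E_T(a)\mathscr E_T(b)$ either, since it contains $\int\|W_p\Delta_ja\|_{L^r_y}^r\|W_q\Delta_kb\|_{L^r_y}^r\,d\eta$ with no $L^\infty_\eta$ control. In particular $\mathscr E_T+\mathscr D_T^{1/2}$ is not subadditive, so $X_T$ is not a normed space for Banach's theorem.

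The paper handles exactly this by bounding $\mathscr D_T(f^{n+1},f^n)$ through the div--curl lemma applied to the two coupled transport equations. Within your framework, replace $X_T$ by the Duhamel norm $\|a\|_{S_T}=\inf\{\mathscr E_0(a_0)+\|h\|_{N_T}: a(t)=U(t)a_0+\int_0^tU(t-s)h(s)\,ds\}$. Apply your slice argument to pairs of free-transport pieces started at different times, translating to the later start with $W(t)U(t-s)=U(t-s)W(s)$. This gives $\mathscr E_T(a)\le\|a\|_{S_T}$ and $\mathscr D_T(a,b)\le\|a\|_{S_T}\|b\|_{S_T}$, so $\Phi$ contracts a small ball of $S_T$ uniformly in $T$. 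Mild solutions with finite $\mathscr E_T,\mathscr D_T$ lie in $S_T$ by Proposition~\ref{prop:non-esti}, so your short-interval uniqueness argument also goes through. With this fix, your superposition route is cleaner than the paper's energy-balance bound, which produces extra $\mathscr E_T^2$ and $\mathscr E_T^{2-1/r}\mathscr D_T^{1/r}$ terms.

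\textbf{Second gap: the gain term has no time decay in your plan.} You treat the angular singularity as the only remaining difficulty and aim at a time-uniform bound. But the left side of Proposition~\ref{prop:non-esti} is $L^1_t$ while $\mathscr D_T$ is $L^r_t$, so H\"older in $t$ needs $I_{\mathrm{gain}}^{r'}\lesssim(1+t)^{-(1+\varepsilon)}$; a uniform bound only gives a factor $T^{1/r'}$. In the paper this decay is the main geometric step:
\begin{itemize}
\item After $S_p/(S_uS_v)\lesssim u^0(S_u^{-1}+S_v^{-1})$ (Lemma~\ref{lem:Weighted -Estimate}), the sets $\{|x-(1+t)\widehat u|\le\rho\}$ and $\{|x-(1+t)\widehat v|\le\rho\}$ lie in antipodal caps of radius $\lesssim E^2\rho/((1+t)g\sqrt{\mathfrak s})$ (Lemma~\ref{Set subset}).
\item The singular cap bound with $\alpha=-r'/r$ then gives $(1+t)^{-(3-r')}$.
\item Interpolating with the energy-cap bound yields $(1+t)^{-(1+\varepsilon)}$, at the cost of $(g\sqrt{\mathfrak s})^{-(1+\varepsilon)}$ and powers of $E$ and $u^0_{\min}$ (Proposition~\ref{key estimate-1}).
\item For $\gamma\in[0,1]$ this cost is too high at large $g$, so Proposition~\ref{key estimate-gamma>0} keeps $|\widehat u_1-\widehat v_1|$ exactly.
\end{itemize}
This is also where the hypotheses on $r$ originate, in the gain term rather than the loss term as you suggest. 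The condition $r>3/(\gamma+2)$ comes from $J_1$, where the cap loss $g^{-(1+\varepsilon)}$ meets $g^{(\gamma-1)r'+1}$ near $g=0$, and $r>2/(-\gamma)$ comes from $K_1$ at large momenta. The loss term only needs $r>2$ and $(1-\gamma)r'<4$.

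Finally, your lower bound $|\widehat p'_1-\widehat q'_1|\gtrsim g\operatorname{dist}(\omega,\cdot)/(\sqrt{\mathfrak s}E)$ fails when $E\gg\mathfrak s$. The exact identity is $|\widehat p'_1-\widehat q'_1|=g|L\cdot\omega|/(2p'^0q'^0)$; for $P\parallel e_1$ near $P\cdot\omega=0$ the prefactor is $2g\sqrt{\mathfrak s}/E^2$. The loss $(p'^0q'^0)^{r'/r}$ must therefore be absorbed by the outgoing energy weights in $W_p/(W_{p'}W_{q'})$, not by $(p^0)^m$.
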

We make two remarks on Theorem \ref{thm:rel-main}.
\begin{remark}
The criticality in this theorem refers to the Besov embedding
$$
B^{2/r}_{r,1}(\mathbb R_y^2)
\hookrightarrow L^\infty(\mathbb R_y^2).
$$
The initial-data norm requires no derivatives in $\eta$ or $p$ and permits unbounded data. For example, let $a,b,c$ be nonnegative smooth compactly supported functions of $\eta,y,p$, respectively, with $a=1$ near zero and $b,c$ nonzero. For $0<\alpha<1/r$ and $\lambda>0$, set
\[
 f_0(\eta,y,p)=\lambda|\eta|^{-\alpha}a(\eta)b(y)c(p).
\]
Since the projections act only on $b$ and $W_0\lesssim\langle y\rangle^\ell$ on the supports of $a$ and $c$,
\[
 \mathscr E_0(f_0)
 \lesssim\lambda\||\eta|^{-\alpha}a\|_{L^r_\eta}\|c\|_{L^r_p}
 \sum_{j\geq-1}2^{2j/r}
 \|\langle y\rangle^\ell\Delta_j b\|_{L^r_y}
 \leq C_\alpha\lambda,
 \qquad \|f_0\|_{L^\infty_{x,p}}=\infty.
\]
Thus smallness in the theorem does not require uniform smallness of the density. Conversely, a weighted supremum bound alone does not imply the transverse Besov regularity required here.

For each fixed $\gamma\in(-2,1]$, the parameter conditions yield a corresponding range of admissible exponents $r$. The endpoint $\gamma=-2$ and estimates uniform as $\gamma\downarrow-2$ are not included.
\end{remark}

\begin{remark}
Glassey \cite{Glassey2006Vacuum} constructed global mild solutions to the relativistic Boltzmann equation in a weighted supremum framework. Here, we obtain global control under a smallness condition in an integral norm that allows unbounded phase-space densities. Our framework combines polynomial characteristic weights with critical Besov regularity in the two transverse spatial variables and weighted integrability in the longitudinal and momentum variables.
\end{remark}

\subsection{Relation to the classical problem}

For the classical Boltzmann equation, \cite{HLPZ2026} establishes near-vacuum well-posedness in weighted anisotropic Besov spaces using a bilinear collision estimate and a div--curl lemma. The present work uses the same transverse regularity principle. Its additional task is to control the nonlinear momentum-to-velocity map and the outgoing relativistic collision geometry. In a fixed longitudinal direction, $|\widehat p_1-\widehat q_1|$ may vanish for distinct momenta; the gain estimate must control the corresponding singularity for $p',q'$ as well.

The principal new estimate links relativistic angular integration to the fixed-direction transport interaction functional. It closes the global argument in the anisotropic integral space without adding longitudinal or momentum derivatives. The critical transverse exponent is the endpoint for the Besov embedding stated above; it is not asserted to be an optimal regularity threshold for the full equation.

\subsection{Ideas and strategies}

The proof consists of two parts. The collision estimates capture the relativistic features of the equation, while the transport estimates and iteration scheme follow the anisotropic framework of \cite{HLPZ2026}. This separation clarifies precisely where the geometry of the present equation \eqref{equ:Rela-Boltzmann} enters the analysis.

\medskip
\noindent\textit{Localization compatible with transport.}
The characteristic weight satisfies
$$
(\partial_t+\widehat p\cdot\nabla_x)W=0.
$$
Thus, the spatial weight is transported along free trajectories without generating any error terms, while its energy factor provides control at large momenta. Weighted frequency projections and Bernstein estimates ensure that this localization is compatible with transverse product estimates at regularity index $2/r$.

\medskip
\noindent\textit{The outgoing angular degeneracy.}
The loss estimate combines characteristic localization with a momentum-to-velocity change of variables. The gain term requires additional control of outgoing energies and outgoing longitudinal velocities. With
$$
L=Ee_1-\frac{P_1P}{E+\sqrt{\mathfrak s}},
\qquad
e_1=(1,0,0),
$$
the center-of-momentum representation yields
$$
\moc{\widehat  p'_1 - \widehat q'_1}
=\frac{g|L\cdot\omega|}{2p'^0q'^0},
\qquad
|L|^2=E^2-P_1^2\geq\mathfrak s.
$$
For $g>0$, the first identity locates the zero of the outgoing directional factor on a great circle of the collision sphere. The second supplies a lower bound for $|L|$, leaving the outgoing energy factors explicit. Spherical-cap estimates handle the resulting angular singularity together with the sets on which outgoing energy or characteristic localization is small. Interpolation and a decomposition of the incoming momentum regimes yield the time-integrable residual gain bound. This is the main geometric part of the proof.

\medskip
\noindent\textit{From collision geometry to the source bound.}
The residual gain and loss estimates feed into H\"older's inequality in the collision variables. For the gain term, the exchange of incoming and outgoing variables uses the invariant mass-shell measure. The remaining transverse products are treated by Bony's decomposition and weighted Bernstein inequalities, giving Proposition~\ref{prop:non-esti}. Thus the nonlinear source is bounded by the same fixed-direction interaction functional that appears in the transport estimate.


\subsection{Related works}

For the classical near-vacuum problem, Illner and Shinbrot \cite{is-84} constructed global solutions for a sufficiently rare gas, and Ha, Yamazaki, and Yun \cite{hyy-08} proved uniform $L^p$ stability. Ars\'enio \cite{ad-11} used transport dispersion for small-data existence under restrictions on the collision kernel. He and Jiang developed this approach in \cite{hj-17} and supplied the required loss-term estimates in \cite{hj-23}, obtaining global existence and scattering for three-dimensional cutoff soft potentials. Chen, Denlinger, and Pavlovi\'c \cite{cdp-21} treated the two-dimensional constant-kernel equation by bilinear spacetime estimates. Further gain-term estimates and small-data applications were obtained by He, Jiang, Kuo, and Liang \cite{hjkl-24}.

Vacuum problems with external forces and self-consistent fields have also been studied extensively. Guo \cite{guo-vpb-01} constructed global classical solutions to the Vlasov--Poisson--Boltzmann system near vacuum. Duan, Yang, and Zhu proved global existence for the Boltzmann equation with external forces \cite{dyz-05} and for the forced equation and Vlasov--Poisson--Boltzmann system in infinite vacuum \cite{dyz-06}. For the latter system, Chae, Ha, and Hwang studied large-time behavior \cite{chh-jde}, while Duan, Zhang, and Zhu established $L^1$ stability \cite{dzz-06}. Near-vacuum results for the Vlasov--Yukawa--Boltzmann system include asymptotic dynamics in the cutoff case \cite{ch-15} and stability without angular cutoff \cite{hxz-25}.

Low regularity is a central issue in this theory. Chen, Shen, and Zhang \cite{csz-24} identified the local Sobolev well/ill-posedness threshold for cutoff soft potentials; their global existence and scattering result \cite{csz-23} uses smallness in a scaling-critical weighted norm. Chaturvedi \cite{cs-21} proved vacuum stability for non-cutoff moderately soft potentials. Related vacuum results for the Landau and quantum Boltzmann equations were obtained by Luk \cite{jj-19} and Ouyang and Wu \cite{ow-22}, respectively. The anisotropic Besov result \cite{HLPZ2026} places regularity in the transverse variables and uses a div--curl interaction estimate. These works motivate an integral theory that separates transverse regularity from longitudinal and momentum integrability in the relativistic setting. Low-regularity theories for kinetic equations have also been developed near Maxwellian equilibria; see, for instance, \cite{dlss-21,dlx-16}.


For the relativistic equation, Glassey \cite{Glassey2006Vacuum} established global near-vacuum mild solutions with weighted $L^\infty$ control, without Sobolev differentiability assumptions. Strain \cite{Strain2010Vacuum} obtained global solutions uniformly in the speed of light and a Newtonian limit on finite time intervals. Ha, Jeong, and Strain \cite{HaJeongStrain2013} proved uniform $L^1$ stability for small, exponentially decaying data using a collision potential. Takou and Ciake \cite{tc-18} constructed global classical near-vacuum solutions for hard potentials in an expanding Robertson--Walker space-time. For the coupled relativistic Vlasov--Maxwell--Boltzmann system, Cao and Li \cite{CaoLi2026Vacuum} established vacuum stability by vector-field estimates. The present result concerns the field-free equation with a bounded angular factor and polynomial characteristic weights. It develops global control at critical transverse regularity through explicit estimates for the outgoing directional singularity.


\subsection{Other notations}
Throughout the paper, $C$ denotes a positive constant whose value may change from line to line. Unless otherwise specified locally, $i,j,k$ are dyadic indices. The integrability exponent and its conjugate are $r$ and $r'$, as in Section~\ref{subsec:rel-main-results}. For nonnegative quantities $A$ and $B$, we write $A\lesssim B$ if $A\leq CB$, and $A\thicksim B$ if both $A\lesssim B$ and $B\lesssim A$. We use $\langle z\rangle=(1+|z|^2)^{1/2}$ for scalars and vectors in any Euclidean dimension.

The blocks $\Delta_j$ always act in the transverse variables $y$. For a momentum $a\in\mathbb R^3$, we use the shorthand
\[
 W_a=W(t,x,a),\qquad W_0(a)=W_0(x,a),\qquad
 S_a=\langle x-(1+t)\widehat a\rangle.
\]
In particular, $\widehat p_1$ and $\widehat p'_1$ denote the first components of the incoming and outgoing velocities, respectively.

\subsection{Organization of the paper}

Section~\ref{sec:weighted-spaces} introduces the weighted transverse Besov spaces. Section~\ref{sec:decay-estimates} establishes the gain and loss decay bounds, which are used in Section~\ref{sec:collision-bound} to prove the nonlinear source estimate. Sections~\ref{sec:interaction-estimates} and~\ref{sec:global-existence} give the interaction estimate and global construction. The appendix records the collision identities and analytic tools.

\section{Littlewood--Paley decomposition and weighted spaces}\label{sec:weighted-spaces}

         Only the two transverse variables enter the frequency decomposition. This section introduces the decomposition and records the Besov and Chemin--Lerner norms used in the collision estimate. The underlying Littlewood--Paley theory is standard; see \cite{BCD}.

\subsection{Decomposition in the variables $y$}

The Fourier transform is taken only in $y$; the variables $(t,\eta,p)$ remain parameters. For $f(t,\eta,\cdot,p)\in\mathcal S(\mathbb R^2_y)$, set
        $$ \widehat{f}(t,\eta,z,p):=\int_{\mathbb{R}^2} e^{-iy\cdot z}f(t,\eta,y,p)dy.$$

Choose a smooth dyadic partition $(\varphi,\chi)$ taking values in $[0,1]$, with $\operatorname{supp}\varphi\subset\{z:3/4\leq|z|\leq8/3\}$ and $\operatorname{supp}\chi\subset\{z:|z|\leq4/3\}$, such that
        $$ \chi(z)+\sum_{j\geq 0} \varphi(2^{-j}z)=1, \quad\quad\forall z\in \mathbb{R}^2,$$
        and
        $$ \sum_{j\in \mathbb{Z}}\varphi(2^{-j}z) =1, \quad\quad \forall z\in\mathbb{R}^{2}- \{0\}.$$
        The associated nonhomogeneous blocks of $f$ are
\[
 \Delta_{-1}f(y)=\widetilde\psi*f(y),\qquad
 \Delta_jf(y)=2^{2j}\int_{\mathbb R^2}\psi(2^jz)f(y-z)\,dz,
 \quad j\geq0,
\]
where $\widetilde\psi=\mathcal F^{-1}\chi$, $\psi=\mathcal F^{-1}\varphi$,
and the convolution is in $y$. Set $\Delta_j=0$ for $j\leq-2$ and
\[
 S_jf=\sum_{-1\leq k\leq j-1}\Delta_kf\quad(j\geq0),
 \qquad S_jf=0\quad(j\leq-1).
\]
Thus $S_0=\Delta_{-1}$, and the kernel representation is
\[
 S_jf=\chi(2^{-j}D)f
 =2^{2j}\int_{\mathbb R^2}\widetilde\psi(2^jz)f(y-z)\,dz,
 \qquad j\geq0.
\]

        These operators resolve $f$ through the nonhomogeneous decomposition
        $$f = \sum_{j \geq -1} \Delta_j f.$$ The blocks $\Delta_j$, $j\geq -1,$ determine the function spaces below.

\subsection{Mixed norms and the endpoint Besov embedding}

      \begin{definition}
Let $1 \leq r,q,\bar{q} \leq \infty$ and $s \in \mathbb{R}$.

The nonhomogeneous Besov space is defined by
\[
B_{r,q}^s := \left\{ g \in \mathcal{S}'(\mathbb{R}^2_y) :
g = \sum_{j \ge -1} \Delta_j g,\ \|g\|_{B_{r,q}^s} < \infty \right\},
\]
where $\mathcal{S}'(\mathbb{R}^2_y)$ denotes the space of Schwartz distributions on $\mathbb{R}^2_y$.

The nonhomogeneous anisotropic Besov space is defined by
\[
\tilde{L}_{p,\eta}^r(B_{r,q}^s)
:= \left\{ f \in \mathcal{S}'(\mathbb{R}^3_p \times \mathbb{R}_\eta \times \mathbb{R}^2_y) :
f = \sum_{j \ge -1} \Delta_j f,\
\|f\|_{\tilde{L}_{p,\eta}^r(B_{r,q}^s)} < \infty \right\}.
\]

We also define the time-dependent Chemin--Lerner type space
\[
\tilde{L}_T^{\bar{q}} \tilde{L}_{p,\eta}^r(B_{r,q}^s)
:= \left\{ f \in \mathcal{S}'([0,T] \times \mathbb{R}^3_p \times \mathbb{R}_\eta \times \mathbb{R}^2_y) :
f = \sum_{j \ge -1} \Delta_j f,\
\|f\|_{\tilde{L}_T^{\bar{q}} \tilde{L}_{p,\eta}^r(B_{r,q}^s)} < \infty \right\}.
\]

The corresponding norms are given by
\[
\|g\|_{B_{r,q}^s}
= \left( \sum_{j \ge -1} \big( 2^{js} \|\Delta_j g\|_{L_y^r} \big)^q \right)^{\frac{1}{q}},
\]
\[
\|f\|_{\tilde{L}_{p,\eta}^r(B_{r,q}^s)}
= \left( \sum_{j \ge -1} \big( 2^{js} \|\Delta_j f\|_{L_{p,\eta,y}^r} \big)^q \right)^{\frac{1}{q}},
\]
and
\[
\|f\|_{\tilde{L}_T^{\bar{q}} \tilde{L}_{p,\eta}^r(B_{r,q}^s)}
= \left( \sum_{j \ge -1} \big( 2^{js} \|\Delta_j f\|_{L_t^{\bar{q}} L_{p,\eta,y}^r} \big)^q \right)^{\frac{1}{q}}, \ t\in[0,T],
\]
with the usual modifications when an exponent is $\infty$.

\medskip

Moreover, for the time-dependent weight
\[
W(t,x,p) = \langle x-(t+1)\widehat{p} \rangle^\ell(p^0)^m,
\]
we define the weighted anisotropic Besov norm by
\[
\|f\|_{\tilde{L}_T^{\bar{q}} \tilde{L}_{p,\eta}^r(B_{r,q,W}^s)}
= \left( \sum_{j \ge -1} \big( 2^{js} \| W(t,x,p)\Delta_j f \|_{L_t^{\bar{q}} L_{p,\eta,y}^r} \big)^q \right)^{\frac{1}{q}},\ t\in[0,T],
\]
again with the usual modifications when an exponent is $\infty$. Throughout the paper, we use $L^{r,W}$ to denote the weighted $L^r$ space.

\medskip

In particular, the case $q=1$ and $s=2/r$ (with $1 \le r < \infty$) plays a central role in this paper. We will mainly work in the spaces
\[
\tilde{L}_{p,\eta}^r(B_{r,1,W_0}^{2/r})
\quad \text{and} \quad
\tilde{L}_T^\infty \tilde{L}_{p,\eta}^r(B_{r,1,W}^{2/r}).
\]
\end{definition}

The product estimate needed later rests on an endpoint Besov embedding. For fixed $(t,p,\eta)$, $$ B_{r,1,W}^{2/r}\hookrightarrow L_{y}^{\infty,W},$$
The form used here is
\begin{equation}\label{Embedding inequailty}
          \norm{W_pf}_{L_y^{\infty}}\leq C \sum_{j\geq
        -1} 2^{\frac{2j}{r}}\norm{W_p\Delta_j f}_{L_{y}^r}.
        \end{equation}
        The proof is included in Section~\ref{sec-app}. 

\section{Decay estimates for collision factors}\label{sec:decay-estimates}
This section establishes time-decay bounds for the weighted factors
arising in the loss and gain terms of the collision operator. These bounds
will be used in the nonlinear estimate of Section~\ref{sec:collision-bound}.
Throughout, the parameters $\gamma,r,\ell,m,\varepsilon$ satisfy
\eqref{equ:gamma-r-1}, \eqref{equ:gamma-r-2}, and \eqref{equ:vare}.

For $t\geq0$ and $x,p\in\mathbb R^3$, define the nonnegative loss factor by
\[
 I_{\mathrm{loss}}(t,x,p)
 :=\left(
 \int_{\mathbb R^3\times\mathbb S^2}
 \frac{B(p,q,\omega)^{r'}}
 {W_q^{r'}|\widehat p_1-\widehat q_1|^{r'/r}}
 \,dq\,d\omega
 \right)^{1/r'}.
\]
The nonnegative gain factor is defined by
\begin{equation}\label{I-gain}
 I_{\mathrm{gain}}(t,x,p):=
\begin{cases}
\displaystyle
\left(
\int_{\mathbb{R}^3\times\mathbb S^2}
\dfrac{B(p,q,\omega)W_p^{r'}}
{W_{p'}^{r'}W_{q'}^{r'}
\moc{\widehat p'_1-\widehat q'_1}^{\frac{r'}r}}
\,dq\,d\omega
\right)^{1/r'},
& \mbox{if }\gamma\in[0,1],
\\[2ex]
\displaystyle
\left(
\int_{\mathbb{R}^3\times\mathbb S^2}
\dfrac{B^{r'}(p,q,\omega)W_p^{r'}(p^0q^0)^{\frac{r'}r}}
{W_{p'}^{r'}W_{q'}^{r'}
\moc{\widehat p'_1-\widehat q'_1}^{\frac{r'}r}}
\,dq\,d\omega
\right)^{1/r'},
& \mbox{if }\gamma\in(-2,0).
\end{cases}
\end{equation}
Here $W_a=W(t,x,a)$, and $p',q'$ are the post-collisional momenta.
The loss bound follows from a change from momentum to relativistic
velocity. The gain bound requires angular estimates for the outgoing
velocities and weights.

\begin{proposition}[Decay of the loss factor]\label{prop:Estimate for I-loss}
Assume that $\gamma,r$ satisfy \eqref{equ:gamma-r-1}. Then
\[
 I_{\mathrm{loss}}^{r'}(t,x,p)
 \lesssim(1+t)^{-(3+\min\{\gamma,0\}r'-r'/r)},
\]
uniformly for $t\geq0$ and $x,p\in\mathbb R^3$.
\end{proposition}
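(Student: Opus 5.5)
The plan is to integrate out $\omega$, change the variable from momentum to relativistic velocity $v=\widehat q$, and use the characteristic weight $\langle x-(1+t)v\rangle^{-\ell r'}$ to confine $v$ to a ball of radius $\sim(1+t)^{-1}$. The rescaling $w=(1+t)v$ then produces the power of $(1+t)$ directly. This is the change of variables announced before the statement of Proposition~\ref{prop:Estimate for I-loss}.

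\textbf{Step 1: reduction to a $v$-integral.} Since $0\le\sigma_0\le C_0$ and $|\mathbb S^2|<\infty$, the $\omega$-integral contributes only a constant, so
\[
I_{\mathrm{loss}}^{r'}\lesssim\int_{\mathbb R^3}\frac{g^{\gamma r'}\mathfrak s^{r'/2}}{(p^0q^0)^{r'}}\,
\frac{\langle x-(1+t)\widehat q\rangle^{-\ell r'}(q^0)^{-mr'}}{|\widehat p_1-\widehat q_1|^{r'/r}}\,dq .
\]
Next substitute $v=\widehat q\in\{|v|<1\}$. The Jacobian of $q\mapsto q/q^0$ is $(q^0)^{-5}$, so $dq=(q^0)^5\,dv$. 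For the $g$-dependent factors I will use two elementary bounds:
\[
g^2\le 4p^0q^0,\qquad \mathfrak s\lesssim p^0q^0 .
\]
I will also use the lower bound $g\ge\sqrt{p^0q^0}\,|\widehat p-\widehat q|$. This follows from the identity
\[
g^2-p^0q^0|\widehat p-\widehat q|^2=p^0q^0\Bigl(\tfrac1{p^0}-\tfrac1{q^0}\Bigr)^2\ge0 .
\]

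\textbf{Step 2: collecting powers of $p^0$ and $q^0$.} In the hard case $0\le\gamma\le1$, the resulting power of $p^0$ is $(\gamma+1)r'/2-r'\le0$. The power of $q^0$ is $5+(\gamma+1)r'/2-r'-mr'$, which is negative because $m$ is large by \eqref{equ:gamma-r-2}. In the soft case $-2<\gamma<0$, I bound
\[
g^{\gamma r'}\le(p^0q^0)^{\gamma r'/2}|\widehat p-v|^{\gamma r'} .
\]
Again all energy powers are bounded, using the large $m$ for $q^0$ and $\gamma<0$ for $p^0$. In both cases this reduces matters to the estimate
\[
J:=\int_{\mathbb R^3}\langle x-(1+t)v\rangle^{-\ell r'}\,|\widehat p-v|^{\min\{\gamma,0\}r'}\,|\widehat p_1-v_1|^{-r'/r}\,dv .
\]

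\textbf{Step 3: rescaling.} Set $w=(1+t)v$, $a=(1+t)\widehat p$ and $\kappa=-\min\{\gamma,0\}r'$. Then
\[
J=(1+t)^{-3-\min\{\gamma,0\}r'+r'/r}\int_{\mathbb R^3}\langle x-w\rangle^{-\ell r'}|a-w|^{-\kappa}|a_1-w_1|^{-r'/r}\,dw ,
\]
which is exactly the claimed rate. It therefore remains to show that the last integral is bounded uniformly in $x$ and $a$.

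\textbf{Main obstacle: uniform boundedness of the model integral.} At infinity, integrability comes from $\ell r'>r'+3$, which is guaranteed by \eqref{equ:gamma-r-2}. The one-dimensional singularity is harmless because $r'/r=r'-1<1$, using $r>2$. The delicate point is the combined singularity at $w=a$: the radial factor $|a-w|^{-\kappa}$ sits on top of the planar factor $|a_1-w_1|^{-r'/r}$. I would split the integral into $|w-a|\le1$ and $|w-a|>1$.

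On $|w-a|\le1$, integrate first over the transverse variables $(w_2,w_3)$ at fixed $w_1$. This gives a bound by $|a_1-w_1|^{2-\kappa}$ when $\kappa>2$, and a bounded or logarithmic quantity when $\kappa\le2$. Integrability in $w_1$ then requires
\[
\kappa+\frac{r'}r<3,\qquad\text{that is,}\qquad 3+\gamma r'-(r'-1)>0 .
\]
I must check that this follows from $r>3/(\gamma+2)$ in \eqref{equ:gamma-r-1}, perhaps using the room provided by $\varepsilon$ in \eqref{equ:vare}. This index bookkeeping is where I expect the real care to lie.

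On $|w-a|>1$, the radial factor is bounded by $1$. The integral then reduces to the case already handled, with $\langle x-w\rangle^{-\ell r'}$ providing decay and the one-dimensional singularity integrable. This yields the bound uniformly in $t$, $x$ and $p$.
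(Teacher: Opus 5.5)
Your proposal is correct and follows essentially the paper's route: the change of variables to velocity with $dq=(q^0)^5\,dv$, the rescaling by $1+t$ around $\widehat p$, and the near/far split of the singular model integral; the only cosmetic difference is that you bound $g^{\gamma r'}$ via $g\geq\sqrt{p^0q^0}\,|\widehat p-\widehat q|$ rather than through $B\lesssim 1+|p-q|^{\gamma}$ and $|\widehat p-\widehat q|\lesssim|p-q|$, which gives a single power of $1+t$ instead of two. The index check you left open needs no $\varepsilon$: $r>3/(\gamma+2)$ is equivalent to $(1-\gamma)r'<3$, so $\kappa+r'/r=(1-\gamma)r'-1<2$, and $r>2$ holds in the soft range because $\max\{3/(\gamma+2),2/(-\gamma)\}\geq 5/2$.
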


    \begin{proof} We first consider $\gamma\in(-2,0)$.

The kernel bound \eqref{formu:estimate for B} gives
        $$ I_{\mathrm{loss}}^{r'} \lesssim \int_{\mathbb{R}^3 \times \mathbb{S}^2} \dfrac{ 1+\moc{p-q}^{\gamma r'}}{W_{q}^{r'}\moc{\widehat{p}_1 - \widehat{q}_1}^{\frac{r'}{r}}} dqd\omega. $$
Introduce the change of variables
$b=\widehat q$, so that $\moc b<1$ and
$$ D_q b = \dfrac{I}{q^0} - \dfrac{q\otimes q}{(q^0)^3},$$ whose eigenvalues are $ (q^0)^{-1},(q^0)^{-1},(q^0)^{-3},$ and hence $dq = (q^0)^5db.$ Since the map $q\mapsto\widehat q$ is globally Lipschitz, we have
$\moc{\widehat p-\widehat q}\lesssim\moc{p-q}$.
The weight controls the Jacobian when $mr'\geq 5$, so
        \begin{align*}
       \notag  I_{\mathrm{loss}}^{r'}  &\lesssim \int_{\{|b|< 1\} \times \mathbb{S}^2} \dfrac{(1+\moc{p-q}^{\gamma r'})(q^0)^5}{(q^0)^{mr'}\langle x-(t+1)b\rangle^{\ell r'}\moc{\widehat{p}_1 - \widehat{q}_1}^{\frac{r'}{r}}} dbd\omega  \\
           &\lesssim \int_{\{|b|< 1\} \times \mathbb{S}^2} \dfrac{(1+\moc{\widehat{p}-\widehat{q}}^{\gamma r'})(q^0)^{5-mr'}}{\langle x-(t+1)b\rangle^{\ell r'}\moc{\widehat{p}_1 - b_1}^{\frac{r'}{r}}} dbd\omega  \\
        \notag   &\lesssim \int_{\mathbb{R}^3} \dfrac{(1+\moc{\widehat{p}- b }^{\gamma r'})}{\langle x-(t+1)b\rangle^{\ell r'}\moc{\widehat{p}_1 - b_1}^{\frac{r'}{r}}} db .
        \end{align*}
Next, rescale by $z= (1+t)(b-\widehat{p})$ and write $S=x - (1+t)\widehat{p}$. We therefore obtain
        \begin{align*}
       \notag  I_{\mathrm{loss}}^{r'}   &\lesssim  \int_{\mathbb{R}^3}  \dfrac{(1+t)^{-3}(1+(1+t)^{-\gamma r'}\moc{z}^{\gamma r'})}{(1+t)^{-\frac{r'}{r}}\moc{z_1}^{\frac{r'}{r} }\langle S - z\rangle^{\ell r'}} dz\\
           &\lesssim (1+t)^{-3+\frac{r'}{r}} \int_{\mathbb{R}^3} \dfrac{dz}{\moc{z_1}^{\frac{r'}{r}}\langle S-z\rangle^{\ell r'}  } + (1+t)^{-3-\gamma r' + \frac{r'}{r}}\int_{\mathbb{R}^3} \dfrac{\moc{z}^{\gamma r'}dz}{\moc{z_1}^{\frac{r'}{r}}\langle S-z\rangle^{\ell r'} }\\
         \notag  &:= (1+t)^{-3+\frac{r'}{r}}I_1 + (1+t)^{-3-\gamma r' + \frac{r'}{r}}I_2\lesssim  (1+t)^{-3-\min\{\gamma,0\} r' + \frac{r'}{r}}(I_1 + I_2).
        \end{align*}
For $I_1,$ integrating first with respect to $y=(z_2,z_3)$,
we obtain
        \begin{align*}
        \notag   I_1&= \int_{\mathbb{R}}\dfrac{dz_1}{\moc{z_1}^{\frac{r'}{r}}}\int_{\mathbb{R}^2} \dfrac{dy}{\left( 1+ \moc{S_1 - z_1}^2 + \moc{S_y - y}^2 \right)^{\frac{\ell r'}{2}}}\\
            &= \int_{\mathbb{R}}\dfrac{dz_1}{\moc{z_1}^{\frac{r'}{r}}}\int_{[0,\infty)\times [0,2\pi]} \dfrac{\rho d\rho d\theta}{\left( 1+ \moc{S_1 - z_1}^2 +\rho^2 \right)^{\frac{\ell r'}{2}}}
       \lesssim\int_{\mathbb{R}} \dfrac{dz_1}{\moc{z_1}^{\frac{r'}{r}}\langle S_1 - z_1 \rangle^{\ell r' - 2}}
        \lesssim 1,
        \end{align*}
the final bound follows from Lemma \ref{Integral Estimate Lemma}.
For $I_2,$ separate the neighborhood of the singularity from its complement:
        \begin{align*}
      I_2&=\int_{\mathbb{R}^3} \dfrac{\moc{z}^{\gamma r'}dz}{\moc{z_1}^{\frac{r'}{r}}\langle S-z\rangle^{\ell r'}}
             =\left(\int_{\moc{z}\leq 1} + \int_{\moc{z}\geq 1}\right)\dfrac{\moc{z}^{\gamma r'}dz}{\moc{z_1}^{\frac{r'}{r}}\langle S-z\rangle^{\ell r'}}
             := I_{2,1} + I_{2,2}.
        \end{align*}
On the complement, $I_{2,2} \leq  \int_{\mathbb{R}^3}\dfrac{dz}{\moc{z_1}^{\frac{r'}{r}}\langle S-z\rangle^{\ell r'}} :=I_1 \lesssim 1,$
whereas near the singularity,
        \begin{align*}
        I_{2,1}\lesssim \int_{\moc{z}\leq 1} \dfrac{dz}{\moc{z_1}^{\frac{r'}{r}}\moc{z}^{-\gamma r'}}
                   \lesssim& \int_{\moc{z_1}\leq 1}\int_{\moc{y}\leq 1} \dfrac{dz_1dy}{\moc{z_1}^{\frac{r'}{r}}(|y|^2 + |z_1|^2)^{\frac{-\gamma r'}{2}}}\\
                   \lesssim& \int_{\moc{z_1}\leq 1} \left( \dfrac{1}{|z_1|^{\frac{r'}{r} -\gamma r'-2}} + \dfrac{1}{|z_1|^{\frac{r'}{r}}} \right)dz_1 \lesssim 1,
        \end{align*}
both rescaled integrals are therefore bounded.

We next consider $\gamma\in[0,1]$.

By \eqref{formu:estimate for B} and the same calculation as for $\gamma\in(-2,0)$,
using $mr'\geq 5$, we obtain
$$
I_{\mathrm{loss}}^{r'}
\lesssim
\int_{\mathbb{R}^3}
\dfrac{dq}{
\moc{\widehat{p}_1-\widehat{q}_1}^{\frac{r'}{r}}
\langle x-(1+t)\widehat{q}\rangle^{\ell r'}
(q^0)^{mr'}
}\lesssim
(1+t)^{-3+\frac{r'}{r}}I_1
\lesssim
(1+t)^{-3-\min\{\gamma,0\} +\frac{r'}{r}},
$$
where the final inequality follows from the preceding bound for $I_1$.
This completes the proof.
    \end{proof}


For $c\in\mathbb{S}^2$ and $\rho\geq0$, we denote the spherical cap
centered at $c$ with radius $\rho$ by
$$
\mathcal{M}(c,\rho)
=
\{\omega\in\mathbb{S}^2:|\omega-c|\leq\rho\}.
$$
We also write
$$
\mathcal{M}(\pm c,\rho)
=
\mathcal{M}(c,\rho)\cup\mathcal{M}(-c,\rho).
$$
We first estimate the singular angular integral over a spherical cap.
We then combine this estimate with the characteristic weights to obtain
the time decay of $I_{\mathrm{gain}}$.

The required angular bound is uniform in both the cap center and the singular direction.
    \begin{lemma}\label{lem:Local angular integration lemma}
Fix $e,c\in \mathbb{S}^2 $ and $\alpha\in (-1,0].$ For all $\rho\geq0,$   then
      \begin{equation}\label{Estimate for Angular integration}
        \int_{\mathcal{M}(c,\rho)} \moc{e\cdot \omega}^{\alpha} d\omega\lesssim \rho^{2+\alpha},
      \end{equation}
where the implicit constant is uniform in $e,c$ and $\rho$.
    \end{lemma}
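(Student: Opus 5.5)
We reduce the estimate to the distribution function of $e\cdot\omega$ restricted to the cap. First, two trivial cases. If $\rho\geq2$, then $\mathcal M(c,\rho)=\S^2$. In that case we compute directly in coordinates with polar axis $e$:
\[
\int_{\S^2}|e\cdot\omega|^\alpha d\omega=2\pi\int_{-1}^1|u|^\alpha du=\frac{4\pi}{1+\alpha}.
\]
This is finite since $\alpha>-1$, and it is $\lesssim\rho^{2+\alpha}$ because $\rho\geq2$. If $\rho=0$, the cap has measure zero and there is nothing to prove. So we assume $0<\rho<2$. Then the cap has area $\lesssim\rho^2$ and diameter $\le 2\rho$.

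The key observation is a layer-cake bound. For $s>0$, set
\[
A_s=\{\omega\in\mathcal M(c,\rho):|e\cdot\omega|<s\}.
\]
This is the intersection of the cap with a band of width $2s$ around the great circle $e^\perp$. We claim
\[
|A_s|\lesssim \min\{\rho^2,\ \rho s\}.
\]
To see this, use coordinates $\omega=(\sqrt{1-u^2}\cos\phi,\sqrt{1-u^2}\sin\phi,u)$ with $u=e\cdot\omega$, so that $d\omega=du\,d\phi$. This coordinate map is area preserving (Archimedes). The band is $\{|u|<s\}$. On each level circle $\{u=\text{const}\}$, the cap meets an arc of chord-diameter at most $2\rho$. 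That arc has length $\lesssim\rho$ provided the circle's radius $\sqrt{1-u^2}$ is not small. If the radius is small, the whole circle has length $\lesssim \sqrt{1-u^2}$.

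The one technical point is that the arc length can exceed the chord only by a bounded factor. When $|u|\le 1/2$ this is immediate, since an arc of a circle of radius $\ge\sqrt3/2$ with chord $\le2\rho$ has length $\lesssim\rho$. When $s>1/2$ we simply use $|A_s|\le|\mathcal M(c,\rho)|\lesssim\rho^2\lesssim\rho s$, since $\rho<2$. Integrating the arc-length bound over $|u|<s\le1/2$ gives $|A_s|\lesssim \rho s$.

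Then we write, using $\alpha\le0$,
\[
\int_{\mathcal M(c,\rho)}|e\cdot\omega|^\alpha d\omega=\int_0^\infty\bigl|\{\omega\in\mathcal M(c,\rho):|e\cdot\omega|^\alpha>\lambda\}\bigr|\,d\lambda .
\]
The case $\alpha=0$ is just the area bound, so take $\alpha<0$. The superlevel set equals $A_{\lambda^{1/\alpha}}$. Split the $\lambda$-integral at $\lambda_0=\rho^\alpha$. For $\lambda\le\lambda_0$, use $|A|\lesssim\rho^2$; this contributes $\rho^{2+\alpha}$. For $\lambda>\lambda_0$, use $|A_{\lambda^{1/\alpha}}|\lesssim\rho\lambda^{1/\alpha}$. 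Since $1/\alpha<-1$, the tail integral converges:
\[
\int_{\rho^\alpha}^\infty\rho\lambda^{1/\alpha}d\lambda=\frac{\rho\,\rho^{1+\alpha}}{-1/\alpha-1}\lesssim\rho^{2+\alpha}.
\]
The constant depends only on $\alpha$. We expect the main obstacle to be the uniform bound $|A_s|\lesssim\rho s$, independent of the relative position of $e$ and $c$, including the case where the cap sits near the poles $\pm e$. The coordinate argument above handles this, because area is uniform in $u$ and only the arc-length versus chord comparison needs care.
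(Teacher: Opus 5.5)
Your argument is correct, and it takes a genuinely different route from the paper's.

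\textbf{The paper's route.} It rotates $e$ to the north pole and argues by cases:
\begin{itemize}
\item for $\rho\geq 1/8$ it integrates over the whole sphere;
\item for $\rho<1/8$ and $|c_3|>2\rho$, one has $|\omega_3|>\rho$ on the entire cap, so the integral is at most $|\mathcal M(c,\rho)|\rho^{\alpha}$;
\item for $\rho<1/8$ and $|c_3|\leq 2\rho$, the cap lies in a coordinate box of azimuthal width $O(\rho)$ with $|\cos\varphi|\leq 3\rho$, and the integral is computed directly.
\end{itemize}

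\textbf{Your route.} You put all the geometry into a single distribution-function bound, $|\{\omega\in\mathcal M(c,\rho):|e\cdot\omega|<s\}|\lesssim\min\{\rho^2,\rho s\}$, and let the layer-cake formula do the integration.

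\textbf{Comparison.} Your approach buys uniformity:
\begin{itemize}
\item there is no case split on where $c$ sits relative to the great circle $e^{\perp}$ or the poles $\pm e$;
\item there is no need to take $\rho$ small so that a coordinate-box description is valid;
\item the same cap--band bound would handle any weight that is a decreasing function of $|e\cdot\omega|$, not only powers.
\end{itemize}
The paper's route is more explicit. It needs only a pointwise lower bound in one case and a one-line coordinate integral in the other.

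\textbf{Two points to tighten in a write-up.}
\begin{itemize}
\item Say briefly why the slice of the cap on $\{e\cdot\omega=u\}$ is a single arc: it is a circle cut by the half-space $\{\omega\cdot c\geq 1-\rho^2/2\}$.
\item With $d\omega=du\,d\phi$, the slice measure is the angular length $\Delta\phi$, i.e.\ the arc length divided by $\sqrt{1-u^2}$. Your restriction to $|u|\leq 1/2$ is exactly what makes these two quantities comparable, so say so explicitly rather than integrating the arc length itself.
\end{itemize}
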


    \begin{proof}
    The case $\alpha=0$ is immediate, so it suffices to consider $\alpha\in(-1,0)$.
By rotation, we may assume $e=(0,0,1)$. Thus
      \begin{equation*}
        \int_{\mathcal{M}(c,\rho)}\moc{e\cdot \omega}^{\alpha} d\omega = \int_{\mathcal{M}(c,\rho)} \moc{\omega_3}^{\alpha}d\omega.
      \end{equation*}
We consider separately the cases $\rho\geq\frac{1}{8}$ and
$0\leq\rho<\frac{1}{8}$.

For $\rho\geq\frac18$, integration over the whole sphere gives
      \begin{align}\label{formu:angular-estimate-2}
        \int_{\mathcal{M}(c,\rho)}\moc{\omega_3}^{\alpha}d\omega&\lesssim \int_{\mathbb{S}^2}\moc{\omega_3}^{\alpha} d\omega
                                                                \lesssim \int_{[0,2\pi]\times [0,\pi]} \moc{\cos \varphi}^{\alpha}d\varphi d\theta
                                                               \lesssim 1 \lesssim \rho^{2+\alpha}.
      \end{align}

For $0\leq\rho<\frac18$, we distinguish two cases according to $|c_3|$.
If $\moc{c_3} > 2\rho,$ then $ \moc{\omega_3}\geq \moc{c_3} - \moc{\omega_3  - c_3}\geq \moc{c_3} - \moc{\omega-c }> \rho.$ Consequently,
         \begin{equation}\label{formu:angular-estimate-3}
           \int_{\mathcal{M}(c,\rho)} \moc{\omega_3}^{\alpha} d\omega \leq \moc{\mathcal{M}(c,\rho)}\rho^{\alpha},
         \end{equation}
The cap-area bound $|\mathcal{M}(c,\rho)| \lesssim \rho^2$
then proves
        \eqref{Estimate for Angular integration}.

If $\moc{c_3} \leq 2\rho,$ then $\moc{\omega_3}\leq \moc{c_3} + \moc{\omega-c}<3\rho.$ Rotating about the third axis, we may assume that $c$ has azimuthal
angle zero. In spherical coordinates, we then obtain
        \begin{align}\label{formu:angular-estimate-4}
         \int_{\mathcal{M}(c,\rho)} \moc{\omega_3}^{\alpha}d\omega &\leq \int_{[-2\rho,2\rho]\times \{ \moc{\cos \varphi}\leq 3\rho \}} \moc{\cos \varphi}^{\alpha} d\varphi d\theta
                                                                    \lesssim \rho \int_{-3\rho}^{3\rho}\dfrac{\moc{x}^{\alpha}}{\sqrt{1-\moc{x}^2}}dx
         \lesssim \rho^{2+\alpha}.
        \end{align}
Together, \eqref{formu:angular-estimate-2}, \eqref{formu:angular-estimate-3}, and \eqref{formu:angular-estimate-4} establish \eqref{Estimate for Angular integration}.
    \end{proof}

We next compare the characteristic weights before and after collision.
    \begin{lemma}\label{lem:Weighted -Estimate}
Let $u$ and $v$ be the outgoing momenta after the collision of $p$ and $q$, ordered so that $u$ has the lower energy and $v$ the higher energy, equivalently $\moc{u}\leq \moc{v}$. With the notation $S_a$ introduced above, we have
       \begin{equation}\label{formu:Weighted -Estimate}
         \dfrac{S_p}{S_{u}S_{v}} \lesssim S^{-1}_{u}+ u^0\sqrt{\dfrac{v^0}{\mathfrak{s}p^0}}\left( S^{-1}_{u}+S^{-1}_{v} \right)\lesssim u^0\left( S^{-1}_{u}+S^{-1}_{v} \right) .
       \end{equation}
    \end{lemma}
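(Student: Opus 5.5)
The plan is to compare the three characteristic displacements with the triangle inequality, and to reduce the estimate to two purely momentum-space inequalities: a lower bound for $|\widehat u-\widehat v|$ and an upper bound for $\min\{|\widehat p-\widehat u|,|\widehat p-\widehat v|\}$.

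\textbf{Step 1 (reduction).} Write $X_a=x-(1+t)\widehat a$, so $S_a=\langle X_a\rangle$. For $a\in\{u,v\}$ we have
\[
S_p\le S_a+(1+t)|\widehat p-\widehat a|,\qquad (1+t)|\widehat u-\widehat v|\le |X_u|+|X_v|\le S_u+S_v .
\]
The first inequality with $a=u$ gives
\[
\frac{S_p}{S_uS_v}\le S_v^{-1}+\frac{(1+t)|\widehat p-\widehat u|}{S_uS_v},
\]
and with $a=v$ it gives the same bound with $S_v^{-1}$ replaced by $S_u^{-1}$ and $\widehat u$ replaced by $\widehat v$. Choose whichever version has the smaller velocity difference. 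By the second inequality, $(1+t)/(S_uS_v)\le (S_u^{-1}+S_v^{-1})/|\widehat u-\widehat v|$. Hence
\[
\frac{S_p}{S_uS_v}\lesssim S_u^{-1}+S_v^{-1}+\frac{\min_{a\in\{u,v\}}|\widehat p-\widehat a|}{|\widehat u-\widehat v|}\bigl(S_u^{-1}+S_v^{-1}\bigr).
\]
The extra term $S_v^{-1}$ is harmless, since it is absorbed into the right-hand side of \eqref{formu:Weighted -Estimate}; in the final form it is dominated by $u^0(S_u^{-1}+S_v^{-1})$ because $u^0\ge1$. For the first form of \eqref{formu:Weighted -Estimate}, I would check whether the case split can be arranged so that only $S_u^{-1}$ appears outside the bracket; if not, $S_v^{-1}$ stays inside the bracket term.

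\textbf{Step 2 (lower bound for $|\widehat u-\widehat v|$).} Use the identity
\[
(u^0v^0)^2|\widehat u-\widehat v|^2=|u\,v^0-v\,u^0|^2=\tfrac14 g(u,v)^2\mathfrak s(u,v)+|u\times v|^2 .
\]
Since $g$ and $\mathfrak s$ are collision invariants, this gives $|\widehat u-\widehat v|\ge g\sqrt{\mathfrak s}/(2u^0v^0)$. I would record this identity in the appendix with the other collision identities.

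\textbf{Step 3 (upper bound for the numerator; main obstacle).} The target is
\[
\min_{a\in\{u,v\}}|\widehat p-\widehat a|\lesssim \frac{g}{\sqrt{p^0v^0}}.
\]
Combined with Step 2, this gives the ratio bound $\lesssim u^0\sqrt{v^0/(\mathfrak s p^0)}$, which is exactly the first inequality in \eqref{formu:Weighted -Estimate}.

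To prove the target, I would first try the same identity with $(p,a)$ in place of $(u,v)$:
\[
p^0a^0|\widehat p-\widehat a|=|p\,a^0-a\,p^0|.
\]
I would then bound the right-hand side in the center-of-momentum frame, where $p$, $u$, $v$ all lie on a sphere of radius $g/2$. This geometry suggests $g(p,a)\le g$ for $a\in\{u,v\}$, and in fact $g(p,a)\lesssim g\sin(\theta_a/2)$ for an appropriate half-angle. The delicate part is converting $|p\,a^0-a\,p^0|$ into a bound with the geometric mean $\sqrt{p^0a^0}$ rather than the larger of $p^0$ and $a^0$. I expect this to require the energy ordering $u^0\le v^0$, together with the observation that at least one outgoing momentum has energy comparable to $p^0$ (energy conservation and $v^0\ge E/2$). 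If the direct route fails, the fallback is to work in the explicit parametrization \eqref{defi:p'} and estimate $p\,v^0-v\,p^0$ component by component.

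\textbf{Step 4 (second inequality).} It remains to show $\sqrt{v^0/(\mathfrak s p^0)}\lesssim1$. Since $v^0\le E=p^0+q^0$, it suffices to have $\mathfrak s\gtrsim 1+q^0/p^0$. This follows from
\[
\mathfrak s=2+2(p^0q^0-p\cdot q)\ge 2+2(p^0q^0-|p||q|)
\]
and the elementary bound $p^0q^0-|p||q|\ge \tfrac12\bigl(q^0/p^0+p^0/q^0\bigr)$, obtained by rationalizing. Then $u^0\sqrt{v^0/(\mathfrak s p^0)}\lesssim u^0$, and since $u^0\ge1$ the term $S_u^{-1}$ is also absorbed, giving the second inequality in \eqref{formu:Weighted -Estimate}.
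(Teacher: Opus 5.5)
Your outline matches the paper's proof: a triangle inequality through an outgoing characteristic, the lower bound $|\widehat u-\widehat v|\ge g\sqrt{\mathfrak s}/(2u^0v^0)$, an upper bound for $|\widehat p-\widehat v|$, and $v^0\lesssim\mathfrak s p^0$. Steps 2 and 4 are correct, and your identity in Step 2 is a fine replacement for the paper's eigenvalue argument.

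The gap is Step 3. You flag it as the main obstacle but do not prove it, and the routes you sketch are not the right mechanism. Bounding $|p\,a^0-a\,p^0|$ in the center-of-momentum frame would not transfer to the lab frame, because that quantity is not Lorentz invariant. Also, the geometric-mean bound needs no energy ordering. The missing ingredient is an exact identity valid for any two momenta $p,a$:
\[
|p\,a^0-a\,p^0|^2=p^0a^0\,g(p,a)^2-(p^0-a^0)^2,
\qquad\text{i.e.}\qquad
p^0a^0|\widehat p-\widehat a|^2=g(p,a)^2-\frac{(p^0-a^0)^2}{p^0a^0}\le g(p,a)^2 .
\]
It follows by expanding both sides with $|p|^2=(p^0)^2-1$ and $|a|^2=(a^0)^2-1$. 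Alternatively, combine your Step~2 identity with \eqref{formu:id-g} and $|p-a|^2=g(p,a)^2+(p^0-a^0)^2$. Your CM-frame picture already gives $g(p,a)\le g$ for $a\in\{u,v\}$, so together these yield $|\widehat p-\widehat a|\le g/\sqrt{p^0a^0}$.

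The ordering $u^0\le v^0$ enters only in choosing $a=v$. That choice gives $g/\sqrt{p^0v^0}$, whereas $a=u$ would give the weaker $g/\sqrt{p^0u^0}$. So in Step 1 drop the minimum and always use the $a=v$ branch, $S_p\le S_v+(1+t)|\widehat p-\widehat v|$. Dividing by $S_uS_v$ and using $(1+t)|\widehat u-\widehat v|\le S_u+S_v$ puts exactly $S_u^{-1}$ outside the bracket. This settles your doubt about the first form of \eqref{formu:Weighted -Estimate}. With this filled in, your argument is the paper's proof.
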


    \begin{proof}
The outgoing velocity identity \eqref{formu:estimate for p'q'} yields
      \begin{align}\label{formu:Weighted -Estimate-1}
       \moc{\widehat{u} - \widehat{v}}&=\moc{\widehat {p}'-\widehat{q}'}
       =\moc{ \dfrac{g}{2p'^0q'^0}\left( EI - \dfrac{P\otimes P}{E+\sqrt{\mathfrak{s}}} \right)\omega}                    \geq \dfrac{g\sqrt{\mathfrak{s}}}{2u^0v^0},
      \end{align}
Here the positive matrix $ EI - \dfrac{P\otimes P}{E+\sqrt{\mathfrak{s}}} $ has eigenvalues $E,E,\sqrt{\mathfrak{s}}$. Energy and momentum conservation imply $g^2(p,v) \leq g^2$, while $p^0v^0\moc{\widehat{p}-\widehat{v}}^2\leq g^2$ gives
      \begin{equation}\label{formu:Weighted -Estimate-2}
        \moc{\widehat{p}-\widehat{v}}\leq  \dfrac{g}{\sqrt{p^0v^0}}.
      \end{equation}
Combining \eqref{formu:Weighted -Estimate-1} and
\eqref{formu:Weighted -Estimate-2} with the triangle inequality, we obtain
      \begin{align*}
      \notag  S_{p}&=\langle x- (1+t)\widehat{p} \rangle\lesssim  \langle x- (1+t)\widehat{v} \rangle  + (1+t)\moc{\widehat{p}-\widehat{v}}\\
      \notag                      &\lesssim \langle x- (1+t)\widehat{v} \rangle + u^0 \sqrt{\dfrac{v^0}{\mathfrak{s}p^0}} (1+t)\moc{\widehat{u}-\widehat{v}}\\
                            &\lesssim \langle x- (1+t)\widehat{v} \rangle + u^0\sqrt{\dfrac{v^0}{\mathfrak{s}p^0}}(\langle x- (1+t)\widehat{u} \rangle+ \langle x- (1+t)\widehat{v} \rangle )\\
       \notag                     &\lesssim S_v+ u^0\sqrt{\dfrac{v^0}{\mathfrak{s}p^0}}(S_u + S_v)
       \notag                      \lesssim u^0(S_u + S_v),
      \end{align*}
the final inequality uses $\sqrt{\dfrac{v^0}{\mathfrak{s}p^0}} \lesssim 1$, which follows from the definitions. Dividing both sides by the weights, we obtain
\eqref{formu:Weighted -Estimate}. This completes the proof.
    \end{proof}

Specially, Lemma~\ref{lem:Weighted -Estimate} yields
    \begin{equation}\label{Weighted -Estimate}
      \dfrac{W_p}{W_{p'}W_{q'}} = \left(\dfrac{p^0}{p'^0q'^0}\right)^{m}\left(\dfrac{S_{p}}{S_{p'}S_{q'}}\right)^{\ell}\lesssim  (u^0)^{\ell}\left(\dfrac{p^0}{p'^0q'^0}\right)^{m}\left( S_{u}^{-\ell} + S_{v}^{-\ell}\right).
    \end{equation}

The following lemma shows how bounds on $u^0$ or $S_u$
restrict $\omega$ to spherical caps.
    \begin{lemma}\label{Set subset}
Let $h,\rho>0$ and $g>0$. When $P\neq0$,
          \begin{equation}\label{formu:Set subset-1}
         \{\omega\in \mathbb{S}^2 | u^0\leq h \} \subset\mathcal{M}\left(\pm \frac{P}{\moc{P}},2\sqrt{\dfrac{h}{E}}\right).
       \end{equation}
If $\{ \omega\in \mathbb{S}^2| \moc{x-(1+t)\widehat{u}}\leq \rho \}$ is nonempty, choose $\omega_0 $ in this set. Then
       \begin{equation}\label{formu:Set subset}
       \begin{aligned}
           \{ \omega\in &\mathbb{S}^2|u^0\leq h \}\cap \{  \omega\in \mathbb{S}^2| \moc{x-(1+t)\widehat{u}}\leq \rho \}
           &\subset \mathcal{M}\left( \pm \omega_0, \dfrac{ChE\rho}{(1+t)g\sqrt{\mathfrak{s}}} \right),
       \end{aligned}
          \end{equation}
If $\{\omega:|x-(1+t)\widehat v(\omega)|\leq\rho\}$ is nonempty,
choose a point $\omega_v$ in that set. Then
        \begin{equation}\label{formu:Set subset-3}
        \begin{aligned}
              \{  \omega\in &\mathbb{S}^2| \moc{x-(1+t)\widehat{v}}\leq \rho \}
              &\subset \mathcal{M}\left( \pm \omega_v, \dfrac{CE^2\rho}{(1+t)g\sqrt{\mathfrak{s}}} \right).
        \end{aligned}
        \end{equation}
    \end{lemma}
    \begin{proof}
We first prove \eqref{formu:Set subset-1}.
By \eqref{defi:p'} and the definition of $u$,
$$
u^0=\dfrac{E}{2}\left(
1-\dfrac{g\moc{P}}{E\sqrt{\mathfrak s}}
\moc{\dfrac P{|P|}\cdot\omega}
\right).
$$
Thus, for $\omega\in\{\omega\in\mathbb{S}^2:u^0\leq h\}$,
we have
$$
\dfrac{g\moc{P}}{E\sqrt{\mathfrak{s}}}
\moc{\dfrac P{|P|}\cdot\omega}
\geq 1-\dfrac{2h}{E}.
$$
Since $\dfrac{g\moc{P}}{E\sqrt{\mathfrak{s}}}\leq1$, it follows that
$1-\dfrac{|P\cdot\omega|}{\moc{P}}\leq\dfrac{2h}{E}$.
This proves \eqref{formu:Set subset-1}.

Next, let
$\omega\in\{\omega\in\mathbb{S}^2:
\moc{x-(1+t)\widehat{u}}\leq\rho\}$.
Since $\omega_0$ belongs to the same set,
$$
\moc{\widehat{u}(\omega)-\widehat{u}(\omega_0)}
\leq
\dfrac{
\moc{x-(1+t)\widehat{u}(\omega)}
+\moc{x-(1+t)\widehat{u}(\omega_0)}
}{1+t}
\lesssim\dfrac{\rho}{1+t}.
$$
The choice of $u$ gives $u(-\omega)=u(\omega)$ almost everywhere.
Each of $u(\omega)$ and $u(\omega_0)$ is obtained from $p'$
by choosing the corresponding direction or its negative.
Applying \eqref{formu:Lip-estimate for omega} with these choices,
we obtain
$$
\min\{\moc{\omega+\omega_0},\moc{\omega-\omega_0}\}
\lesssim
\dfrac{u^0E}{g\sqrt{\mathfrak{s}}}
\moc{\widehat{u}(\omega)-\widehat{u}(\omega_0)}
\lesssim
\dfrac{u^0E\rho}{(1+t)g\sqrt{\mathfrak{s}}}.
$$
Consequently,
\begin{equation*}
\{\omega\in\mathbb{S}^2:
\moc{x-(1+t)\widehat{u}}\leq\rho\}
\subset
\left\{
\omega\in\mathbb{S}^2:
\min\{\moc{\omega+\omega_0},\moc{\omega-\omega_0}\}
\lesssim
\dfrac{u^0E\rho}{(1+t)g\sqrt{\mathfrak{s}}}
\right\}.
\end{equation*}
Combining this inclusion with $u^0\leq h$ gives
\eqref{formu:Set subset}.
For \eqref{formu:Set subset-3}, choose $\omega_v$ from the $v$-sublevel
set itself. Then $|\widehat v(\omega)-\widehat v(\omega_v)|\leq2\rho/(1+t)$.
Represent both outgoing momenta by the appropriate signs of their collision
directions and apply \eqref{formu:Lip-estimate for omega}, using $v^0\leq E$.
This gives the stated pair of caps. If a sublevel set is empty, its angular
integral is zero, so no center is needed.
This completes the proof of Lemma \ref{Set subset}.
\end{proof}

    \begin{remark}
        We do not treat the case $P=0$ separately, since for each fixed $p$, the set ${q=-p}$ has measure zero and therefore does not affect the  integrations. Actually, when $P=0$, we have $u^0=v^0=E/2$, and the above conclusion follows by direct verification. We omit further details.
     \end{remark}

We next apply the two cap estimates to the weighted angular integral.
Interpolating the resulting bounds gives the time decay required
for the nonlinear estimates.
    \begin{proposition}
    \label{key estimate-1}
Assume that $n > 3 - r'$. 
For every $e\in \mathbb{S}^2,$ one has
      \begin{equation}\label{key-estimate-u}
        \int_{\mathbb{S}^2} (u^0)^{-n}S_u^{-\ell r'}\dfrac{d\omega}{\moc{e\cdot \omega}^{\frac{r'}{r}}} \lesssim (1+t)^{-(1+\varepsilon)}\dfrac{E^{\frac{r'+3\varepsilon}{2}}}{(g\sqrt{\mathfrak{s}})^{1+\varepsilon}}
        (u_{\min}^0)^{-n+\frac{4-r'+\varepsilon}{2}},
      \end{equation}
      and
      \begin{equation}\label{key-estimate-v}
        \int_{\mathbb{S}^2} (u^0)^{-n}S_v^{-\ell r'}\dfrac{d\omega}{\moc{e\cdot \omega}^{\frac{r'}{r}}} \lesssim (1+t)^{-(1+\varepsilon)}\dfrac{E^{\frac{2+ r'+5\varepsilon}{2}}}{(g\sqrt{\mathfrak{s}})^{1+\varepsilon}}(u_{\min}^0)^{-n+\frac{2-r'-\varepsilon}{2}},
      \end{equation}
      where $u_{\min}^0 = \inf_{\omega\in \mathbb{S}^2} u^0.$
    \end{proposition}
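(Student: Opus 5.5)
We decompose the sphere dyadically in both the outgoing energy $u^0$ and the characteristic localization $S_u$ (respectively $S_v$). We bound each piece by combining the cap inclusions of Lemma~\ref{Set subset} with the singular angular bound of Lemma~\ref{lem:Local angular integration lemma} applied with $\alpha=-r'/r\in(-1,0]$. The condition $\alpha>-1$ holds since $r>2$ gives $r'/r=r'-1<1$.

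\textbf{Dyadic pieces for \eqref{key-estimate-u}.} Write $h_a=2^a u^0_{\min}$ and $\rho_b=2^b$, for $a,b\geq0$. Let $A_{a,b}$ be the set of $\omega$ with $u^0\sim h_a$ and $S_u\sim\rho_b$, where for $b=0$ we take $S_u\leq1$. On $A_{a,b}$ we have $|x-(1+t)\widehat u|\lesssim\rho_b$, so two cap inclusions are available:
\begin{itemize}
\item by \eqref{formu:Set subset-1}, $A_{a,b}\subset\mathcal M(\pm P/|P|,\,2\sqrt{h_a/E})$;
\item by \eqref{formu:Set subset}, $A_{a,b}\subset\mathcal M(\pm\omega_0,\,Ch_aE\rho_b/((1+t)g\sqrt{\mathfrak s}))$.
\end{itemize}
Lemma~\ref{lem:Local angular integration lemma} then bounds the singular angular integral over $A_{a,b}$ by $\min\{R_1,R_2\}^{2-r'/r}$. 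Here $R_1=(h_a/E)^{1/2}$ and $R_2=h_aE\rho_b/((1+t)g\sqrt{\mathfrak s})$. Note that $2-r'/r=3-r'$.

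\textbf{Interpolation.} We use $\min\{R_1,R_2\}\leq R_1^{1-\theta}R_2^{\theta}$ with $\theta(3-r')=1+\varepsilon$. This requires $\theta\leq1$, i.e.\ $\varepsilon\leq 2-r'$, which is among the conditions in \eqref{equ:vare}. The contribution of $A_{a,b}$ is then at most
\[
h_a^{-n}\rho_b^{-\ell r'}\,(1+t)^{-(1+\varepsilon)}(g\sqrt{\mathfrak s})^{-(1+\varepsilon)}\,(h_a/E)^{\frac{3-r'-1-\varepsilon}{2}}\,(h_aE\rho_b)^{1+\varepsilon}.
\]
Collecting powers, the factor of $E$ is $E^{(1+\varepsilon)-(2-r'-\varepsilon)/2}=E^{(r'+3\varepsilon)/2}$, as claimed. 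The factor of $h_a$ is $h_a^{-n+(2-r'-\varepsilon)/2+1+\varepsilon}=h_a^{-n+(4-r'+\varepsilon)/2}$, also as claimed.

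\textbf{Summation.} The sum over $b$ converges because $\ell r'>1+\varepsilon$, which holds since $\ell>1$. The sum over $a$ is geometric, dominated by $a=0$, and converges provided $-n+(4-r'+\varepsilon)/2<0$, i.e.\ $n>(4-r'+\varepsilon)/2$. This follows from $n>3-r'$ when $\varepsilon<2-r'$. This yields \eqref{key-estimate-u} with $u^0_{\min}$.

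\textbf{The estimate \eqref{key-estimate-v}.} The localization is now in $S_v$, so we replace the second cap by \eqref{formu:Set subset-3}, of radius $R_2'=CE^2\rho_b/((1+t)g\sqrt{\mathfrak s})$. This radius carries no factor of $h_a$ and has an extra factor of $E$. The same interpolation gives the contribution
\[
h_a^{-n}\rho_b^{-\ell r'}(h_a/E)^{(2-r'-\varepsilon)/2}\,(E^2\rho_b)^{1+\varepsilon}\,(1+t)^{-(1+\varepsilon)}(g\sqrt{\mathfrak s})^{-(1+\varepsilon)}.
\]
The exponent of $E$ is $2+2\varepsilon-(2-r'-\varepsilon)/2=(2+r'+5\varepsilon)/2$, and the exponent of $h_a$ is $-n+(2-r'-\varepsilon)/2$, matching the statement. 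The $a$-sum converges since $n>3-r'>(2-r'-\varepsilon)/2$.

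\textbf{Main obstacle.} The main difficulty is the validity of the cap inclusions when $P=0$ or when the sublevel sets are empty. Empty pieces contribute zero, and $P=0$ is a null set, as noted in the remark after Lemma~\ref{Set subset}. A further subtlety is that in the $v$ case the caps are taken over the whole $v$-sublevel set rather than $A_{a,b}$; this is harmless, since we only need an inclusion. Beyond that, the only real care is checking that $\theta\in[0,1]$, so that the interpolation is legitimate. Everything else is exponent bookkeeping.
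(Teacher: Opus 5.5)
Your proposal is correct and is essentially the paper's argument. Both proofs combine the two cap inclusions of Lemma~\ref{Set subset} with Lemma~\ref{lem:Local angular integration lemma} at $\alpha=-r'/r$ and interpolate with weight $\theta=(1+\varepsilon)/(3-r')$. The only difference is that you discretize dyadically and interpolate the two radii on each piece before summing, whereas the paper uses a continuous layer-cake representation and interpolates the two fully integrated bounds. Your ordering needs only $n>(4-r'+\varepsilon)/2$ in \eqref{key-estimate-u}, while the paper's ordering needs $n>3-r'$.
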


    \begin{proof}
For $n>0$ and $\ell r'>0$, we have
\begin{equation}\label{layer-cake representations}
 (u^0)^{-n}
=n\int_{u^0}^{\infty}\xi^{-n-1}d\xi,
\qquad
S_u^{-\ell r'}
=\ell r'\int_{|x-(1+t)\widehat u|}^{\infty}
\frac{\rho}{(1+\rho^2)^{\ell r'/2+1}}d\rho.
\end{equation}
By Tonelli's theorem and Lemmas~\ref{lem:Local angular integration lemma}
and~\ref{Set subset}, we obtain
    \begin{align}\label{formu:I-gain-1}
   \notag 	\int_{\mathbb{S}^2}(u^0)^{-n}
    	& S_{u}^{-\ell r'}\dfrac{d\omega}{\moc{e\cdot \omega}^{\frac{r'}{r}}}\\
    \notag	&\lesssim\int_{u_{\min}^0}^{\infty}\dfrac{1}{\xi^{ n+1}}d\xi \int_{0}^{\infty}\dfrac{\rho}{(1+\rho^2)^{\ell r'/2 + 1}}d\rho\int_{\substack{\{\omega\in\mathbb S^2:\ u^0\leq\xi,\\
|x-(1+t)\widehat u|\leq\rho\}}} \dfrac{d\omega}{\moc{e\cdot \omega}^{\frac{r'}{r}}}\\
     \	&\lesssim\int_{u_{\min}^0}^{\infty}\dfrac{1}{\xi^{ n+1}}d\xi \int_{0}^{\infty}\dfrac{\rho}{(1+\rho^2)^{\ell r'/2 + 1}}d\rho \int_{\mathcal{M}(\pm \omega_0, \frac{C\xi E\rho}{(1+t)g\sqrt{\mathfrak{s}}})}\dfrac{d\omega}{\moc{e\cdot \omega}^{\frac{r'}{r}}}\\
     \notag	&\lesssim\dfrac{ E^{3-r'}}{(1+t)^{3-r'}g^{3-r'}\mathfrak{s}^{\frac{3-r'}{2}}}\int_{u_{\min}^0}^{\infty} \int_{0}^{\infty}
    	\xi^{-n+2-r'}
    	\dfrac{\rho^{4-r'}}{(1+\rho^2)^{\ell r'/2+1 }}d\rho d\xi\\
    	&\lesssim
    \notag	\dfrac{E^{3-r'}}{(1+t)^{3-r'}(g\sqrt{\mathfrak{s} })^{3-r'}} (u_{\min}^0)^{-n+3-r'}.
    \end{align}
For a second bound, we use $S_u^{-\ell r'}\leq1$ and
\eqref{formu:Set subset-1} to obtain
\begin{align}\label{formu:I-gain-2}
    \notag	\int_{\mathbb{S}^2}(u^0)^{-n}
    	& S_{u}^{-\ell r'}\dfrac{d\omega}{\moc{e\cdot \omega}^{\frac{r'}{r}}}
      \lesssim\int_{u_{\min}^0}^\infty \xi^{-n-1}\,d\xi
       \int_{\{\omega:u^0\leq\xi\}}\frac{d\omega}{|e\cdot\omega|^{r'/r}}\\
        & \lesssim \int_{u_{\min}^0}^{\infty} \xi^{-n-1} d\xi\int_{\mathcal{M}(\pm \frac{P}{\moc{P}}, 2\sqrt{\frac{\xi}{E}})} \dfrac{d\omega}{\moc{e\cdot \omega}^{\frac{r'}{r}}}\\
     \notag   &\lesssim E^{-\frac{3-r'}{2}} \int_{u_{\min}^0}^{\infty} \xi^{-n-1+\frac{3-r'}{2}}d\xi\lesssim E^{-\frac{3-r'}{2}}(u_{\min}^0)^{-n+\frac{3-r'}{2}}.
\end{align}
Interpolating \eqref{formu:I-gain-1} and \eqref{formu:I-gain-2}
with the exponents $\dfrac{1+\varepsilon}{3-r'}$ and
$\dfrac{2-\varepsilon-r'}{3-r'}$, respectively, we obtain
\begin{align*}
      \int_{\mathbb{S}^2}(u^0)^{-n}&
    	 S_{u}^{-\ell r'}\moc{e\cdot \omega}^{-\frac{r'}{r}}d\omega\\
    &\lesssim \left( \dfrac{E^{3-r'}}{(1+t)^{3-r'}(g\sqrt{\mathfrak{s} })^{3-r'}} (u_{\min}^0)^{-n+3-r'} \right)^{\frac{1+\varepsilon}{3-r'}}
    \left( E^{-\frac{3-r'}{2}}(u_{\min}^0)^{-n+\frac{3-r'}{2}} \right)^{\frac{2-\varepsilon-r'}{3-r'}}\\
    &\lesssim (1+t)^{-(1+\varepsilon)}
    \dfrac{E^{\frac{r'+3\varepsilon}{2}}}
    {(g\sqrt{\mathfrak{s}})^{1+\varepsilon}}
    (u_{\min}^0)^{-n+\frac{4-r'+\varepsilon}{2}},
\end{align*}
this proves \eqref{key-estimate-u}.
For the integral containing $S_v$, we use
\eqref{formu:Set subset-3} to obtain
\begin{align}\label{formu:I-gain-3}
    \notag	\int_{\mathbb{S}^2}(u^0)^{-n}
    	& S_{v}^{-\ell r'}\dfrac{d\omega}{\moc{e\cdot \omega}^{\frac{r'}{r}}}
    	  \lesssim (u_{\min}^0)^{-n}\int_{0}^{\infty} d\rho \int_{\moc{x-(1+t)\widehat{v}}\leq \rho} \dfrac{\rho}{(1+\rho^2)^{\ell r'/2 +1}} \dfrac{d\omega}{\moc{e\cdot \omega}^{\frac{r'}{r}}}\\
     	&\lesssim (u_{\min}^0)^{-n}\int_{0}^{\infty}\dfrac{\rho}{(1+\rho^2)^{\ell r'/2 + 1}}d\rho \int_{\mathcal{M}(\pm \omega_v,\frac{C\rho E^2}{(1+t)g\sqrt{\mathfrak{s}}})}\dfrac{d\omega}{\moc{e\cdot \omega}^{\frac{r'}{r}}}\\
   \notag 	&\lesssim\dfrac{(u_{\min}^0)^{-n} E^{6-2r'}}{(1+t)^{3-r'}g^{3-r'}\mathfrak{s}^{\frac{3-r'}{2}}} \int_{0}^{\infty}
    	\dfrac{\rho^{4-r'}}{(1+\rho^2)^{\ell r'/2+1}}d\rho \\
    	\notag&\lesssim
    	\dfrac{E^{6-2r'}}{(1+t)^{3-r'}(g\sqrt{\mathfrak{s} })^{3-r'}} (u_{\min}^0)^{-n}.
\end{align}
Applying the same argument as in \eqref{formu:I-gain-2},
we also have
\begin{align}\label{formu:I-gain-4}
    \int_{\mathbb{S}^2}(u^0)^{-n}
    	& S_{v}^{-\ell r'}\dfrac{d\omega}{\moc{e\cdot \omega}^{\frac{r'}{r}}}
    	 \lesssim\int_{u_{\min}^0}^{\infty} \dfrac{1}{\xi^{n+1}} d\xi \int_{u^0\leq \xi}\dfrac{d\omega}{\moc{e\cdot \omega}^{\frac{r'}{r}}}
    \lesssim E^{-\frac{3-r'}{2}}(u_{\min}^0)^{-n+\frac{3-r'}{2}}.
\end{align}
Interpolating \eqref{formu:I-gain-3} and \eqref{formu:I-gain-4}
with the same exponents proves \eqref{key-estimate-v}.
\end{proof}

    The preceding angular estimate is insufficient to treat the  range
$\gamma\in[0,1]$, so we use the following refined estimate for $g\geq1$.
    \begin{proposition}\label{key estimate-gamma>0}
      Assume that $g\geq1$, $2<r<\infty$, $\ell r'>3-r'$, and $n>6$. Then, for $t\geq0$,
      \begin{equation}\label{equ:key estimate-gamma>0}
        \int_{\mathbb S^2}(u^0)^{-n}S_u^{-\ell r'}\frac{d\omega}{|\widehat u_1-\widehat v_1|^{r'/r}}\lesssim \frac{(u_{\min}^0)^{-n+6}}{(1+t)^{3-r'}g^2}.
      \end{equation}
The implicit constant depends only on $r,n,\ell$ and is uniform in
$t,x,p,q$ on the stated region.
    \end{proposition}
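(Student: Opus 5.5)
The plan is to redo the layer-cake argument of \eqref{formu:I-gain-1}, but with $|\widehat u_1-\widehat v_1|^{-r'/r}$ in place of $|e\cdot\omega|^{-r'/r}$, and to exploit $g\geq1$ so that no interpolation with \eqref{formu:I-gain-2} is needed. The starting point is the outgoing identity from the introduction:
\[
|\widehat u_1-\widehat v_1|=\frac{g|L\cdot\omega|}{2u^0v^0},
\qquad |L|^2=E^2-P_1^2\geq\mathfrak s.
\]
Writing $e=L/|L|$, this gives
\[
|\widehat u_1-\widehat v_1|^{-r'/r}=\Bigl(\frac{2u^0v^0}{g|L|}\Bigr)^{r'/r}|e\cdot\omega|^{-r'/r}.
\]
Then use $v^0\leq E$ and $|L|\geq\sqrt{\mathfrak s}$ to bound the prefactor by $(u^0)^{r'/r}(E/(g\sqrt{\mathfrak s}))^{r'/r}$. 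Since $r>2$, we have $r'/r=r'-1<1$. Absorbing $(u^0)^{r'/r}$ into the power $(u^0)^{-n}$ reduces the integral to
\[
\Bigl(\frac{E}{g\sqrt{\mathfrak s}}\Bigr)^{r'-1}\int_{\mathbb S^2}(u^0)^{-n+r'-1}S_u^{-\ell r'}\frac{d\omega}{|e\cdot\omega|^{r'/r}}.
\]
This is exactly the type handled in Proposition~\ref{key estimate-1}.

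Next, repeat \eqref{formu:I-gain-1} verbatim with $n$ replaced by $\tilde n=n-r'+1>6-1>3-r'$. The two ingredients are the layer-cake representations \eqref{layer-cake representations} and the cap inclusion \eqref{formu:Set subset} with radius $C\xi E\rho/((1+t)g\sqrt{\mathfrak s})$. Lemma~\ref{lem:Local angular integration lemma} with $\alpha=-r'/r\in(-1,0)$ then gives cap integrals $\lesssim$ radius$^{3-r'}$. The $\rho$-integral converges because $\ell r'>3-r'$, which gives $\int\rho^{4-r'}(1+\rho^2)^{-\ell r'/2-1}d\rho<\infty$. The $\xi$-integral converges because $\tilde n>3-r'$. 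The result is
\[
\lesssim\Bigl(\frac{E}{g\sqrt{\mathfrak s}}\Bigr)^{r'-1}\frac{E^{3-r'}}{(1+t)^{3-r'}(g\sqrt{\mathfrak s})^{3-r'}}(u^0_{\min})^{-n+r'-1+3-r'}
=\frac{E^2(u^0_{\min})^{-n+2}}{(1+t)^{3-r'}(g\sqrt{\mathfrak s})^2}.
\]

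The main obstacle is that the factor $E^2$ is not controlled by $u^0_{\min}$ alone. It has to be traded for powers of $u^0_{\min}$, which is where $n>6$ and the extra $(u^0_{\min})^{4}$ margin in the claimed bound come from. The route is to use the formula from the proof of Lemma~\ref{Set subset},
\[
u^0_{\min}=\frac E2\Bigl(1-\frac{g|P|}{E\sqrt{\mathfrak s}}\Bigr)=\frac{E}{2}\cdot\frac{E^2\mathfrak s-g^2|P|^2}{E\sqrt{\mathfrak s}\,(E\sqrt{\mathfrak s}+g|P|)}.
\]
Since $E^2=\mathfrak s+|P|^2$, the numerator is $\mathfrak s^2+4|P|^2\geq 4E^2$, so $u^0_{\min}\gtrsim E/\mathfrak s$. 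One also has $\mathfrak s\leq E^2$ and $g\sqrt{\mathfrak s}\geq g^2\gtrsim\mathfrak s$ when $g\geq1$. Combining these, $E^2/(g\sqrt{\mathfrak s})^2\lesssim E^2/\mathfrak s^2\lesssim (u^0_{\min})^2$ up to a further factor $g^{-2}$, which I would retain from $(g\sqrt{\mathfrak s})^{-2}\leq g^{-2}\mathfrak s^{-1}$. If this bookkeeping falls short by a power, the spare margin $(u^0_{\min})^{4}$ covers it, because $u^0_{\min}\geq1$ makes extra positive powers harmless. Finally, I would check separately that the $\xi$-integral and the pair of caps $\mathcal M(\pm\omega_0,\cdot)$ cause no loss, exactly as in \eqref{formu:I-gain-1}.
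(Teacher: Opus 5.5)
\textbf{There is a genuine gap in the final bookkeeping step.}

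Your reduction is valid, but it only reproduces the unrefined bound of Proposition~\ref{key estimate-1}. After using $v^0\leq E$, $|L|\geq\sqrt{\mathfrak s}$ and the cap radius $C\xi E\rho/((1+t)g\sqrt{\mathfrak s})$, you arrive at
\[
\frac{E^2(u^0_{\min})^{-n+2}}{(1+t)^{3-r'}g^2\mathfrak s},
\]
so you would need $E^2/\mathfrak s\lesssim(u^0_{\min})^4$.

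Your argument for this spends $(g\sqrt{\mathfrak s})^{-2}$ twice. For $g\geq1$ this factor is comparable to $\mathfrak s^{-2}$. It can either absorb $E^2$ via $E/\mathfrak s\lesssim u^0_{\min}$, which leaves $(u^0_{\min})^{-n+4}(1+t)^{-(3-r')}$ with no factor $g^{-2}$. Or it can supply $g^{-2}$, which leaves $E^2/\mathfrak s$. It cannot do both.

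Moreover, $E^2/\mathfrak s\lesssim(u^0_{\min})^4$ is false. The formula you quote gives $u^0_{\min}\sim\mathfrak s/E+E/\mathfrak s$. Take a fast particle hitting a slow one: $|p|\sim E\to\infty$, $|q|\sim1$ in a generic direction, which is a set of positive measure. Then $g^2\sim\mathfrak s\sim E$ and $u^0_{\min}\sim1$. Your bound has size $(1+t)^{-(3-r')}$, while the claim has size $(1+t)^{-(3-r')}E^{-1}$. Since $u^0_{\min}$ stays bounded there, no margin in powers of $u^0_{\min}$ can recover the missing factor.

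This $g^{-2}$ is the whole content of the proposition. Without it, the final power of $p^0$ in $J_{4,1}$ would be $(\gamma+1)/2$ instead of $(\gamma-1)/2$; compare the remark after Proposition~\ref{decay of I-gain}. The loss comes from two places, each costing a factor $E/\sqrt{\mathfrak s}\sim g$ in this regime. One is the Lipschitz constant in \eqref{formu:Lip-estimate for omega}. The other is the lower bound $|L|\geq\sqrt{\mathfrak s}$, since generically $|L|\sim E$. Relatedly, $n>6$ is not there to trade $E^2$ for powers of $u^0_{\min}$.

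\textbf{The missing idea is a sharper localization on $\{u^0\leq h\}$ with $1\leq h\leq E/4$.}

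There $u(\omega)=p'(\pm\omega)$, and
$p'(\omega)-p'(\omega_0)=\frac g2\bigl(I+\frac{P\otimes P}{\sqrt{\mathfrak s}(E+\sqrt{\mathfrak s})}\bigr)(\omega-\omega_0)$ with a matrix $\geq I$. Also, $b\mapsto b/\sqrt{1-|b|^2}$ is $h^3$-Lipschitz on the convex set $\{|b|^2\leq1-h^{-2}\}$. Hence the relevant set lies in caps of radius $\lesssim h^3\rho/((1+t)g)$. The factor $E/\sqrt{\mathfrak s}$ is thereby traded for $h^2$.

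The prefactor then needs a dichotomy on $|L|$:
\begin{itemize}
\item If $|L|\geq E/(100h^2)$, then $|\widehat u_1-\widehat v_1|^{-1}\lesssim h^3/(g|e\cdot\omega|)$. Lemma~\ref{lem:Local angular integration lemma} then gives $h^6g^{-2}(\rho/(1+t))^{3-r'}$.
\item If $|L|<E/(100h^2)$, then $P$ is close to the $e_1$-axis and $\omega$ is close to $\pm P/|P|$, with transverse part $\leq2h/g$. Together with $E\leq h\mathfrak s$ this gives $|L\cdot\omega|\geq\sqrt{\mathfrak s}/10$, so there is no singularity. Interpolating the cap radii $h/g$ and $h^3\rho/((1+t)g)$ gives the same bound.
\end{itemize}
The layer-cake integral $\int\xi^{-n-1}\min\{\xi,E/4\}^6\,d\xi$ is where $n>6$ is actually used.

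Your crude estimate is adequate only on $\{u^0\geq E/4\}$, where $(u^0)^{-n}\lesssim E^{-n}$ absorbs the loss. This is exactly how the paper treats that piece.
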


   \begin{proof}
We use \eqref{layer-cake representations} as in the proof of
\eqref{formu:I-gain-1}, splitting the angular integral into
$u^0\leq \frac{E}{4}$ and $\frac{E}{4}<u^0\leq \frac{E}{2}$.

First suppose that $u^0\leq E/4$.
Fix $1\leq h\leq \frac{E}{4}$ and suppose that $\{u^0\leq h\}$
is nonempty. The identity
$u^0=\frac E2-\frac{g|P\cdot\omega|}{2\sqrt{\mathfrak s}}$,
together with $u^0\leq h$ and
$u_{\min}^0\geq \frac{E}{\mathfrak s}$, gives
\begin{equation*}
  \dfrac{\moc{P \cdot \omega}}{\moc{P}} \geq \dfrac{1}{2}, \quad \moc{\omega - \dfrac{P\cdot \omega}{\moc{P}^2}P}\leq \dfrac{2h}{g}, \quad E \leq h\mathfrak{s}.
\end{equation*}
From \eqref{defi:p'}, we have
$$
p'(\omega)-p'(\omega_0)
=
\dfrac{g}{2}
\left(
I+\dfrac{P\otimes P}
{\sqrt{\mathfrak{s}}(E+\sqrt{\mathfrak{s}})}
\right)(\omega-\omega_0).
$$
The differential of the map $\widehat u\mapsto u$ has norm at most
$(u^0)^3$. Applying these bounds with the signs corresponding to $u$,
we obtain, for
$\omega,\omega_0\in
\{u^0\leq h\}\cap
\{\moc{x-(1+t)\widehat u}\leq\rho\}$,
$$
\min\{\moc{\omega-\omega_0},\moc{\omega+\omega_0}\}
\leq\dfrac{2}{g}\moc{u(\omega)-u(\omega_0)}
\lesssim
\dfrac{h^3}{g}
\moc{\widehat u(\omega)-\widehat u(\omega_0)}
\lesssim\dfrac{h^3\rho}{(1+t)g}.
$$

Suppose first that $\moc{L}\geq \frac{E}{100h^2}$.
With $e=\dfrac{L}{\moc{L}}$, we have
$\moc{\widehat u_1-\widehat v_1}^{-\frac{r'}r}
\lesssim
\dfrac{h^{\frac{3r'}r}}
{g^{\frac{r'}r}\moc{e\cdot\omega}^{\frac{r'}r}}$.
Applying Lemma~\ref{lem:Local angular integration lemma}, we obtain
\begin{align}\label{gamma>0-2}
  \int_{\{u^0\leq h\}\cap \{\moc{x-(1+t)\widehat{u}}\leq \rho\}}\dfrac{d\omega}{\moc{\widehat{u}_1 - \widehat{v}_1}^{\frac{r'}{r}}}
  &\lesssim
  \dfrac{h^6}{g^2}
  \left(\dfrac{\rho}{1+t}\right)^{2-\frac{r'}r}.
\end{align}

If instead $\moc{L}\leq \frac{E}{100h^2}$, then
$\moc{L}\geq E\sqrt{1-\frac{P_1^2}{\moc{P}^2}}$
implies $\frac{|P_1|}{\moc{P}}\geq \dfrac{1}{2}$.
Writing
$L=E\left(e_1-\dfrac{P_1P}{\moc{P}^2}\right)
+\sqrt{\mathfrak{s}}\dfrac{P_1P}{\moc{P}^2}$,
we obtain
\begin{align*}
  \moc{L\cdot \omega}
  &\geq
  \sqrt{\mathfrak{s}}
  \dfrac{\moc{P_1}\moc{P\cdot \omega}}{\moc{P}^2}
  -E\sqrt{1-\dfrac{P_1^2}{\moc{P}^2}}
  \moc{\omega-\dfrac{P\cdot \omega}{\moc{P}^2}P}\\
  &\geq
  \dfrac{\sqrt{\mathfrak{s}}}{4}
  -\dfrac{2Eh}{100h^2g}\\
  &\geq
  \dfrac{\sqrt{\mathfrak{s}}}{4}
  -\dfrac{\mathfrak{s}}{50g}
  \geq\dfrac{\sqrt{\mathfrak{s}}}{10},
\end{align*}
where we used $\mathfrak{s}\leq5g^2$, which follows from $g\geq1$.
Together with $E\leq h\mathfrak{s}$, this gives
$$
\dfrac{1}
{\moc{\widehat u_1-\widehat v_1}^{\frac{r'}r}}
\lesssim
\left(\dfrac{hE}{g\sqrt{\mathfrak{s}}}\right)^{\frac{r'}r}
\lesssim h^{\frac{2r'}r}.
$$
Applying Lemma~\ref{lem:Local angular integration lemma}
with $\alpha=0$, we obtain
\begin{equation}\label{gamma>0-3}
  \begin{aligned}
  \int_{\{u^0\leq h\}\cap \{\moc{x-(1+t)\widehat{u}}\leq \rho\}}\dfrac{d\omega}{\moc{\widehat{u}_1 - \widehat{v}_1}^{\frac{r'}{r}}}
  &\lesssim
  h^{\frac{2r'}r}
  \int_{\{u^0\leq h\}\cap
  \{\moc{x-(1+t)\widehat u}\leq\rho\}}d\omega\\
  &\lesssim
  h^{\frac{2r'}r}
  \min\left\{
  \dfrac{h}{g},
  \dfrac{h^3\rho}{(1+t)g}
  \right\}^2
  \lesssim
  \dfrac{h^6}{g^2}
  \left(\dfrac{\rho}{1+t}\right)^{3-r'}.
\end{aligned}
\end{equation}
Here we also used
$\min\{\moc{\omega-\omega_0},\moc{\omega+\omega_0}\}
\lesssim \frac{h}{g}$.

Combining \eqref{gamma>0-2} and \eqref{gamma>0-3},
and taking $h=\min\{\xi,E/4\}$ in
\eqref{layer-cake representations}, we obtain
\begingroup\small
\begin{align*}
  \int_{u^0\leq E/4}(u^0)^{-n}S_u^{-\ell r'}
  &\dfrac{d\omega}
  {\moc{\widehat u_1-\widehat v_1}^{\frac{r'}r}}\\
  &\lesssim
  \int_{u_{\min}^0}^{\infty}\xi^{-(n+1)}d\xi
  \int_0^\infty
  \dfrac{\rho\,d\rho}
  {(1+\rho^2)^{\frac{\ell r'}2+1}}
  \cdot
  \int_{\{u^0\leq\min\{\xi,E/4\},
  \moc{x-(1+t)\widehat u}\leq\rho\}}
  \dfrac{d\omega}
  {\moc{\widehat u_1-\widehat v_1}^{\frac{r'}r}}\\
  &\lesssim
  \dfrac{1}{(1+t)^{3-r'}g^2}
  \int_{u_{\min}^0}^{\infty}
  \xi^{-n-1}\left[\min\left\{\xi,\dfrac E4\right\}\right]^6d\xi
  \int_0^\infty
  \dfrac{\rho^{4-r'}}
  {(1+\rho^2)^{\frac{\ell r'}2+1}}d\rho\\
  &\lesssim
  \dfrac{(u_{\min}^0)^{-n+6}}
  {(1+t)^{3-r'}g^2}.
\end{align*}
\endgroup

It remains to consider $E/4\leq u^0\leq E/2$.
Arguing as in \eqref{formu:I-gain-3}, we obtain
\begin{align*}
\notag
\int_{E/4\leq u^0\leq E/2}
(u^0)^{-n}S_u^{-\ell r'}
\dfrac{d\omega}
{\moc{\widehat u_1-\widehat v_1}^{\frac{r'}r}}
&\lesssim
E^{-n}
\dfrac{E^{\frac{2r'}r}}
{(g\sqrt{\mathfrak{s}})^{\frac{r'}r}}
\int_{\mathbb S^2}
S_u^{-\ell r'}
\dfrac{d\omega}{\moc{e\cdot\omega}^{\frac{r'}r}}\\
&\lesssim
E^{-n}
\dfrac{E^{\frac{2r'}r}}
{(g\sqrt{\mathfrak{s}})^{\frac{r'}r}}
\dfrac{E^{2(3-r')}}
{(1+t)^{3-r'}(g\sqrt{\mathfrak{s}})^{3-r'}}\\
\notag
&\lesssim
\dfrac{(u_{\min}^0)^{-n+4}}
{(1+t)^{3-r'}g^2}.
\end{align*}

Since $u_{\min}^0\geq1$, the two bounds give
\eqref{equ:key estimate-gamma>0}.
This completes the proof.
\end{proof}
It remains to combine the angular bounds with integration over the partner momentum. We retain the energy and relative-momentum decompositions explicitly, since they compensate for the velocity degeneracy in the angular estimates.

\begin{proposition}[Decay of the gain factor]
\label{decay of I-gain}
Under the parameter assumptions \eqref{equ:gamma-r-1},
\eqref{equ:gamma-r-2}, and \eqref{equ:vare}, the gain factor
in \eqref{I-gain} satisfies, uniformly for $t\geq0$ and $x,p\in\mathbb R^3$,
\[
 I_{\mathrm{gain}}^{r'}(t,x,p)
 \lesssim(1+t)^{-(1+\varepsilon)}.
\]
\end{proposition}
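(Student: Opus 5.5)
The plan is to reduce $I_{\mathrm{gain}}^{r'}$ to the angular integrals of Propositions~\ref{key estimate-1} and~\ref{key estimate-gamma>0}, and then integrate the resulting bounds in $q$ using the energy weights. First, the weight ratio \eqref{Weighted -Estimate} gives
\[
\frac{W_p^{r'}}{W_{p'}^{r'}W_{q'}^{r'}}\lesssim (u^0)^{\ell r'}\Bigl(\frac{p^0}{u^0v^0}\Bigr)^{mr'}\bigl(S_u^{-\ell r'}+S_v^{-\ell r'}\bigr).
\]
Since $v^0\geq E/2\geq p^0/2$, the factor $(p^0/v^0)^{mr'}$ is bounded, and we are left with $(u^0)^{-(m-\ell)r'}$. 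Next, the identity $|\widehat p'_1-\widehat q'_1| = g|L\cdot\omega|/(2p'^0q'^0)$ with $e=L/|L|$ and $|L|\geq\sqrt{\mathfrak s}$ gives
\[
|\widehat u_1-\widehat v_1|^{-r'/r}\lesssim \Bigl(\frac{u^0E}{g\sqrt{\mathfrak s}}\Bigr)^{r'/r}|e\cdot\omega|^{-r'/r}.
\]
Combined with the kernel bound on $B$ (which contributes $g^{\gamma}\sqrt{\mathfrak s}/(p^0q^0)$, raised to the power $r'$ when $\gamma<0$), this puts the $\omega$-integral in the form $\int (u^0)^{-n}S_{u/v}^{-\ell r'}|e\cdot\omega|^{-r'/r}\,d\omega$ with $n=(m-\ell)r'-r'/r$. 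The condition \eqref{equ:gamma-r-2} on $m$ makes $n$ large, so both $n>3-r'$ and $n>6$ hold.

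For $\gamma\in(-2,0)$, I would apply \eqref{key-estimate-u} and \eqref{key-estimate-v}. Each gives the decay $(1+t)^{-(1+\varepsilon)}$ times
\[
g^{\gamma r'-r'/r-1-\varepsilon}\,\mathfrak s^{(\cdot)}E^{(\cdot)}(p^0q^0)^{-r'+r'/r}(u^0_{\min})^{-n+O(1)}.
\]
It then remains to show that the $q$-integral of this expression is bounded uniformly in $p$. Two facts do the work. First, $u^0_{\min}\geq E/\mathfrak s$ together with the large negative power of $u^0_{\min}$ absorbs the positive powers of $E$, at least when $\mathfrak s$ is bounded. Second, the factor $(q^0)^{-r'}$ handles large $q$. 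The dangerous region is small $g$. There I would use $g^2\sim |p-q|^2/(p^0q^0)$ for comparable momenta and pass to the local variable $q-p$, so that the integrability needs $\gamma r'-r'/r-1-\varepsilon>-3$, i.e.\ $3+(\gamma-1)r'>\varepsilon$. This is exactly one of the restrictions in \eqref{equ:vare}, and $r>3/(\gamma+2)$ is used here. When $g$ is large, $\mathfrak s\sim g^2$ and the powers of $g$, $\mathfrak s$, $E$ must combine into a net decay in $q^0$. I would verify this by splitting into the regimes $q^0\leq p^0$ and $q^0\geq p^0$, using $\mathfrak s\lesssim p^0q^0$ and $E\lesssim \max\{p^0,q^0\}$.

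For $\gamma\in[0,1]$, the kernel enters only to the first power and carries no $(p^0q^0)^{r'/r}$ factor, so I would split according to $g\leq1$ and $g\geq1$. For $g\geq1$, I would use Proposition~\ref{key estimate-gamma>0} directly. It yields
\[
(1+t)^{-(3-r')}g^{\gamma-1+1-2}\sqrt{\mathfrak s}/(p^0q^0)\cdot(u^0_{\min})^{-n+6},
\]
and $3-r'>1+\varepsilon$ since $\varepsilon<2-r'$. The $q$-integral converges because $\sqrt{\mathfrak s}\,g^{\gamma-2}\lesssim g^{\gamma-1}\lesssim 1$ and $1/(p^0q^0)$ times a power of $u^0_{\min}\geq E/\mathfrak s$ decays; the $S_v$ term is handled by the crude bound \eqref{formu:I-gain-3}. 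For $g\leq1$, $\mathfrak s$ is bounded and $p^0\sim q^0$, so I would again use Proposition~\ref{key estimate-1}; the singularity $g^{\gamma-r'/r-1-\varepsilon}$ is locally integrable since $r'<2$ and $\varepsilon$ is small.

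The main obstacle I expect is the bookkeeping of powers of $E$, $\mathfrak s$, $p^0$ and $q^0$ in the regime $g\gg1$ with $q^0\gg p^0$, uniformly in $p$. There $u^0_{\min}$ may be close to $1$, so the lower bound $u^0_{\min}\geq E/\mathfrak s$ gives little, and the positive powers $E^{(2+r'+5\varepsilon)/2}$ from \eqref{key-estimate-v} must instead be beaten by $(q^0)^{-r'}$ together with the weight $(p^0)^{m}$. If this fails, I would keep more of the weight by not discarding $(p^0/v^0)^{mr'}$ and use $v^0\sim q^0$ in that regime.
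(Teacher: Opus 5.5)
Your overall architecture matches the paper's: reduce to the angular bounds of Propositions~\ref{key estimate-1} and~\ref{key estimate-gamma>0}, split at $g=1$, and obtain the small-$g$ condition $\varepsilon<3+(\gamma-1)r'$. There is nevertheless a genuine gap, located in the two simplifications you make before integrating in $q$. First, discarding $(p^0/v^0)^{mr'}$ is fatal, not merely lossy, because it removes the only decay in $q$ that the weights supply. Take $t=0$, $x=0$, $p=0$. Then $u^0_{\min}=1$, $E\sim\mathfrak s\sim g^2\sim q^0$, and $S_v\lesssim1$. On the cap $\{u^0\le2\}$, which has measure $\sim1/E$, your majorant for $\gamma<0$ is $\gtrsim (q^0)^{\gamma r'/2+r'/2-1}$. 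Its $q$-integral therefore dominates $\int(q^0)^{\gamma r'/2+r'/2-2}\,dq$, which is finite only if $(\gamma+1)r'<-2$, and this never holds. For $\gamma\ge0$, the output of Proposition~\ref{key estimate-gamma>0} with the weight dropped is $\sim g^{\gamma-1}/q^0\sim(q^0)^{(\gamma-3)/2}$ at $p=0$, which is again not integrable. Both of your claims, that ``$(q^0)^{-r'}$ handles large $q$'' and that ``$1/(p^0q^0)$ times a power of $u^0_{\min}$ decays'', are false. After the compensating factor $(p^0q^0)^{r'/r}$ only $(p^0q^0)^{-1}$ survives, and $u^0_{\min}\ge E/\mathfrak s$ gives nothing when $\mathfrak s\sim E$. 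Your fallback is therefore mandatory, and it is exactly what \eqref{I-gain-es-2} does: keep $(p^0)^{mr'}$ and use $v^0\ge E/2$.

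Second, and independently (you do not anticipate this), the crude comparison \eqref{Weighted -Estimate} is too weak even after the weight is restored. Consider the region $q^0\sim p^0\gg1$, $g^2\sim p^0$, a set of $q$ of volume $\sim(p^0)^2$. There $u^0_{\min}\sim1$ and $v^0\sim p^0$, so the energy weights give nothing. The term $(u^0)^{\ell r'}S_v^{-\ell r'}$, estimated by \eqref{key-estimate-v}, then contributes a term of order $(1+t)^{-(1+\varepsilon)}(p^0)^{(\gamma/2+1)r'+3\varepsilon/2}$, which is unbounded in $p$. The bound \eqref{formu:I-gain-3} that you propose for $\gamma\ge0$ is worse still, because of the factor $E^{6-2r'}$.

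The missing ingredient is the sharper first inequality of Lemma~\ref{lem:Weighted -Estimate}:
\[
 S_p/(S_uS_v)\lesssim S_u^{-1}+u^0\sqrt{v^0/(\mathfrak sp^0)}\,(S_u^{-1}+S_v^{-1}).
\]
Its first term carries no $u^0$-loss, so it can go into \eqref{key-estimate-u}, and for $\gamma\ge0$ into Proposition~\ref{key estimate-gamma>0} with $n=mr'>6$. Your $n$ is $(m-\ell)r'$ or $(m-\ell)r'-r'/r$, and \eqref{equ:gamma-r-2} only guarantees $(m-\ell)r'>r'+2$, which can lie below $4$; so that proposition does not apply to your integrand. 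In the other two terms the $u^0$-loss is paid for by $(E/(g^2p^0))^{\ell r'/2}$. This supplies the factor $(p^0)^{-\ell r'/2}$ that makes those contributions bounded, and it is where the lower bound on $\ell$ enters.

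Finally, the integration over $g\ge1$ needs the polar coordinates $dq=gq^0|p|^{-1}\,dq^0\,dg\,d\varphi$ and a split at $g=\sqrt E$, using both $u^0_{\min}\gtrsim E/g^2$ and $u^0_{\min}\gtrsim g^2/E$. For $\gamma<0$ the lossless term then gives $(p^0)^{((\gamma+2)r'-2+\varepsilon)/2}$. This is bounded precisely because $r>2/(-\gamma)$ and $\varepsilon<|2-(\gamma+2)r'|$. Your sketch never invokes these two hypotheses, a sign that the regime of large $p^0$ with $g\ge1$ is not actually controlled.
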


\begin{proof}
We first consider $\gamma\in(-2,0)$.

Let $L=Ee_1-
\frac{P_1P}{E+\sqrt{\mathfrak s}}$ and $e=\frac{L}{|L|}$.
The direction $e$ depends only on $p,q$.
By \eqref{formu:estimate for p'q'}, we have
$$
\moc{\widehat p'_1-\widehat q'_1}
=
\dfrac{g\moc{L\cdot\omega}}{2p'^0q'^0},
$$
where $|L|^2=E^2-P_1^2\geq\mathfrak s$. Hence
$$
\frac{1}
{\moc{\widehat p'_1-\widehat q'_1}^{\frac{r'}r}}
\thicksim
\dfrac{(u^0v^0)^{\frac{r'}r}}
{g^{\frac{r'}r}\moc{L\cdot\omega}^{\frac{r'}r}}
\thicksim
\dfrac{(u^0v^0)^{\frac{r'}r}}
{g^{\frac{r'}r}\moc{L}^{\frac{r'}r}
\moc{e\cdot\omega}^{\frac{r'}r}}.
$$
Using \eqref{Weighted -Estimate},
$p^0\leq E\leq2v^0$, and
$\moc{L}\geq\mathfrak s^{1/2}$, we obtain
\begin{align}\label{I-gain-es-2}
\notag
I_{\mathrm{gain}}^{r'}
&\lesssim
\int_{\mathbb R^3\times\mathbb S^2}
\dfrac{
g^{\gamma r'}\mathfrak s^{\frac{r'}2}
W_p^{r'}(p^0q^0)^{\frac{r'}r}}
{(p^0q^0)^{r'}W_{p'}^{r'}W_{q'}^{r'}
\moc{\widehat p'_1-\widehat q'_1}^{\frac{r'}r}}
\,dq\,d\omega
\\
\notag
&\lesssim
\int_{\mathbb R^3\times\mathbb S^2}
\left(\dfrac{p^0}{u^0v^0}\right)^{mr'}
\left(\dfrac{S_p}{S_uS_v}\right)^{\ell r'}
\dfrac{
g^{\gamma r'}\mathfrak s^{\frac{r'}2}
(p^0q^0)^{\frac{r'}r}}
{(p^0q^0)^{r'}
\moc{\widehat p'_1-\widehat q'_1}^{\frac{r'}r}}
\,dq\,d\omega
\\
&\lesssim
\int_{\mathbb R^3\times\mathbb S^2}
\left(\dfrac{p^0}{u^0v^0}\right)^{mr'}
\left(\dfrac{S_p}{S_uS_v}\right)^{\ell r'}
\dfrac{
g^{\gamma r'}\mathfrak s^{\frac{r'}2}
(p^0q^0)^{\frac{r'}r}}
{(p^0q^0)^{r'}}
\dfrac{(u^0v^0)^{\frac{r'}r}}
{g^{\frac{r'}r}\moc{L}^{\frac{r'}r}
\moc{e\cdot\omega}^{\frac{r'}r}}
\,dq\,d\omega
\\
\notag
&\lesssim
\int_{\mathbb R^3\times\mathbb S^2}
\left(\dfrac{p^0}{u^0v^0}\right)^{mr'}
\left(\dfrac{S_p}{S_uS_v}\right)^{\ell r'}
\dfrac{
g^{r'(\gamma-\frac1r)}\mathfrak s^{\frac12}}
{p^0q^0}
\dfrac{(u^0v^0)^{\frac{r'}r}}
{\moc{e\cdot\omega}^{\frac{r'}r}}
\,dq\,d\omega
\\
\notag
&\lesssim
(p^0)^{mr'}
\int_{\mathbb R^3}
\dfrac{g^{(\gamma-1)r'+1}\sqrt{\mathfrak s}}
{p^0q^0}\,dq
\int_{\mathbb S^2}
\left(\dfrac{S_p}{S_uS_v}\right)^{\ell r'}
\dfrac{d\omega}
{(u^0v^0)^{(m-1)r'+1}
\moc{e\cdot\omega}^{\frac{r'}r}}.
\end{align}

Consider first $p^0\leq2$.
The argument of Proposition~\ref{key estimate-1} gives
$$
\int_{\mathbb S^2}
(S_u^{-\ell r'}+S_v^{-\ell r'})
\dfrac{d\omega}{\moc{e\cdot\omega}^{\frac{r'}r}}
\lesssim
\min\left\{
1,
\left(
\dfrac{E^2}{(1+t)g\sqrt{\mathfrak s}}
\right)^{3-r'}
\right\}.
$$
Since $1+\varepsilon<3-r'$, this implies
\begin{equation}\label{I-gain-es-ex1}
\int_{\mathbb S^2}
(S_u^{-\ell r'}+S_v^{-\ell r'})
\dfrac{d\omega}{\moc{e\cdot\omega}^{\frac{r'}r}}
\lesssim
\left(
\dfrac{E^2}{(1+t)g\sqrt{\mathfrak s}}
\right)^{1+\varepsilon}.
\end{equation}
Combining \eqref{Weighted -Estimate},
\eqref{I-gain-es-ex1}, and $E/2\leq v^0\leq E$, we obtain
\begin{align*}
\int_{\mathbb S^2}
(u^0v^0)^{-[(m-1)r'+1]}
&\left(\dfrac{S_p}{S_uS_v}\right)^{\ell r'}
\dfrac{d\omega}{\moc{e\cdot\omega}^{\frac{r'}r}}
\\
&\lesssim
\int_{\mathbb S^2}
(u^0)^{\ell r'-(m-1)r'-1}
E^{-(m-1)r'-1}
(S_u^{-\ell r'}+S_v^{-\ell r'})
\dfrac{d\omega}{\moc{e\cdot\omega}^{\frac{r'}r}}
\\
&\lesssim
E^{-(m-1)r'-1}
\left(
\dfrac{E^2}{(1+t)g\sqrt{\mathfrak s}}
\right)^{1+\varepsilon}.
\end{align*}
Substituting this bound into \eqref{I-gain-es-2}
and using $p^0\leq2$, we obtain
\begin{align}\label{I-gain-es-1}
\notag
I_{\mathrm{gain}}^{r'}
&\lesssim
(1+t)^{-(1+\varepsilon)}
\int_{\mathbb R^3}
E^{-(m-1)r'+1+2\varepsilon}
\dfrac{g^{(\gamma-1)r'-\varepsilon}}{q^0}\,dq
\\
&\lesssim
(1+t)^{-(1+\varepsilon)}
\left(
\int_{q^0\leq20}+\int_{q^0\geq20}
\right)
E^{-(m-1)r'+1+2\varepsilon}
\dfrac{g^{(\gamma-1)r'-\varepsilon}}{q^0}\,dq
\\
\notag
&:=
(1+t)^{-(1+\varepsilon)}(J_1+J_2).
\end{align}
For $J_1$, \eqref{formu:id-g} gives
$\moc{p-q}\lesssim g$, and hence
$$
J_1
\lesssim
\int_{q^0\leq20}g^{(\gamma-1)r'-\varepsilon}\,dq
\lesssim
\int_{\moc{q-p}\leq100}
\moc{p-q}^{(\gamma-1)r'-\varepsilon}\,dq
\lesssim1.
$$
For $J_2$, we use $q^0\thicksim E$ and $q^0\lesssim g^2$
to obtain
$$
J_2
\lesssim
\int_{q^0\geq20}
\dfrac{
E^{-(m-1)r'+1+2\varepsilon}
E^{\frac{(\gamma-1)r'-\varepsilon}{2}}
}{q^0}\,dq
\lesssim
\int_{q^0\geq20}
\langle q\rangle^{
-(m-1)r'+2\varepsilon+
\frac{(\gamma-1)r'-\varepsilon}{2}}
\,dq
\lesssim1.
$$
Thus \eqref{I-gain-es-1} imples
$I_{\mathrm{gain}}^{r'}\lesssim(1+t)^{-(1+\varepsilon)}$
when $p^0\leq2$.

For $p^0\geq2$, we split \eqref{I-gain-es-2} at $g=1$:
\begin{align*}
I_{\mathrm{gain}}^{r'}
&\lesssim
(p^0)^{mr'}
\int_{\mathbb R^3}
\dfrac{g^{(\gamma-1)r'+1}\sqrt{\mathfrak s}}
{p^0q^0}\,dq
\int_{\mathbb S^2}
\left(\dfrac{S_p}{S_uS_v}\right)^{\ell r'}
\dfrac{d\omega}
{(u^0v^0)^{(m-1)r'+1}
\moc{e\cdot\omega}^{\frac{r'}r}}
\\
&=
(p^0)^{mr'}
\left(\int_{g\leq1}+\int_{g\geq1}\right)
\dfrac{g^{(\gamma-1)r'+1}\sqrt{\mathfrak s}}
{p^0q^0}\,dq
\int_{\mathbb S^2}
\left(\dfrac{S_p}{S_uS_v}\right)^{\ell r'}
\dfrac{d\omega}
{(u^0v^0)^{(m-1)r'+1}
\moc{e\cdot\omega}^{\frac{r'}r}}
\\
&:=J_3+J_4.
\end{align*}
On the region $g\leq1$, \eqref{formu:id-g} implies
$$
\moc{p^0-q^0}^2
\leq\moc{p-q}^2
\leq g^2p^0q^0
\leq p^0q^0.
$$
Solving this inequality for $q^0/p^0$, we obtain
$$
1+\frac{g^2}{2}
-g\sqrt{1+\frac{g^2}{4}}
\leq\frac{q^0}{p^0}
\leq
1+\frac{g^2}{2}
+g\sqrt{1+\frac{g^2}{4}},
$$
thus $q^0\thicksim p^0\thicksim E$.
Since $g(u,v)=g(p,q)\leq1$, the same comparison gives
$u^0\thicksim v^0\thicksim q^0\thicksim p^0\thicksim E$.
By \eqref{Weighted -Estimate} and \eqref{I-gain-es-ex1},
\begin{align*}
\int_{\mathbb S^2}
\left(\dfrac{S_p}{S_uS_v}\right)^{\ell r'}
&\dfrac{d\omega}
{(u^0v^0)^{(m-1)r'+1}
\moc{e\cdot\omega}^{\frac{r'}r}}
\\
&\lesssim
(p^0)^{-2(m-1)r'-2}
\int_{\mathbb S^2}
(u^0)^{\ell r'}
(S_u^{-\ell r'}+S_v^{-\ell r'})
\dfrac{d\omega}{\moc{e\cdot\omega}^{\frac{r'}r}}
\\
&\lesssim
(p^0)^{-2(m-1)r'-2+\ell r'}
\int_{\mathbb S^2}
(S_u^{-\ell r'}+S_v^{-\ell r'})
\dfrac{d\omega}{\moc{e\cdot\omega}^{\frac{r'}r}}
\\
&\lesssim
(p^0)^{-2(m-1)r'-2+\ell r'}
\dfrac{E^{2(1+\varepsilon)}}
{(1+t)^{1+\varepsilon}
g^{1+\varepsilon}
\mathfrak s^{\frac{1+\varepsilon}{2}}}
\\
&\lesssim
(p^0)^{-2(m-1)r'-2+\ell r'+2(1+\varepsilon)}
\dfrac{1}
{(1+t)^{1+\varepsilon}
g^{1+\varepsilon}
\mathfrak s^{\frac{1+\varepsilon}{2}}}.
\end{align*}
Substitution into $J_3$ gives
$$
J_3\lesssim
(1+t)^{-(1+\varepsilon)}
(p^0)^{-(m-2-\ell)r'+2\varepsilon-1}
\int_{g\leq1}
g^{(\gamma-1)r'-\varepsilon}\frac{dq}{q^0}.
$$
To estimate the remaining integral, fix $p\neq0$ and use
spherical coordinates with polar axis $p/|p|$.
Let $\theta\in[0,\pi]$ be the angle between $p$ and $q$,
and let $\varphi\in[0,2\pi)$ be the azimuthal angle. Then
$$
dq=|q|^2\sin\theta\,d|q|\,d\theta\,d\varphi.
$$
The relations
$$g^2=2(p^0q^0-\moc p|q|\cos\theta-1),$$
and
$$g\,dg=p^0\,dq^0-\moc p\cos\theta\,d\moc{q}
+\moc p|q|\sin\theta\,d\theta,\ dq^0=|q|\,d|q|/q^0,$$ give
\begin{equation}\label{Substituting-polar}
dq=\dfrac{gq^0}{\moc p}\,dq^0\,dg\,d\varphi.
\end{equation}
Since $q^0\thicksim p^0\thicksim|p|$ on this region,
the bounds on $q^0/p^0$ yield
\begin{align*}
(1+t)^{1+\varepsilon}J_3
&\lesssim
(p^0)^{-(m-2-\ell)r'+2\varepsilon-1}
\int_{g\leq1}
\dfrac{g^{(\gamma-1)r'-\varepsilon}}{q^0}\,dq
\\
&\lesssim
(p^0)^{-(m-2-\ell)r'+2\varepsilon-1}
\int_{g\leq1}
\int_{p^0(1+g^2/2-g\sqrt{1+g^2/4})}
^{p^0(1+g^2/2+g\sqrt{1+g^2/4})}
\dfrac{g^{(\gamma-1)r'-\varepsilon+1}}{q^0}
\,dq^0\,dg
\\
&\lesssim
(p^0)^{-(m-2-\ell)r'+2\varepsilon-1}
\int_0^1
g\sqrt{1+g^2/4}\,
g^{(\gamma-1)r'-\varepsilon+1}\,dg
\\
&\lesssim
\int_0^1g^{(\gamma-1)r'-\varepsilon+2}\,dg
\lesssim1.
\end{align*}
Therefore $J_3\lesssim(1+t)^{-(1+\varepsilon)}$.

For $J_4$, we have $g\geq1$ and
$\sqrt{\mathfrak s}\thicksim g$.
Using \eqref{formu:Weighted -Estimate} and
Proposition~\ref{key estimate-1}, we obtain
\begin{align*}
\int_{\mathbb S^2}
\left(\dfrac{S_p}{S_uS_v}\right)^{\ell r'}
&\dfrac{d\omega}
{(u^0v^0)^{(m-1)r'+1}
\moc{e\cdot\omega}^{\frac{r'}r}}
\\
&\lesssim
E^{-[(m-1)r'+1]}
\int_{\mathbb S^2}
(u^0)^{-[(m-1)r'+1]}S_u^{-\ell r'}
\dfrac{d\omega}{\moc{e\cdot\omega}^{\frac{r'}r}}
\\
&\quad+
\dfrac{E^{-[(m-1-\frac\ell2)r'+1]}}
{g^{\ell r'}(p^0)^{\frac{\ell r'}2}}
\int_{\mathbb S^2}
(u^0)^{-(m-1-\ell)r'-1}S_u^{-\ell r'}
\dfrac{d\omega}{\moc{e\cdot\omega}^{\frac{r'}r}}
\\
&\quad+
\dfrac{E^{-[(m-1-\frac\ell2)r'+1]}}
{g^{\ell r'}(p^0)^{\frac{\ell r'}2}}
\int_{\mathbb S^2}
(u^0)^{-(m-1-\ell)r'-1}S_v^{-\ell r'}
\dfrac{d\omega}{\moc{e\cdot\omega}^{\frac{r'}r}}
\\
&\lesssim
(1+t)^{-(1+\varepsilon)}
\Bigg(
(u_{\min}^0)^{-(m-\frac12)r'+1+\frac\varepsilon2}
\dfrac{
E^{-(m-\frac32)r'+\frac{3\varepsilon}{2}-1}}
{g^{2(1+\varepsilon)}}
\\
&\qquad+
(u_{\min}^0)^{-(m-\frac12-\ell)r'+1+\frac\varepsilon2}
\dfrac{
E^{-(m-\frac32-\frac\ell2)r'
+\frac{3\varepsilon}{2}-1}}
{g^{\ell r'+2(1+\varepsilon)}
(p^0)^{\frac{\ell r'}2}}
\\
&\qquad+
(u_{\min}^0)^{-(m-\frac12-\ell)r'-\frac\varepsilon2}
\dfrac{
E^{-(m-\frac32-\frac\ell2)r'
+\frac{5\varepsilon}{2}}}
{g^{\ell r'+2(1+\varepsilon)}
(p^0)^{\frac{\ell r'}2}}
\Bigg).
\end{align*}
Substituting into $J_4$, we obtain
\begin{align*}
(1+t)^{1+\varepsilon}J_4
&\lesssim
\int_{g\geq1}
\dfrac{
(p^0)^{mr'-1}
g^{(\gamma-1)r'-2\varepsilon}
E^{-(m-\frac32)r'+\frac{3\varepsilon}{2}-1}}
{q^0}
(u_{\min}^0)^{-(m-\frac12)r'+1+\frac\varepsilon2}
\,dq
\\
&\quad+
\int_{g\geq1}
\dfrac{
(p^0)^{mr'-1-\frac{\ell r'}2}
g^{(\gamma-1)r'-2\varepsilon-\ell r'}
E^{-(m-\frac32-\frac\ell2)r'
+\frac{3\varepsilon}{2}-1}}
{q^0}
(u_{\min}^0)^{-(m-\frac12-\ell)r'+1+\frac\varepsilon2}
\,dq
\\
&\quad+
\int_{g\geq1}
\dfrac{
(p^0)^{mr'-1-\frac{\ell r'}2}
g^{(\gamma-1)r'-2\varepsilon-\ell r'}
E^{-(m-\frac32-\frac\ell2)r'
+\frac{5\varepsilon}{2}}}
{q^0}
(u_{\min}^0)^{-(m-\frac12-\ell)r'-\frac\varepsilon2}
\,dq
\\
&:=J_{4,1}+J_{4,2}+J_{4,3}.
\end{align*}

Since $u_{\min}^0\leq E$, we have
\begin{align*}
E^{-(m-\frac32-\frac\ell2)r'
+\frac{3\varepsilon}{2}-1}
(u_{\min}^0)^{-(m-\frac12-\ell)r'+1+\frac\varepsilon2}
&=
\left(\frac{u_{\min}^0}{E}\right)^{1+\varepsilon}
E^{-(m-\frac32-\frac\ell2)r'
+\frac{5\varepsilon}{2}}
(u_{\min}^0)^{-(m-\frac12-\ell)r'-\frac\varepsilon2}
\\
&\leq
E^{-(m-\frac32-\frac\ell2)r'
+\frac{5\varepsilon}{2}}
(u_{\min}^0)^{-(m-\frac12-\ell)r'-\frac\varepsilon2}.
\end{align*}
Hence $J_{4,2}\leq J_{4,3}$, and it suffices to estimate
$J_{4,1}$ and $J_{4,3}$.

Using \eqref{Substituting-polar} and the bounds
$u_{\min}^0\geq E/\mathfrak s\thicksim E/g^2$ and
$u_{\min}^0\geq C\mathfrak s/E\thicksim g^2/E$,
we split the $g$-integration at $g=\sqrt E$:
\begin{align*}
J_{4,1}
&\lesssim
\int_{g\geq1}
(p^0)^{mr'-2}
g^{(\gamma-1)r'-2\varepsilon+1}
E^{-(m-\frac32)r'-1+\frac{3\varepsilon}{2}}
(u_{\min}^0)^{-(m-\frac12)r'+1+\frac\varepsilon2}
\,dq^0\,dg
\\
&\lesssim
\int_0^\infty
E^{-(m-\frac32)r'+\frac{3\varepsilon}{2}-1}
(p^0)^{mr'-2}\,dq^0
\\
&\quad\cdot
\Bigg[
\int_1^{\sqrt E}
g^{(\gamma-1)r'-2\varepsilon+1}
\left(\dfrac E{g^2}\right)^{
-(m-\frac12)r'+1+\frac\varepsilon2}
\,dg
\\
&\qquad+
\int_{\sqrt E}^\infty
g^{(\gamma-1)r'-2\varepsilon+1}
\left(\dfrac{g^2}E\right)^{
-(m-\frac12)r'+1+\frac\varepsilon2}
\,dg
\Bigg]
:=K_1+K_2.
\end{align*}
Integrating in $g$ and rescaling $q^0$ by $p^0$, we obtain
\begin{align}\label{K1}
\notag K_1
&\lesssim
\int_0^\infty
E^{-(m-\frac32)r'+\frac{3\varepsilon}{2}-1}
(p^0)^{mr'-2}\,dq^0
\int_1^{\sqrt E}
g^{(\gamma-1)r'-2\varepsilon+1}
\left(\dfrac E{g^2}\right)^{
-(m-\frac12)r'+1+\frac\varepsilon2}
\,dg
\\
&\lesssim
\int_0^\infty
(p^0)^{mr'-2}
E^{(-m+\frac\gamma2+1)r'+\frac\varepsilon2}
\,dq^0
\\
\notag&\lesssim
(p^0)^{mr'-2}p^0
(p^0)^{(-m+\frac\gamma2+1)r'+\frac\varepsilon2}
\int_0^\infty
(1+q^0)^{(-m+\frac\gamma2+1)r'+\frac\varepsilon2}
\,dq^0
\\
\notag&\lesssim
(p^0)^{(\frac\gamma2+1)r'-1+\frac\varepsilon2}
\lesssim1.
\end{align}
Similarly,
\begin{align*}
K_2
&\lesssim
\int_0^\infty
E^{-(m-\frac32)r'+\frac{3\varepsilon}{2}-1}
(p^0)^{mr'-2}\,dq^0
\int_{\sqrt E}^\infty
g^{(\gamma-1)r'-2\varepsilon+1}
\left(\dfrac{g^2}E\right)^{
-(m-\frac12)r'+1+\frac\varepsilon2}
\,dg
\\
&\lesssim
\int_0^\infty
(p^0)^{mr'-2}
E^{(-m+\frac\gamma2+1)r'+\frac\varepsilon2}
\,dq^0
\\
&\lesssim
(p^0)^{mr'-2}p^0
(p^0)^{(-m+\frac\gamma2+1)r'+\frac\varepsilon2}
\int_0^\infty
(1+q^0)^{(-m+\frac\gamma2+1)r'+\frac\varepsilon2}
\,dq^0
\\
&\lesssim
(p^0)^{(\frac\gamma2+1)r'-1+\frac\varepsilon2}
\lesssim1.
\end{align*}
Thus $J_{4,1}\lesssim1$.

For $J_{4,3}$, the same decomposition gives
\begin{align*}
J_{4,3}
&\lesssim
\int_{g\geq1}
(p^0)^{mr'-2-\frac{\ell r'}2}
g^{(\gamma-1)r'-2\varepsilon-\ell r'+1}
E^{-(m-\frac32-\frac\ell2)r'+\frac{5\varepsilon}{2}}
(u_{\min}^0)^{-(m-\frac12-\ell)r'-\frac\varepsilon2}
\,dq^0\,dg
\\
&\lesssim
\int_0^\infty
E^{-(m-\frac32-\frac\ell2)r'+\frac{5\varepsilon}{2}}
(p^0)^{mr'-2-\frac{\ell r'}2}\,dq^0
\\
&\quad\cdot
\Bigg[
\int_1^{\sqrt E}
g^{(\gamma-1)r'-2\varepsilon-\ell r'+1}
\left(\dfrac E{g^2}\right)^{
-(m-\frac12-\ell)r'-\frac\varepsilon2}
\,dg
\\
&\qquad+
\int_{\sqrt E}^\infty
g^{(\gamma-1)r'-2\varepsilon-\ell r'+1}
\left(\dfrac{g^2}E\right)^{
-(m-\frac12-\ell)r'-\frac\varepsilon2}
\,dg
\Bigg]
:=K_3+K_4.
\end{align*}
Integrating as in \eqref{K1}, we obtain
\begin{align*}
K_3
&\lesssim
\int_0^\infty
E^{-(m-\frac32-\frac\ell2)r'+\frac{5\varepsilon}{2}}
(p^0)^{mr'-2-\frac{\ell r'}2}\,dq^0
\int_1^{\sqrt E}
g^{(\gamma-1)r'-2\varepsilon-\ell r'+1}
\left(\dfrac E{g^2}\right)^{
-(m-\frac12-\ell)r'-\frac\varepsilon2}
\,dg
\\
&\lesssim
\int_0^\infty
(p^0)^{mr'-2-\frac{\ell r'}2}
E^{(-m+\frac\gamma2+1)r'+1+\frac{3\varepsilon}{2}}
\,dq^0
\\
&\lesssim
(p^0)^{mr'-2-\frac{\ell r'}2}p^0
(p^0)^{(-m+\frac\gamma2+1)r'+1+\frac{3\varepsilon}{2}}
\int_0^\infty
(1+q^0)^{(-m+\frac\gamma2+1)r'+1+\frac{3\varepsilon}{2}}
\,dq^0
\\
&\lesssim
(p^0)^{(\frac\gamma2+1)r'
+\frac{3\varepsilon}{2}-\frac{\ell r'}2}
\lesssim1.
\end{align*}
Similarly,
\begin{align*}
K_4
&\lesssim
\int_0^\infty
E^{-(m-\frac32-\frac\ell2)r'+\frac{5\varepsilon}{2}}
(p^0)^{mr'-2-\frac{\ell r'}2}\,dq^0
\int_{\sqrt E}^\infty
g^{(\gamma-1)r'-2\varepsilon-\ell r'+1}
\left(\dfrac{g^2}E\right)^{
-(m-\frac12-\ell)r'-\frac\varepsilon2}
\,dg
\\
&\lesssim
\int_0^\infty
(p^0)^{mr'-2-\frac{\ell r'}2}
E^{(-m+\frac\gamma2+1)r'+1+\frac{3\varepsilon}{2}}
\,dq^0
\\
&\lesssim
(p^0)^{mr'-2-\frac{\ell r'}2}p^0
(p^0)^{(-m+\frac\gamma2+1)r'+1+\frac{3\varepsilon}{2}}
\int_0^\infty
(1+q^0)^{(-m+\frac\gamma2+1)r'+1+\frac{3\varepsilon}{2}}
\,dq^0
\\
&\lesssim
(p^0)^{(\frac\gamma2+1)r'
+\frac{3\varepsilon}{2}-\frac{\ell r'}2}
\lesssim1.
\end{align*}
Together with $J_{4,2}\leq J_{4,3}$, these bounds give
$J_4\lesssim(1+t)^{-(1+\varepsilon)}$.
This completes the proof for $\gamma\in(-2,0)$.

We next consider $\gamma\in[0,1]$.

After taking out the common factor $(1+t)^{-(1+\varepsilon)}$,
we use the same notation $J_1,J_2,J_3,J_4$ for the remaining
integrals. As in the soft-potential case,
$$
\begin{aligned}
J_1
&\lesssim
\int_{q^0\leq20}
\moc{p-q}^{\gamma-\frac{r'}r-1-\varepsilon}\,dq
\lesssim
\int_{\moc{p-q}\leq100}
\moc{p-q}^{\gamma-\frac{r'}r-1-\varepsilon}\,dq
\lesssim1,
\\[2ex]
J_2
&\lesssim
\int_{q^0\geq20}
\langle q\rangle^{
-(m-1)r'+\frac{\gamma-1}{2}
+\varepsilon-\frac{r'}r}
\,dq
\lesssim1,
\\[2ex]
J_3
&\lesssim
(p^0)^{-(m-2-\ell)r'+2\varepsilon-1}
\int_0^1
g\sqrt{1+\frac{g^2}{4}}\,
g^{\gamma-\frac{r'}r-\varepsilon}\,dg
\lesssim1.
\end{aligned}
$$
Here $J_1$ and $J_2$ correspond to $p^0\leq2$, with
$q^0\leq20$ and $q^0\geq20$, respectively, while $J_3$
corresponds to $p^0\geq2$ and $g\leq1$.

It remains to estimate $J_4$, where $p^0\geq2$ and $g\geq1$.
We use the same decomposition
$J_4\lesssim J_{4,1}+J_{4,2}+J_{4,3}$ as in the soft-potential case.
For $J_{4,2}$ and $J_{4,3}$, we repeat the preceding estimates
with the corresponding powers of $g$ and $\sqrt{\mathfrak s}$.
This gives
$$
J_{4,2}\lesssim J_{4,3}
\lesssim
(p^0)^{
r'(\frac12-\frac\ell2)
+\frac{\gamma+1}{2}+\frac{3\varepsilon}{2}}
\lesssim1.
$$

For $J_{4,1}$, we retain the dependence on $L$ by keeping
$\moc{\widehat u_1-\widehat v_1}^{-r'/r}$,
instead of using $|L|\geq\sqrt{\mathfrak s}$ as before.
We then apply Proposition~\ref{key estimate-gamma>0} directly.
Since $v^0\thicksim E$, we have
$$
\begin{aligned}
J_{4,1}
\lesssim{}&
(1+t)^{1+\varepsilon}(p^0)^{mr'-1}
\int_{\{g\geq1\}}
\frac{g^\gamma\sqrt{\mathfrak s}}{q^0E^{mr'}}
\int_{\mathbb S^2}
(u^0)^{-mr'}S_u^{-\ell r'}
\frac{d\omega}
{\moc{\widehat u_1-\widehat v_1}^{r'/r}}
\,dq.
\end{aligned}
$$
Applying \eqref{equ:key estimate-gamma>0} with $n=mr'$ gives
$$
J_{4,1}
\lesssim
(1+t)^{-(2-r'-\varepsilon)}
(p^0)^{mr'-1}
\int_{\{g\geq1\}}
\frac{g^{\gamma-2}\sqrt{\mathfrak s}}{q^0E^{mr'}}
(u_{\min}^0)^{-mr'+6}\,dq.
$$
Using \eqref{Substituting-polar},
$\sqrt{\mathfrak s}\thicksim g$, and the bounds for $u_{\min}^0$,
we again split the $g$-integration at $g=\sqrt E$:
\begin{align*}
J_{4,1}
\lesssim{}&
(1+t)^{-(2-r'-\varepsilon)}
(p^0)^{mr'-2}
\int_0^\infty E^{-mr'}
\Bigg[
E^{-mr'+6}
\int_1^{\sqrt E}g^{2mr'+\gamma-12}\,dg
\\
&\hspace{43mm}
+
E^{mr'-6}
\int_{\sqrt E}^\infty
g^{\gamma+12-2mr'}\,dg
\Bigg]\,dq^0
\\
\lesssim{}&
(1+t)^{-(2-r'-\varepsilon)}
(p^0)^{mr'-2}
\int_0^\infty
E^{-mr'+\frac{\gamma+1}{2}}\,dq^0
\\
\lesssim{}&
(1+t)^{-(2-r'-\varepsilon)}
(p^0)^{\frac{\gamma-1}{2}}
\lesssim1,
\end{align*}
where the last inequality uses $\varepsilon<2-r'$ and
$0\leq\gamma\leq1$.

Combining these estimates yields
$$
I_{\mathrm{gain}}^{r'}
\lesssim(1+t)^{-(1+\varepsilon)}.
$$
This completes the proof.
\end{proof}
    \begin{remark}
The critical exponents in $J_1$, $J_3$, and $J_{4,1}$ are
independent of $m$ and $\ell$, so stronger weights cannot
relax the resulting restrictions on $\gamma$ and $r$.
For $J_{4,1}$, the kernel decay compensates for the loss
in $g$ from Proposition~\ref{key estimate-1} when $\gamma<0$.
When $\gamma\geq0$, this compensation fails, and the refined
estimate in Proposition~\ref{key estimate-gamma>0} is needed
to retain the decay factor of $g$.
\end{remark}

\section{Nonlinear estimates}\label{sec:collision-bound}
Using the gain and loss bounds from Section~\ref{sec:decay-estimates},
we establish a bilinear estimate for the collision operator in the
weighted Besov space. This estimate controls the nonlinear term by
the bilinear dissipation functional and provides the key bound for
closing the energy estimates.
 \begin{proposition}\label{prop:non-esti}
    Suppose that $\gamma,r$ satisfy \eqref{equ:gamma-r-1}. For every $T>0$ and functions $f,g$,
we have
   \begin{equation}\label{equ:non-esti}
     \int_{0}^{T} \sum_{j\geq - 1} 2^{\frac{2j}{r}}  \norm{W_p\Delta_j Q(f,g)}_{L^r_{p,x}}dt \lesssim \mathscr{D}_T(f,g).
   \end{equation}
   In particular,
   \begin{equation*}
       \int_{0}^{T} \sum_{j\geq - 1} 2^{\frac{2j}{r}}  \norm{W_p\Delta_j Q(f,f)}_{L^r_{p,x}}dt \lesssim \mathscr{D}_T(f).
   \end{equation*}
 \end{proposition}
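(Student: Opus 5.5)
We split $Q(f,g)=Q^+(f,g)-Q^-(f,g)$ and treat the two terms separately. For each we first prove a frequency-free pointwise bound, and then insert the Littlewood--Paley structure in $y$ by Bony's decomposition. Since $B$ and the collision map act only in $p,q,\omega$, while $\Delta_j$ acts only in $y$, $\Delta_j$ commutes with everything except the product $f(p')g(q')$ (resp.\ $f(p)g(q)$) in the variable $y$. We therefore write
\[
f\,g=\sum_{k,k'}\Delta_kf\,\Delta_{k'}g
\]
and split it into $T_fg+T_gf+R(f,g)$ as usual. We then use the weighted Bernstein inequality and the embedding \eqref{Embedding inequailty} to reduce
\[
2^{2j/r}\bigl\|W_p\Delta_j(\cdots)\bigr\|_{L^r_y}
\]
to sums of the form
\[
\sum_{k,k'}2^{2(k+k')/r}\,\|W\Delta_kf\|_{L^r_y}\,\|W\Delta_{k'}g\|_{L^r_y}
\]
with summable coefficients. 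For the paraproduct pieces we place the low-frequency factor in $L^\infty_y$ via \eqref{Embedding inequailty} and use that $2^{2j/r}\|\Delta_j(S_{k-1}a\,\Delta_kb)\|_{L^r_y}$ is nonzero only for $|j-k|\le 4$. For the remainder we use Bernstein $L^{r/2}_y\to L^r_y$, which gains exactly $2^{2j/r}$, and $s=2/r>0$ makes the sum over $j\le k+2$ converge. After this step it suffices to prove the estimate with $\Delta_jf,\Delta_kg$ replaced by arbitrary functions $F,G$ of $(t,\eta,p)$ valued in $L^r_y$, measured by their $L^r_y$-norms.

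For the loss term we have $Q^-(F,G)=F(p)\int B\,G(q)\,dq\,d\omega$. We insert the factor
\[
|\widehat p_1-\widehat q_1|^{1/r}W_q\;\cdot\;|\widehat p_1-\widehat q_1|^{-1/r}W_q^{-1}
\]
and apply H\"older in $(q,\omega)$ with exponents $(r,r')$, which gives
\[
W_p|Q^-|\le I_{\mathrm{loss}}\Bigl(\int |\widehat p_1-\widehat q_1|\,W_p^r|F(p)|^r\,W_q^r|G(q)|^r\,dq\,d\omega\Bigr)^{1/r}.
\]
Taking the $L^r_{p,x}$ norm and then integrating in $t$, we apply H\"older in time. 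Proposition~\ref{prop:Estimate for I-loss} gives $I_{\mathrm{loss}}\in L^{r}_t$, and more to the point in $L^{r'}_t$, since the exponent satisfies
\[
3+\min\{\gamma,0\}r'-r'/r>1
\]
under \eqref{equ:gamma-r-1}. The result is bounded by the $L^r_{t,p,q,\eta}$ quantity defining $\mathscr D_T$. One point to check is that the $L^1_t$ integral factors correctly: $\|I_{\mathrm{loss}}\|_{L^{r'}_t}\cdot\|\cdots\|_{L^r_t}$.

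The gain term is the main step. Here we use the weight $W_{p'}W_{q'}$ and the outgoing factor $|\widehat p'_1-\widehat q'_1|^{1/r}$, and apply H\"older in $(q,\omega)$. For $\gamma\in[0,1]$ we use the measure $B\,dq\,d\omega$, which yields
\[
W_p|Q^+|\le I_{\mathrm{gain}}\Bigl(\int B\,|\widehat p'_1-\widehat q'_1|\,|W_{p'}F(p')|^r\,|W_{q'}G(q')|^r\Bigr)^{1/r}.
\]
For $\gamma<0$ we use Lebesgue measure with the factor $(p^0q^0)^{1/r}$, in the manner of the definition \eqref{I-gain}. Proposition~\ref{decay of I-gain} gives
\[
I_{\mathrm{gain}}^{r'}\lesssim(1+t)^{-(1+\varepsilon)},
\]
which is integrable in time after H\"older. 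The remaining quantity is then integrated in $p$, and the hard part is the exchange $(p,q)\to(p',q')$. We will use the invariance of the measure
\[
\frac{dp\,dq}{p^0q^0}\,\frac{g\sqrt{\mathfrak s}}{\cdots}\,d\omega
\]
under the pre/post-collisional change of variables, that is, invariance of the mass-shell measure (recorded in the appendix). We also need the leftover energy factors $p^0q^0$ versus $p'^0q'^0$ to exactly compensate; this explains the extra $(p^0q^0)^{r'/r}$ built into $I_{\mathrm{gain}}$ for $\gamma<0$. After the exchange, $\int|\widehat p'_1-\widehat q'_1|\,|W_{p'}F(p')|^r|W_{q'}G(q')|^r$ becomes $\int|\widehat p_1-\widehat q_1|\,|W_pF|^r|W_qG|^r$ times $\int_{\S^2}\sigma_0\,d\omega\lesssim1$, possibly after bounding $B$ by its supremum in $\omega$ (since $g,\mathfrak s$ are invariant). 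We expect verifying that the powers of $g$ and the energies match exactly under this exchange, without losing a factor depending on $p^0$, to be the most delicate point. Summing the dyadic pieces then gives $\lesssim\mathscr D_T(F,G)$ and hence \eqref{equ:non-esti}.
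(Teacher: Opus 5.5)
Your proposal is correct and is essentially the paper's proof. It uses the gain/loss split and H\"older in $(q,\omega)$ against $I_{\mathrm{loss}}$ and $I_{\mathrm{gain}}$, with the $(p^0q^0)^{1/r}$ device for $\gamma<0$ and a retained $B^{1/r}$ for $\gamma\in[0,1]$. It then applies H\"older in time via Propositions~\ref{prop:Estimate for I-loss} and~\ref{decay of I-gain}, the pre/post-collisional exchange, and finally Bony's decomposition in $y$ at fixed $(p,q)$ with the weighted embedding \eqref{Embedding inequailty}.

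The differences are minor. The paper handles the remainder by putting one factor in $L^\infty_y$ rather than by Bernstein $L^{r/2}_y\to L^r_y$; both work since $r>2$ and $2/r>0$. The energy bookkeeping you flag as delicate is just $\frac{dp\,dq\,d\omega}{p^0q^0}=\frac{dp'\,dq'\,d\nu}{p'^0q'^0}$ followed by $1/(p'^0q'^0)\le 1$, so no exact compensation is needed.
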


 \begin{proof}
We write
    \begin{equation}\label{equ:non-esti-1}
    \begin{aligned}
      \int_{0}^{T} &\sum_{j\geq - 1} 2^{\frac{2j}{r}}   \norm{W_p\Delta_j Q(f,g)}_{L^r_{p,x}}dt\\&\lesssim \int_{0}^{T} \sum_{j\geq - 1}2^{\frac{2j}{r}}   \norm{W_p\Delta_j Q^{+}(f,g)}_{L^r_{p,x}}dt+\int_{0}^{T} \sum_{j\geq - 1} 2^{\frac{2j}{r}}   \norm{W_p\Delta_j Q^{-}(f,g)}_{L^r_{p,x}}dt\\
      &:=I_1 + I_2.
    \end{aligned}
    \end{equation}
We estimate $I_1$ and $I_2$ in \eqref{equ:non-esti-1} separately,
reducing both terms to the same product estimate.

We first estimate $I_1$.
For $\gamma \in (-2,0),$ we introduce compensating powers of
$(p^0q^0)^{\frac{1}{r}}$ in H\"older's inequality to use the invariant collision measure.
The pre-post collisional change of variables
$$ \dfrac{dpdqd\omega}{p^0q^0} = \dfrac{dp'dq'd\nu}{p'^0q'^0},$$
together with the decay estimate for $I_{\mathrm{gain}}$,  yields
\begingroup\small
\begin{align*}
              I_1=&\int_{0}^{T}\sum_{j\geq-1}2^{\frac{2j}{r}} \norm{W_p \int_{\mathbb{R}^3 \times \mathbb{S}^2} B(p,q,\omega)\Delta_j(f(p')g(q'))dqd\omega}_{L_{p,x}^r}dt\\
                 =&\int_{0}^{T}\sum_{j\geq-1}2^{\frac{2j}{r}} \norm{\int_{\mathbb{R}^3 \times \mathbb{S}^2}\dfrac{(p^0q^0)^{\frac{1}{r}} W_p B(p,q,\omega)}{W_{p'}W_{q'}\moc{\widehat p'_1 - \widehat q'_1}^{\frac{1}{r}}} \moc{\widehat p'_1 - \widehat q'_1}^{\frac{1}{r}}W_{p'}W_{q'}\Delta_j(f(p')g(q'))\dfrac{dqd\omega}{(p^0q^0)^{\frac{1}{r}}}}_{L_{p,x}^r}dt\notag \\
                 \lesssim& \int_{0}^{T}\sum_{j\geq-1}2^{\frac{2j}{r}}  \norm{I_{\mathrm{gain}}\left(\int_{\mathbb{R}^3\times \mathbb{S}^2} \moc{\widehat p'_1 - \widehat q'_1}  W^r_{p'}W^r_{q'}    \moc{\Delta_j(f(p')g(q'))}^{r}\dfrac{dqd\omega}{p^0q^0}\right)^{\frac{1}{r}} }_{L_{p,x}^r}dt\notag \\
                 \lesssim& \int_{0}^{T} \sum_{j\geq-1}2^{\frac{2j}{r}} (1+t)^{-\frac{1+\varepsilon}{r'}}\left( \int_{\mathbb{R}^3\times \mathbb{S}^2 \times \mathbb{R}^6} \moc{\widehat p'_1 - \widehat q'_1}  W^r_{p'}W^r_{q'}    \moc{\Delta_j(f(p')g(q'))}^{r}\dfrac{dpdqd\omega }{p^0q^0} dx  \right)^{\frac{1}{r}}\notag \\
                \lesssim&   \sum_{j\geq-1}2^{\frac{2j}{r}}\left(\int_{0}^{T} dt\int_{\mathbb{R}^3\times \mathbb{S}^2 \times \mathbb{R}^6} \moc{\widehat p'_1 - \widehat q'_1}  W^r_{p'}W^r_{q'}    \moc{\Delta_j(f(p')g(q'))}^{r}\dfrac{dpdqd\omega }{p^0q^0} dx  \right)^{\frac{1}{r}}\notag \\
                \lesssim&   \sum_{j\geq-1}2^{\frac{2j}{r}}\left(\int_{0}^{T} dt\int_{\mathbb{R}^3\times \mathbb{S}^2\times  \mathbb{R}^6} \moc{\widehat p'_1 - \widehat q'_1}  W^r_{p'}W^r_{q'}    \moc{\Delta_j(f(p')g(q'))}^{r}\dfrac{dp'dq'd\nu }{p'^0q'^0} dx  \right)^{\frac{1}{r}}\notag \\
                \lesssim& \sum_{j\geq-1}2^{\frac{2j}{r}}\norm{ \moc{\widehat{p}_1 - \widehat{q}_1}^{\frac{1}{r}} W_pW_q\Delta_j(f(p)g(q))}_{L_{t,p,q,x}^{r}},
                 \end{align*}
\endgroup
                 where $I_{\mathrm{gain}}$ is defined in \eqref{I-gain} and satisfies the decay bound in Proposition~\ref{decay of I-gain} of Section~\ref{sec:decay-estimates}.

 If $\gamma\in [0,1]$, we retain a factor of
$B(p,q,\omega)^{\frac{1}{r}}$ in the second H\"older inequality to use
the collision-measure identity
$$ B(p,q,\omega) dpdqd\omega = B(p',q',\nu) dp'dq'd\nu,$$
together with the boundedness of $B$, this gives the following estimate.
In the display, write $B=B(p,q,\omega)$ and $B'=B(p',q',\nu)$:
    \begingroup\small
\begin{align*}
              I_1=&\int_{0}^{T}\sum_{j\geq-1}2^{\frac{2j}{r}} \norm{W_p \int_{\mathbb{R}^3 \times \mathbb{S}^2} B\Delta_j(f(p')g(q'))dqd\omega}_{L_{p,x}^r}dt\\
                 =&\int_{0}^{T}\sum_{j\geq-1}2^{\frac{2j}{r}} \norm{\int_{\mathbb{R}^3 \times \mathbb{S}^2}\dfrac{B^{\frac{1}{r'}} W_p }{W_{p'}W_{q'}\moc{\widehat p'_1 - \widehat q'_1}^{\frac{1}{r}}} B^{\frac{1}{r}}\moc{\widehat p'_1 - \widehat q'_1}^{\frac{1}{r}}W_{p'}W_{q'}\Delta_j(f(p')g(q'))dqd\omega}_{L_{p,x}^r}dt\notag \\
                 \lesssim& \int_{0}^{T}\sum_{j\geq-1}2^{\frac{2j}{r}}  \norm{I_{\mathrm{gain}}\left(\int_{\mathbb{R}^3\times \mathbb{S}^2}B \moc{\widehat p'_1 - \widehat q'_1}  W^r_{p'}W^r_{q'}    \moc{\Delta_j(f(p')g(q'))}^{r}dqd\omega\right)^{\frac{1}{r}} }_{L_{p,x}^r}dt\notag \\
                 \lesssim& \int_{0}^{T} \sum_{j\geq-1}2^{\frac{2j}{r}} (1+t)^{-\frac{1+\varepsilon}{r'}}\left( \int_{\mathbb{R}^3\times \mathbb{S}^2 \times \mathbb{R}^6} B \moc{\widehat p'_1 - \widehat q'_1}  W^r_{p'}W^r_{q'}    \moc{\Delta_j(f(p')g(q'))}^{r} dpdqd\omega  dx  \right)^{\frac{1}{r}}\notag \\
                \lesssim&   \sum_{j\geq-1}2^{\frac{2j}{r}}\left(\int_{0}^{T} dt\int_{\mathbb{R}^3\times \mathbb{S}^2 \times \mathbb{R}^6} B'\moc{\widehat p'_1 - \widehat q'_1}  W^r_{p'}W^r_{q'}    \moc{\Delta_j(f(p')g(q'))}^{r} dp'dq'd\nu dx  \right)^{\frac{1}{r}}\notag \\
                \lesssim&   \sum_{j\geq-1}2^{\frac{2j}{r}}\left(\int_{0}^{T} dt\int_{\mathbb{R}^3\times \mathbb{S}^2\times  \mathbb{R}^6} \moc{\widehat p'_1 - \widehat q'_1}  W^r_{p'}W^r_{q'}    \moc{\Delta_j(f(p')g(q'))}^{r} dp'dq'd\nu   dx  \right)^{\frac{1}{r}}\notag \\
                \lesssim& \sum_{j\geq-1}2^{\frac{2j}{r}}\norm{ \moc{\widehat{p}_1 - \widehat{q}_1}^{\frac{1}{r}} W_pW_q\Delta_j(f(p)g(q))}_{L_{t,p,q,x}^{r}}.
                 \end{align*}
\endgroup

 Apply Bony's decomposition in variables $y$:
            $$fg=\sum_{i\geq -1} S_{i-1} f\Delta_ig +\sum_{i\geq -1} S_{i-1} g\Delta_if + \sum_{\moc{i-i'}\leq 1} \Delta_i f\Delta_{i'}g,$$
 thus for all $\gamma \in (-2,1]$, we obtain
\begin{align}\label{equ:non-esti-gain-1}
              I_1\lesssim& \sum_{j\geq-1}2^{\frac{2j}{r}}\norm{ \moc{\widehat{p}_1 - \widehat{q}_1}^{\frac{1}{r}} W_pW_q\Delta_j(f(p)g(q))}_{L_{t,p,q,x}^{r}}\notag \\
                \lesssim& \sum_{j\geq-1}2^{\frac{2j}{r}}\norm{ \moc{\widehat{p}_1 - \widehat{q}_1}^{\frac{1}{r}} W_pW_q \Delta_j\left(\sum_{i\geq -1}S_{i-1}f(p)\Delta_i g(q) \right) }_{L_{t,p,q,x}^{r}}\notag \\
                &+\sum_{j\geq-1}2^{\frac{2j}{r}}\norm{ \moc{\widehat{p}_1 - \widehat{q}_1}^{\frac{1}{r}} W_pW_q \Delta_j\left(\sum_{i\geq -1}\Delta_if(p) S_{i-1}g(q) \right) }_{L_{t,p,q,x}^{r}}  \\
                &+\sum_{j\geq-1}2^{\frac{2j}{r}}\norm{ \moc{\widehat{p}_1 - \widehat{q}_1}^{\frac{1}{r}} W_pW_q \Delta_j\left(\sum_{\moc{i-i'}\leq 1}\Delta_{i}f(p) \Delta_{i'}g(q) \right) }_{L_{t,p,q,x}^{r}}\notag \\
                :=&I_{1,1}+I_{1,2}+I_{1,3}.
              \notag \end{align}
For $I_{1,1}$, we use \eqref{bd - w(v)Delta_k} to obtain
\begin{align*}
    I_{1,1}\lesssim& \sum_{j\geq-1}\sum_{\moc{j-i}\leq 4}  2^{\frac{2j}{r}}\norm{ \moc{\widehat{p}_1 - \widehat{q}_1}^{\frac{1}{r}} W_pW_q S_{i-1}f(p)\Delta_i g(q) }_{L_{t,p,q,x}^{r}}\\
               \lesssim& \sum_{j\geq-1}\sum_{\moc{j-i}\leq 4}  2^{\frac{2j}{r}}\norm{\moc{\widehat{p}_1 - \widehat{q}_1}^{\frac{1}{r}} \norm{ W_pS_{i-1}f(p)}_{L_{y}^\infty}\norm{ W_q\Delta_i g(q)}_{L_{y}^{r}}}_{L_{t,p,q,\eta}^{r}},
\end{align*}
applying the embedding inequality \eqref{Embedding inequailty}
to $S_{i-1}f$, we find
\begin{align*}
    \norm{W_p S_{i-1}f}_{L_y^\infty}&\lesssim \sum_{k\geq-1}2^{\frac{2k}{r}}\norm{W_p\Delta_k S_{i-1}f}_{L_{y}^r}\lesssim \sum_{k\geq-1}2^{\frac{2k}{r}}\norm{W_p\Delta_kf}_{L_{y}^r},
\end{align*}
hence, Lemma \ref{Sum operator} yields
   \begin{equation}\label{equ:non-esti-gain-2}
     \begin{aligned}
        I_{1,1}\lesssim& \sum_{k\geq -1}\sum_{j\geq-1}\sum_{\moc{j-i}\leq 4}  2^{\frac{2(j+k)}{r}}\norm{\moc{\widehat{p}_1 - \widehat{q}_1}^{\frac{1}{r}} \norm{W_p \Delta_k f(p)}_{L_{y}^r}\norm{W_q \Delta_i g(q)}_{L_{y}^{r}}}_{L_{t,p,q,\eta}^{r}}
               \lesssim\mathscr{D}_T(f,g).
     \end{aligned}
   \end{equation}
  The term $I_{1,2}$ is estimated by interchanging $f$ and $g$, giving
$I_{1,2}\lesssim \mathscr{D}_T(f,g).$
As for $I_{1,3}$, applying Lemma \ref{Sum operator}
once more yields
    \begin{align}\label{equ:non-esti-gain-3}
      I_{1,3} \lesssim& \sum_{j\geq -1}\sum_{\max\{i,i'\}\geq j-2}\sum_{\moc{i-i'}\leq 1} 2^{\frac{2j}{r}}
      \norm{ \moc{\widehat{p}_1 - \widehat{q}_1}^{\frac{1}{r}} W_pW_q\Delta_{i}f(p) \Delta_{i'}g(q)  }_{L_{t,p,q,x}^{r}}\\
      \lesssim& \sum_{j\geq -1}\sum_{\max\{i,i'\}\geq j-2}\sum_{\moc{i-i'}\leq 1} 2^{\frac{2j}{r}}
      \norm{ \moc{\widehat{p}_1 - \widehat{q}_1}^{\frac{1}{r}}
      \norm{W_p\Delta_i f(p)}_{L_{y}^{r}} \norm{W_q g(q)}_{L_{y}^{\infty}}}_{L_{t,p,q,\eta}^{r}}\notag \\
      \lesssim&  \sum_{j\geq -1}\sum_{i\geq  j-3} 2^{\frac{2j}{r}}
      \norm{ \moc{\widehat{p}_1 - \widehat{q}_1}^{\frac{1}{r}}
      \norm{W_p\Delta_i f(p)}_{L_{y}^{r}} \norm{W_q g(q)}_{L_{y}^{\infty}}}_{L_{t,p,q,\eta}^{r}}\notag \\
      \lesssim& \sum_{k\geq -1}2^{\frac{2k}{r}} \sum_{j\geq -1}\sum_{i\geq  j-3} 2^{\frac{2j}{r}}
      \norm{ \moc{\widehat{p}_1 - \widehat{q}_1}^{\frac{1}{r}}
      \norm{W_p\Delta_i f(p)}_{L_{y}^{r}} \norm{W_q\Delta_k g(q)}_{L_{y}^{r}}}_{L_{t,p,q,\eta}^{r}}\notag \\
      \lesssim& \mathscr{D}_T(f,g)
    \notag \end{align}
Thus the three frequency interactions in \eqref{equ:non-esti-gain-1},
controlled by \eqref{equ:non-esti-gain-2} and \eqref{equ:non-esti-gain-3}, give
    \begin{equation}\label{equ:non-esti-gain-I-1-1}
      I_1 \lesssim I_{1,1}+I_{1,2}+I_{1,3}\lesssim \mathscr{D}_T(f,g).
    \end{equation}
We now turn to compute $I_2$. Invoking Proposition~\ref{prop:Estimate for I-loss}, we obtain
    \begin{align*}
      I_2 =&\int_{0}^{T}\sum_{j\geq -1} 2^{\frac{2j}{r}} \norm{W_p\int_{\mathbb{R}^3\times \mathbb{S}^2} B(p,q,\omega) \Delta_j(f(p)g(q)) dqd\omega }_{L_{p,x}^{r}}dt\\
          =&\int_{0}^{T}\sum_{j\geq -1} 2^{\frac{2j}{r}} \norm{\int_{\mathbb{R}^3\times \mathbb{S}^2} \dfrac{B(p,q,\omega)}{W_q\moc{\widehat{p}_1 - \widehat{q}_1}^{\frac{1}{r}}} \moc{\widehat{p}_1 - \widehat{q}_1}^{\frac{1}{r}} W_pW_q \Delta_j(f(p)g(q)) dqd\omega }_{L_{p,x}^{r}}dt\\
          \lesssim&\int_{0}^{T}\sum_{j\geq -1} 2^{\frac{2j}{r}} \norm{I_{\mathrm{loss}} \norm{ \moc{\widehat{p}_1 - \widehat{q}_1}^{\frac{1}{r}} W_pW_q \Delta_j(f(p)g(q))}_{L_{q}^{r}} }_{L_{p,x}^{r}}dt\\
          \lesssim&\int_{0}^{T}\sum_{j\geq -1} 2^{\frac{2j}{r}} (1+t)^{-\frac{3}{r'} -\min\{ \gamma,0\} + \frac{1}{r}}\norm{ \moc{\widehat{p}_1 - \widehat{q}_1}^{\frac{1}{r}} W_pW_q \Delta_j(f(p)g(q))}_{L_{p,q,x}^{r}}dt\\
          \lesssim&\sum_{j\geq -1} 2^{\frac{2j}{r}} \norm{ \moc{\widehat{p}_1 - \widehat{q}_1}^{\frac{1}{r}} W_pW_q \Delta_j(f(p)g(q))}_{L_{t,p,q,x}^{r}}.
    \end{align*}
Applying the same product estimates as above yields
    \begin{equation}\label{equ:non-esti-loss-I-2}
      I_2\lesssim \mathscr{D}_T(f,g).
    \end{equation}
Adding \eqref{equ:non-esti-gain-I-1-1} and
\eqref{equ:non-esti-loss-I-2} proves \eqref{equ:non-esti}
and completes the proof of Proposition~\ref{prop:non-esti}.
 \end{proof}

\section{Bilinear interaction and a priori estimates}\label{sec:interaction-estimates}

In this section, we establish a priori estimates for solutions of
\eqref{equ:Rela-Boltzmann}. Following \cite[Section~4]{HLPZ2026},
we use the nonlinear estimate from Section~\ref{sec:collision-bound}
to control the collision terms.

\begin{proposition}\label{prop:div-curl-disspation}
Let $f$ and $g$ be solutions of \eqref{equ:Rela-Boltzmann}
with initial data $f_0$ and $g_0$, respectively.
Then, for every $T>0$ for which the quantities below are finite,
we have
      \begin{equation}\label{equ:div-curl-disspation}
        \mathscr{D}_T(f,g)\lesssim \mathscr{E}_0(f_0)\mathscr{E}_0(g_0) + \mathscr{E}_T(f)\mathscr{E}_T(g) + \mathscr{E}_T(f)\mathscr{E}^{1-\frac{1}{r}}_T(g)\mathscr{D}^{\frac{1}{r}}_T(g) +\mathscr{E}_T(g)\mathscr{E}^{1-\frac{1}{r}}_T(f)\mathscr{D}^{\frac{1}{r}}_T(f) .
      \end{equation}
In particular,
      \begin{equation}\label{equ:div-curl-disspation-}
        \mathscr{D}_T(f)\lesssim \mathscr{E}^2_0(f_0)  + \mathscr{E}^2_T(f)  +\mathscr{E}^{2-\frac{1}{r}}_T(f)\mathscr{D}^{\frac{1}{r}}_T(f) .
      \end{equation}
\end{proposition}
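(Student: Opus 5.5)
We will prove the estimate by a fixed-direction div--curl (interaction) argument applied blockwise: frequency-localize first, then run a bilinear virial identity in the longitudinal variable for each pair of dyadic blocks, following \cite[Section~4]{HLPZ2026}.

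\textbf{Step 1: blockwise transport equations.} Fix $j,k\geq-1$ and set $F_j=W_p\Delta_jf$ and $G_k=W_q\Delta_kg$. Since $\Delta_j$ acts only in $y$, it commutes with $\partial_t+\widehat p\cdot\nabla_x$, and $W$ is transported exactly. Hence
\[
(\partial_t+\widehat p\cdot\nabla_x)F_j=W_p\Delta_jQ(f,f),
\]
and likewise for $G_k$. Define the $y$-integrated densities
\[
a(t,\eta,p)=\|F_j(t,\eta,\cdot,p)\|_{L^r_y}^r,\qquad b(t,\eta,q)=\|G_k(t,\eta,\cdot,q)\|_{L^r_y}^r.
\]
Multiplying by $r|F_j|^{r-2}F_j$ and integrating in $y$ (the $y$-transport term integrates to zero) gives a one-dimensional transport inequality
\[
\partial_ta+\widehat p_1\partial_\eta a\leq r\,\|F_j\|_{L^r_y}^{r-1}\|W_p\Delta_jQ(f,f)\|_{L^r_y},
\]
and the analogous inequality for $b$.

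\textbf{Step 2: one-dimensional interaction functional.} For the pair $(a,b)$ with speeds $\widehat p_1,\widehat q_1$, introduce
\[
\mathcal I(t)=\iint\!\!\iint_{\eta<\eta'}a(t,\eta,p)\,b(t,\eta',q)\,\mathrm{sgn}(\widehat p_1-\widehat q_1)\,d\eta\,d\eta'\,dp\,dq.
\]
Differentiating in $t$ and integrating by parts in $\eta,\eta'$ produces the diagonal term
\[
-\int|\widehat p_1-\widehat q_1|\,a(\eta,p)\,b(\eta,q)\,d\eta\,dp\,dq,
\]
plus source terms. This is the fixed-direction div--curl lemma. Integrating over $[0,T]$ and using $|\mathcal I(t)|\leq\|a\|_{L^1_{\eta,p}}\|b\|_{L^1_{\eta,q}}$ yields
\[
\int_0^T\!\!\int|\widehat p_1-\widehat q_1|\,a\,b\leq 2\sup_t\|a\|_{L^1}\sup_t\|b\|_{L^1}+\|a\|_{L^\infty_tL^1}\int_0^T\!\!\int\text{(source for $b$)}+(\text{sym.}).
\]
The left side is exactly the $(j,k)$ summand of $\mathscr D_T(f,g)$ raised to the power $r$.

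\textbf{Step 3: taking $r$-th roots and summing.} Take $r$-th roots so that $\|a\|_{L^1}^{1/r}=\|F_j\|_{L^r}$. The $L^\infty_t$ terms give $\mathscr E_T(f)\mathscr E_T(g)$ after multiplying by $2^{2(j+k)/r}$ and summing. The $\mathscr E_0$ product appears because we work with $t\in[0,T]$ and bound the boundary terms at $t=0$ separately, or because the mild formulation carries the initial term. For the source term, H\"older in $(t,\eta,y,q)$ gives
\[
\int_0^T\!\!\int\|G_k\|_{L^r_y}^{r-1}\|W\Delta_kQ\|_{L^r_y}\leq\|G_k\|_{L^\infty_tL^r}^{r-1}\int_0^T\|W\Delta_kQ\|_{L^r}\,dt.
\]
Combined with $\|F_j\|_{L^\infty_tL^r}$, taking the $r$-th root leaves
\[
\|F_j\|_{L^\infty_t L^r}\,\|G_k\|_{L^\infty_t L^r}^{1-1/r}\Bigl(\int_0^T\|W\Delta_kQ(g,g)\|_{L^r}\,dt\Bigr)^{1/r}.
\]
Summing with weights $2^{2(j+k)/r}$ and using H\"older/Jensen on the $\ell^1$ sums in $k$ (the exponents $1-1/r$ and $1/r$ split the weight $2^{2k/r}$ accordingly) bounds this by
\[
\mathscr E_T(f)\,\mathscr E_T(g)^{1-1/r}\Bigl(\int_0^T\sum_k2^{2k/r}\|W\Delta_kQ(g,g)\|_{L^r}\,dt\Bigr)^{1/r}.
\]
Proposition~\ref{prop:non-esti} then bounds the last factor by $\mathscr D_T(g)^{1/r}$.

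\textbf{Main obstacle.} The delicate point is justifying the sign-function interaction identity when $\widehat p_1=\widehat q_1$ and the source terms lack sign. I would take absolute values of the source terms; the $\mathrm{sgn}$ factor is bounded by $1$, so this works. Some care is also needed with regularity, which I would handle by approximation in the mild formulation with truncated data. The Jensen step converting $\sum_k2^{2k/r}X_k^{1-1/r}Y_k^{1/r}$ into $(\sum_k2^{2k/r}X_k)^{1-1/r}(\sum_k2^{2k/r}Y_k)^{1/r}$ is H\"older for sums and is routine. The special case \eqref{equ:div-curl-disspation-} follows by setting $g=f$.
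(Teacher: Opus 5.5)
Your proposal is correct and follows essentially the paper's route. You use blockwise transport identities for $\|W_p\Delta_j f\|_{L^r_y}^r$ and the one-dimensional interaction functional; your $\mathrm{sgn}(\widehat p_1-\widehat q_1)$-weighted functional integrated over $(p,q)$ is just Lemma~\ref{div-curl Lemma} applied for fixed $(p,q)$ ``after interchanging the equations'' and then integrated in $(p,q)$. You then take $r$-th roots, apply H\"older in the dyadic sums, and invoke Proposition~\ref{prop:non-esti}. One thing to tidy: Step~1 should keep the exact source $r\int|F_j|^{r-2}F_jW_p\Delta_jQ(f,f)\,dy$ with a two-sided bound rather than a one-sided inequality, since the sign weight can be negative. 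Your closing paragraph already does this.
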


    \begin{proof}
Fix $p,q$ and apply the transverse projections $\Delta_j$ and $\Delta_k$
to the equations for $f$ and $g$, respectively. The characteristic identities
for $W_p$ and $W_q$ remove the derivatives of the weights. Testing the first
weighted equation against $r|W_p\Delta_j f|^{r-2}W_p\Delta_j f$, and treating
the second in the same way, gives the following balances after integration
over $y\in\mathbb R^2$:
      \begin{equation}\label{equ:p-q-equation}
          \left\{
          \begin{aligned}
            \partial_t \norm{W_p\Delta_j f}^{r}_{L_y^{r}} + \widehat{p}_1\cdot \partial_{\eta}\norm{W_p\Delta_j f}^{r}_{L_y^{r}} &=r\int_{\mathbb{R}^2}\moc{W_p\Delta_j f}^{r-2}W_p\Delta_j f W_p \Delta_j Q(f,f)dy\\
            \partial_t \norm{W_q\Delta_k g}^{r}_{L_y^{r}} + \widehat{q}_1\cdot \partial_{\eta} \norm{W_q\Delta_k g}^{r}_{L_y^{r}} &=r\int_{\mathbb{R}^2}\moc{W_q\Delta_k g}^{r-2}W_q\Delta_k g W_q \Delta_k Q(g,g)dy.
          \end{aligned}
          \right.
      \end{equation}
For the low-regularity class, these balances are obtained first for smooth
profiles and then by approximation along free characteristics. The power
chain rule maps the $L_t^1L_x^r$ weighted sources into $L^1_{t,\eta}$
by H\"older's inequality. The $L^1$ formulation of
Lemma~\ref{div-curl Lemma} therefore applies without longitudinal
pointwise decay. Applied to \eqref{equ:p-q-equation}, it gives,
after interchanging the two equations when necessary,
      \begin{align*}
        \int_{0}^{T}dt \int_{\mathbb{R}}& \moc{\widehat{p}_1 - \widehat{q}_1}\norm{W_p\Delta_j f}_{L_y^{r}}^r  \norm{W_q\Delta_k g}_{L_y^{r}}^r d\eta \\ \lesssim&
         \norm{W_0(p)\Delta_j f_0}_{L_x^{r}}^r \norm{W_0(q)\Delta_k g_0}_{L_x^{r}}^r+\norm{W_p\Delta_j f(T)}_{L_x^{r}}^r \norm{W_q\Delta_k g(T)}_{L_x^{r}}^r \\
         &+ \int_{0}^{T}\norm{W_p \Delta_j f}_{L_{x}^{r}}^r  \norm{W_q \Delta_k g}_{L_{x}^{r}}^{r-1}\norm{ W_q\Delta_kQ(g,g)}_{L_{x}^r}ds \\
         &+ \int_{0}^{T}\norm{W_q \Delta_k g}_{L_{x}^{r}}^r  \norm{W_p \Delta_j f}_{L_{x}^{r}}^{r-1}\norm{ W_p\Delta_jQ(f,f)}_{L_{x}^r}ds
      \end{align*}
Integrating this estimate over the two momenta yields
    \begin{align*}
        \int_{0}^{T}dt\int_{\mathbb{R}\times \mathbb{R}^3\times \mathbb{R}^3}&\moc{\widehat{p}_1 - \widehat{q}_1}\norm{W_p\Delta_j f}_{L_y^{r}}^r  \norm{W_q\Delta_k g}_{L_y^{r}}^r dpdqd\eta \\
        \lesssim&\norm{W_0(p)\Delta_j f_0}_{L_{p,x}^{r}}^r \norm{W_0(q)\Delta_k g_0}_{L_{q,x}^{r}}^r+\norm{W_p\Delta_j f(T)}_{L_{p,x}^{r}}^r \norm{W_q\Delta_k g(T)}_{L_{q,x}^{r}}^r \\
         &+ \int_{0}^{T}\norm{W_p \Delta_j f}_{L_{p,x}^{r}}^r  \norm{W_q \Delta_k g}_{L_{q,x}^{r}}^{r-1}\norm{ W_q\Delta_kQ(g,g)}_{L_{q,x}^r}ds \\
         &+ \int_{0}^{T}\norm{W_q \Delta_k g}_{L_{q,x}^{r}}^r  \norm{W_p \Delta_j f}_{L_{p,x}^{r}}^{r-1}\norm{ W_p\Delta_jQ(f,f)}_{L_{p,x}^r}ds \\
         \lesssim&\norm{W_0(p)\Delta_j f_0}_{L_{p,x}^{r}}^r \norm{W_0(q)\Delta_k g_0}_{L_{q,x}^{r}}^r+\norm{W_p\Delta_j f}_{L_T^{\infty}L_{p,x}^{r}}^r \norm{W_q\Delta_k g}_{L_T^{\infty}L_{q,x}^{r}}^r \\
         &+ \norm{W_p \Delta_j f}_{L_T^{\infty}L_{p,x}^{r}}^r  \norm{W_q \Delta_k g}_{L_T^{\infty}L_{q,x}^{r}}^{r-1}\int_{0}^{T}\norm{ W_q\Delta_kQ(g,g)}_{L_{q,x}^r}ds \\
         &+ \norm{W_q \Delta_k g}_{L_T^{\infty}L_{q,x}^{r}}^r  \norm{W_p \Delta_j f}_{L_T^{\infty}L_{p,x}^{r}}^{r-1}\int_{0}^{T}\norm{ W_p\Delta_jQ(f,f)}_{L_{p,x}^r}ds.
    \end{align*}
Take the $r$th root, multiply by $2^{2(j+k)/r}$, and sum over
$j,k$. The source terms are dealt with \eqref{equ:non-esti}, giving
    \begin{align}\label{equ:div-curl-disspation-3}
        \sum_{j,k\geq-1}2^{\frac{2(j+k)}{r}}&\norm{\moc{\widehat{p}_1 - \widehat{q}_1}^{\frac{1}{r}} \norm{W_p\Delta_j f}_{L_y^{r}}\norm{W_q\Delta_k g}_{L_y^{r}}}_{L_{T,p,q,\eta}^r}\\
\lesssim&\mathscr{E}_0(f_0)\mathscr{E}_0(g_0)+\mathscr{E}_T(f)\mathscr{E}_T(g)\notag \\
        &+\mathscr{E}_T(f)\mathscr{E}^{1-\frac{1}{r}}_T(g)\left(\int_{0}^T\sum_{k\geq-1}2^{\frac{2k}{r}} \norm{W_q\Delta_k Q(g,g)}_{L_{q,x}^r}dt \right)^{\frac{1}{r}}\notag \\
        &+\mathscr{E}_T(g)\mathscr{E}^{1-\frac{1}{r}}_T(f)\left(\int_{0}^T\sum_{j\geq-1} 2^{\frac{2j}{r}}\norm{W_p\Delta_j Q(f,f)}_{L_{p,x}^r}dt \right)^{\frac{1}{r}},\notag \\
\lesssim&\mathscr{E}_0(f_0)\mathscr{E}_0(g_0)+\mathscr{E}_T(f)\mathscr{E}_T(g)\notag \\
        &+\mathscr{E}_T(f)\mathscr{E}^{1-\frac{1}{r}}_T(g)\mathscr{D}^{\frac{1}{r}}_T(g)+\mathscr{E}_T(g)\mathscr{E}^{1-\frac{1}{r}}_T(f)\mathscr{D}^{\frac{1}{r}}_T(f).
    \notag \end{align}
The left-hand side of \eqref{equ:div-curl-disspation-3} is the bilinear dissipation functional, so this proves \eqref{equ:div-curl-disspation}. Taking $f=g$ in \eqref{equ:div-curl-disspation}, proves \eqref{equ:div-curl-disspation-}. This completes the proof of proposition \ref{prop:div-curl-disspation}.
    \end{proof}

We now combine Proposition~\ref{prop:div-curl-disspation}
with the nonlinear estimate to close the energy bound.
Assume that
\begin{align}\label{aps}
\mathscr{E}^2_T(f)+ \mathscr{D}_T(f) \le \varepsilon_0,
\end{align}
for all $T \ge 0$ under consideration, where $\varepsilon_0>0$
is sufficiently small.

\begin{theorem}\label{theo:priori}
Let $f$ be a strong solution of \eqref{equ:Rela-Boltzmann} satisfying \eqref{aps}. If $\gamma$ and $r$ satisfy \eqref{equ:gamma-r-1}, then
\begin{equation}\label{priori estimate 2}
\mathscr{E}^2_T(f) + \mathscr{D}_T(f)
\lesssim \mathscr{E}^2_0(f_0).
\end{equation}
\end{theorem}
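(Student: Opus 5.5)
The plan is to close two coupled inequalities: an energy inequality for $\mathscr E_T(f)$ and the interaction inequality \eqref{equ:div-curl-disspation-} of Proposition~\ref{prop:div-curl-disspation}. Both can be absorbed under the smallness hypothesis \eqref{aps}.

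\emph{Energy inequality.} Apply $\Delta_j$ to \eqref{equ:Rela-Boltzmann} and multiply by $W_p$. Since $(\partial_t+\widehat p\cdot\nabla_x)W=0$ and $\Delta_j$ acts only in $y$, hence commutes with free transport, the function $W_p\Delta_j f$ solves free transport with source $W_p\Delta_j Q(f,f)$. Along characteristics, or by the same $L^r$ balance used in \eqref{equ:p-q-equation} integrated over $(x,p)$, the transport term integrates to zero. For every $t\le T$ this gives
\[
\|W\Delta_j f(t)\|_{L^r_{x,p}}\le \|W_0\Delta_j f_0\|_{L^r_{x,p}}+\int_0^T\|W_p\Delta_j Q(f,f)\|_{L^r_{x,p}}\,ds .
\]
Note that at $t=0$ we have $W=W_0$. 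Multiplying by $2^{2j/r}$, summing over $j\ge-1$, and applying Proposition~\ref{prop:non-esti}, we obtain
\[
\mathscr E_T(f)\lesssim \mathscr E_0(f_0)+\mathscr D_T(f).
\]
Squaring yields $\mathscr E_T^2(f)\lesssim \mathscr E_0^2(f_0)+\mathscr D_T(f)^2$, and under \eqref{aps} we have $\mathscr D_T(f)^2\le\varepsilon_0\,\mathscr D_T(f)$.

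\emph{Interaction inequality.} Use \eqref{equ:div-curl-disspation-}. By Young's inequality with exponents $r$ and $r'$,
\[
\mathscr E_T^{2-\frac1r}\mathscr D_T^{\frac1r}\le \tfrac1r\,\mathscr E_T^{2-\frac1r}\cdot\mathscr E_T^{\frac{1}{r'}}\cdot\big(\mathscr E_T^{-\frac{1}{r'}}\mathscr D_T^{\frac1r}\big),
\]
but a cleaner route is available. Since $\mathscr E_T\le\varepsilon_0^{1/2}$, write $\mathscr E_T^{2-1/r}\mathscr D_T^{1/r}=\mathscr E_T^{1-1/r}\cdot(\mathscr E_T\mathscr D_T^{1/r})$ and apply Young with exponents $r'$ and $r$ to $\mathscr E_T^{2-1/r}$ and $\mathscr D_T^{1/r}$ after extracting a small factor. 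This gives
\[
\mathscr E_T^{2-\frac1r}\mathscr D_T^{\frac1r}\le \varepsilon_0^{\frac{1}{2}(1-\frac1r)}\big(\mathscr E_T^{2}+\mathscr D_T\big),
\]
because $\mathscr E_T^{2-1/r}\mathscr D_T^{1/r}\le \mathscr E_T^{1-1/r}\big(\tfrac{1}{r'}\mathscr E_T^{2\cdot\frac{r'}{2r'}}\ldots\big)$. Concretely, Young gives $\mathscr E_T\mathscr D_T^{1/r}\cdot\mathscr E_T^{1-1/r}$, and $\mathscr E_T^{1}\mathscr D_T^{1/r}\le \mathscr E_T^{2/r'}\mathscr D_T^{1/r}\cdot \mathscr E_T^{1-2/r'}$ is handled by weighted AM--GM, $a^{1/r'}b^{1/r}\le a+b$ with $a=\mathscr E_T^2$ and $b=\mathscr D_T$. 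Altogether,
\[
\mathscr D_T(f)\lesssim \mathscr E_0^2(f_0)+\mathscr E_T^2(f)+\varepsilon_0^{c}\big(\mathscr E_T^2(f)+\mathscr D_T(f)\big)
\]
for some $c>0$. The main care point is tracking the exponents so that a positive power of $\varepsilon_0$ genuinely appears.

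\emph{Closing.} Insert the energy inequality into the right-hand side:
\[
\mathscr D_T\lesssim \mathscr E_0^2+\varepsilon_0\mathscr D_T+\varepsilon_0^c\mathscr D_T .
\]
For $\varepsilon_0$ small, absorb the $\mathscr D_T$ terms into the left-hand side to get $\mathscr D_T(f)\lesssim\mathscr E_0^2(f_0)$. Feeding this back into the energy inequality gives $\mathscr E_T^2(f)\lesssim \mathscr E_0^2(f_0)+\varepsilon_0\,\mathscr E_0^2(f_0)$, which proves \eqref{priori estimate 2}.

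The main obstacle is justifying the weighted energy inequality at this low regularity, that is, showing that the $L^r$ balance for $W_p\Delta_j f$ holds for strong solutions without derivatives in $\eta$ or $p$. I would first prove it for smooth profiles, using the exact commutation of $W$ and $\Delta_j$ with free transport, and then pass to the limit along free characteristics via the mild formulation, exactly as in the proof of Proposition~\ref{prop:div-curl-disspation}.
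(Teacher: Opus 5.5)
Your proposal follows the paper's proof: you derive the Duhamel/weighted-$L^r$ bound $\mathscr E_T(f)\lesssim\mathscr E_0(f_0)+\mathscr D_T(f)$ from Proposition~\ref{prop:non-esti}, combine it with \eqref{equ:div-curl-disspation-}, and absorb under \eqref{aps}; your explicit closing step fills in what the paper only calls absorbing the small terms. One slip in your Young's-inequality bookkeeping: the correct split is $\mathscr E_T^{2-\frac1r}\mathscr D_T^{\frac1r}=\mathscr E_T^{\frac1r}\,(\mathscr E_T^{2})^{\frac1{r'}}\mathscr D_T^{\frac1r}\le\varepsilon_0^{\frac1{2r}}\big(\mathscr E_T^2+\mathscr D_T\big)$, so the factor is $\varepsilon_0^{1/(2r)}$, whereas your stated $\varepsilon_0^{\frac12(1-\frac1r)}$ fails for small $\varepsilon_0$ when $r>2$ (test $\mathscr E_T^2=\mathscr D_T=\varepsilon_0/2$); since your closing only needs some $c>0$, the argument still goes through.
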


\begin{proof}
For the free-transport semigroup
$U(t) = e^{-t\widehat{p}\cdot \nabla_x}$, we have
        \begin{equation}\label{equ:property-semi-U}
          \norm{U(t)f}_{L^r_{p,x}} \leq \norm{f}_{L^r_{p,x}},\quad  W(t,x,p)U(t-s) = U(t-s)W(s,x,p),\quad \Delta_j U(t) = U(t)\Delta_j.
          \end{equation}
Applying $\Delta_j$ to \eqref{equ:Rela-Boltzmann} and using
Duhamel's formula together with \eqref{equ:property-semi-U},
we obtain
         \begin{equation}\label{equ:Lr-Energy-esti-1}
          W_p \Delta_j f =U(t)[W_0 \Delta_j f_0] + \int_{0}^{t}U(t-s)[W_p\Delta_j Q(f,f)]ds,
        \end{equation}
Taking the $L^r_{p,x}$ norm in \eqref{equ:Lr-Energy-esti-1} gives
         \begin{align*}
             \norm{W_p\Delta_j f}_{L_{p,x}^r} \lesssim &\norm{W_0 \Delta_j f_0}_{L_{p,x}^r} + \int_{0}^{t}\norm{W_p\Delta_j Q(f,f)}_{L_{p,x}^r}ds,
         \end{align*}
Summing the frequency blocks and using \eqref{equ:non-esti} reduces the collision source to the dissipation functional:
      \begin{equation}\label{equ:Lr-Energy-esti}
          \mathscr{E}_T(f)\lesssim \mathscr E_0(f_0) + \mathscr{D}_T(f).
        \end{equation}
Under \eqref{aps}, combine \eqref{equ:div-curl-disspation-} with
\eqref{equ:Lr-Energy-esti} and absorb the small terms. This yields
\eqref{priori estimate 2} and proves Theorem~\ref{theo:priori}.
\end{proof}

\section{Global existence and uniqueness}\label{sec:global-existence}

The preceding estimates are uniform in time, which allows the approximation
to be controlled for all $t>0$. Theorem \ref{theo:priori} in
Section \ref{sec:interaction-estimates} provides the energy--interaction balance. To use it
for an iteration, we also need uniform control of the interaction between
successive approximations. We use the iterating approximate solutions:
    \begin{equation}\label{equ:iteration-sequence}
    \left\{
        \begin{aligned}
            \partial_t f^{n+1} + \widehat{p}\cdot \nabla_x f^{n+1} + Q^-(f^{n+1},f^n) &=Q^{+}(f^{n},f^n);\\
            f^{n+1}(0,x,p)&=f_0(x,p),\quad n\geq 0
        \end{aligned}
        \right.
    \end{equation}
with initialization $f^0(t,x,p) = 0.$

We first record the a priori bounds for finite-norm iterates. Their
well-definedness, convergence, and positivity are established below in the
space of collision sources, using the fixed-direction relativistic
estimates directly.

\begin{lemma}\label{lem:priori-iteration}
Suppose that $\gamma,r,\ell,m$ satisfy \eqref{equ:gamma-r-1} and
\eqref{equ:gamma-r-2}. For sufficiently small $\delta>0$, if
$\mathscr E_0(f_0)\leq\delta$, then any iterates of
\eqref{equ:iteration-sequence} for which
$\mathscr E_T(f^i)$, $\mathscr D_T(f^i)$, and
$\mathscr D_T(f^i,f^{i-1})$ are finite satisfy the following estimates.
The construction below proves that these quantities are finite for all time.
    \begin{equation}\label{equ:assume-condition}
        \mathscr{E}_T(f^n)\leq C\delta,\quad \mathscr{D}_T(f^n)\leq C\delta^2 \quad\quad \forall T \in \mathbb{R}^{+},
    \end{equation}
   where $C>0$ is independent of $T$ and $n$.
\end{lemma}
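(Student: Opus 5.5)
We argue by induction on $n$, closing a bootstrap on the pair $(\mathscr E_T(f^{n}),\mathscr D_T(f^{n}))$ together with the mixed quantity $\mathscr D_T(f^{n},f^{n-1})$. The case $n=0$ is trivial since $f^0=0$. Assume that \eqref{equ:assume-condition} holds for $f^0,\dots,f^n$ with a constant $C$ to be fixed, and that the relevant functionals for $f^{n+1}$ are finite.

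\emph{Energy estimate for $f^{n+1}$.}
Apply $\Delta_j$ and the weight to \eqref{equ:iteration-sequence}. Use Duhamel's formula with \eqref{equ:property-semi-U} exactly as in the proof of Theorem~\ref{theo:priori}, treating both $Q^+(f^n,f^n)$ and $Q^-(f^{n+1},f^n)$ as sources. Proposition~\ref{prop:non-esti} applies separately to the gain and loss parts, since its proof bounds $I_1$ and $I_2$ individually by the bilinear functional. This gives
\[
\mathscr E_T(f^{n+1})\lesssim \mathscr E_0(f_0)+\mathscr D_T(f^n)+\mathscr D_T(f^{n+1},f^n).
\]

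\emph{Interaction estimates.}
Run the div--curl argument of Proposition~\ref{prop:div-curl-disspation} twice.
\begin{itemize}
\item For the pair $(f^{n+1},f^{n+1})$, the source on the right of each localized balance is $W\Delta_j[Q^+(f^n,f^n)-Q^-(f^{n+1},f^n)]$. Its time integral in the Besov sum is bounded by $\mathscr D_T(f^n)+\mathscr D_T(f^{n+1},f^n)$, again by Proposition~\ref{prop:non-esti}.
\item For the mixed pair $(f^{n+1},f^n)$, the second equation is the $n$-th iterate equation, whose source is bounded by $\mathscr D_T(f^{n-1})+\mathscr D_T(f^n,f^{n-1})$.
\end{itemize}
The div--curl lemma then yields
\[
\mathscr D_T(f^{n+1})\lesssim \mathscr E_0^2+\mathscr E_T^2(f^{n+1})+\mathscr E_T^{2-1/r}(f^{n+1})\bigl(\mathscr D_T(f^n)+\mathscr D_T(f^{n+1},f^n)\bigr)^{1/r},
\]
and similarly
\[
\mathscr D_T(f^{n+1},f^n)\lesssim \mathscr E_0^2+\mathscr E_T(f^{n+1})\mathscr E_T(f^n)+\text{analogous cross terms}.
\]
We therefore carry the mixed quantity $\mathscr D_T(f^{n+1},f^n)\le C\delta^2$ as an additional inductive hypothesis, alongside \eqref{equ:assume-condition}.

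\emph{Closing the induction.}
Set $X=\mathscr E_T(f^{n+1})$, $Y=\mathscr D_T(f^{n+1})$ and $Z=\mathscr D_T(f^{n+1},f^n)$. Insert the inductive bounds $\mathscr E_T(f^n)\le C\delta$, $\mathscr D_T(f^n)\le C\delta^2$ and $\mathscr D_T(f^n,f^{n-1})\le C\delta^2$. The system becomes
\begin{align*}
X&\le C_1\bigl(\delta+C\delta^2+Z\bigr),\\
Z&\le C_1\bigl(\delta^2+XC\delta+X(C\delta)^{1-1/r}(C\delta^2+Z)^{1/r}+C\delta\,X^{1-1/r}(C\delta^2+Z)^{1/r}\bigr),
\end{align*}
with a similar inequality for $Y$. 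A continuity argument in $T$ closes this system, using finiteness and the continuity of $T\mapsto X,Y,Z$, which follow from the assumed finiteness on finite intervals and the absence of jumps in the integrals. Young's inequality turns terms like $X^{a}Z^{1/r}$ into $\eta Z+C_\eta X^{a r'}$. Choosing first $C\ge 4C_1$ and then $\delta$ small gives $X\le C\delta$ and $Y,Z\le C\delta^2$, uniformly in $T$ and $n$.

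\emph{Main obstacle.}
The hardest step is the mixed interaction term $\mathscr D_T(f^{n+1},f^n)$. It couples two different equations, so the bootstrap must be run on the triple $(X,Y,Z)$ simultaneously. The bookkeeping has to show that every right-hand side term is either quadratic in $\delta$ or carries a small factor multiplying $Z$ or $Y$, so that these can be absorbed. If the absorption fails for $Z$, the fallback is to use Proposition~\ref{prop:non-esti} with the loss term bounded through $\mathscr D_T(f^{n+1},f^n)^{1/2}$-type interpolation. Concretely, replace the linear dependence on $Z$ in the energy estimate by $Z\le Y^{1/2}\mathscr D_T(f^n)^{1/2}$ via Cauchy--Schwarz in the $(p,q)$ product structure, and then close on $(X,Y)$ alone.
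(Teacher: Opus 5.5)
Your proposal is essentially the paper's proof. Both use the same strengthened induction carrying $\mathscr D_T(f^i,f^{i-1})$. Both get the energy bound from the separate gain and loss estimates in Proposition~\ref{prop:non-esti}. Both use the mixed div--curl estimate for $(f^{n+1},f^n)$, with the $f^n$-source controlled by $\mathscr D_T(f^{n-1})+\mathscr D_T(f^n,f^{n-1})$. The paper closes by direct absorption, using only the assumed finiteness and $X^{1-1/r}Z^{1/r}\le X+Z$. Your continuity argument in $T$ is therefore unnecessary.

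Two points need correcting.

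\emph{The constant choice does not close.} Your $Z$-inequality contains $C_1\delta^2+C_1C\delta X$. The best available bound is $X\le 2C_1\delta$, so you only get
\[
Z\le C_1(1+2C_1C)\delta^2+o(\delta^2).
\]
This is not $\le C\delta^2$ for any $C$ once $2C_1^2\ge1$, so ``first $C\ge4C_1$, then $\delta$ small'' fails. The paper's use of $\lesssim$ passes over the same point. There are two easy fixes:
\begin{itemize}
\item The energy bound $X\le2C_1\delta$ does not involve the dissipation constant. So induct on $\mathscr E_T(f^i)\le2C_1\delta$ and on $\mathscr D_T(f^i),\mathscr D_T(f^i,f^{i-1})\le C_D\delta^2$, with $C_D$ chosen large in terms of $C_1$. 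Only then take $\delta$ small.
\item Alternatively, note that $I(T)\ge0$ in the proof of Lemma~\ref{div-curl Lemma}. The term $\|A(T)\|_1\|B(T)\|_1$ can therefore be dropped, and with it the product $\mathscr E_T(f^{n+1})\mathscr E_T(f^n)$.
\end{itemize}

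\emph{The fallback is false.} The bound $Z\le Y^{1/2}\mathscr D_T(f^n)^{1/2}$ ``via Cauchy--Schwarz in the $(p,q)$ product structure'' would require the kernel $\moc{\widehat p_1-\widehat q_1}$ to be positive semidefinite. A positive semidefinite kernel that vanishes on the diagonal is identically zero, so it is not. In general the mutual interaction is not controlled by the self-interactions. The absorption never fails, so you should simply delete this fallback.
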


\begin{proof}
We prove by induction the stronger estimates
    \begin{equation}\label{equ:strengthened-assumtion}
        \mathscr{E}_T(f^i)\leq C\delta,\quad \mathscr{D}_T(f^i)\leq C\delta^2 , \quad \mathscr{D}_T(f^i,f^{i-1})\leq C\delta^2,
    \end{equation}
for all $i\geq1$, with $C>0$ independent of $T$ and $i$.

To initialize the induction, for $ i =1$, since $f^0 = 0,$ the function $f^1$ satisfies
    \begin{equation*}
        \partial_t f^1 + \widehat{p}\cdot \nabla_x f^1  =0
    \end{equation*}
with initial data $f^1(0,x,p) = f_0(x,p).$
It follows that
    \begin{equation*}
        \mathscr{E}_T(f^1) = \norm{f_0}_{\tilde{L}_{p,\eta}^r(B_{r,1,W_0}^{2/r})}\leq \delta.
    \end{equation*}
To estimate the dissipation, we use
    \begin{equation*}
    \left\{
        \begin{array}{rlc}
           \partial_t \norm{W_p \Delta_j f^1(p)}^r_{L_{y}^r} + \widehat{p}_1 \cdot \partial_\eta \norm{W_p \Delta_j f^1(p)}^r_{L_{y}^r}  &=&0 ;\\
           \partial_t \norm{W_q \Delta_k f^1(q)}^r_{L_{y}^r} + \widehat{q}_1 \cdot \partial_\eta \norm{W_q\Delta_k f^1(q)}^r_{L_{y}^r}  &=&0,
        \end{array}\right.
    \end{equation*}
repeating the div--curl calculation leading to \eqref{equ:div-curl-disspation} in Proposition \ref{prop:div-curl-disspation} gives
    \begin{equation*}
        \mathscr{D}_T(f^1)\lesssim \mathscr{E}_0^2(f_0) + \mathscr{E}_T^2(f^1)\lesssim \mathscr{E}_0(f_0)^2.
    \end{equation*}
Moreover, $\mathscr{D}_{T}(f^1,f^0) = 0\leq C\delta^2$,
since $f^0=0.$
Thus \eqref{equ:strengthened-assumtion} holds for $i=1$.

Assume \eqref{equ:strengthened-assumtion} for $i\leq n$. We establish the estimate
at $i=n+1$, provisionally assuming that the quantities in
\eqref{equ:strengthened-assumtion} are finite for $i=n+1$. The same argument as in \eqref{equ:Lr-Energy-esti} to
\eqref{equ:iteration-sequence}${}_1$ yields
    \begin{align}\label{equ:energy-fn+1}
        \mathscr{E}_T(f^{n+1})\lesssim& \mathscr{E}_0(f_0) +\sum_{j\geq -1}2^{\frac{2j}{r}} \int_{0}^T \norm{W_p\Delta_j [Q^+(f^n,f^n)-Q^{-}(f^{n+1},f^n)]}_{L_{p,x}^r}dt\\
        \lesssim& \mathscr{E}_0(f_0) + \mathscr{D}_T(f^n,f^n) + \mathscr{D}_T(f^{n+1},f^{n})\notag \\
        \lesssim&\delta + \mathscr{D}_T(f^{n+1},f^{n}).
    \notag \end{align}
The gain term is controlled by the preceding iterate, whereas the loss term couples successive iterates. Repeating the calculation of \eqref{equ:div-curl-disspation} and using the induction bounds gives
    \begingroup\small
\begin{align*}
        \mathscr{D}_T(f^{n+1},f^n)\lesssim& \mathscr{E}^2_0(f_0)+ \mathscr{E}_T(f^{n+1})\mathscr{E}_T(f^{n}) + \mathscr{E}_T(f^{n+1})\mathscr{E}^{1-\frac{1}{r}}_T(f^{n})\left(\mathscr{D}_T(f^{n},f^{n-1})+\mathscr{D}_T(f^{n-1})\right)^{\frac{1}{r}}\\& + \mathscr{E}_T(f^{n})\mathscr{E}^{1-\frac{1}{r}}_T(f^{n+1})\left(\mathscr{D}_T(f^{n+1},f^{n})+\mathscr{D}_T(f^{n})\right)^{\frac{1}{r}}\\
        \lesssim&\delta^2 + \delta\mathscr{E}_T(f^{n+1}) + \delta^{1+\frac{2}{r}}\mathscr{E}^{1-\frac{1}{r}}_T(f^{n+1})+\delta\mathscr{E}_T^{1-\frac{1}{r}}(f^{n+1})\mathscr{D}_T^{\frac{1}{r}}(f^{n+1},f^n)\\
        \lesssim&\delta^2 + \delta\mathscr{E}_T(f^{n+1}) + \delta\mathscr{D}_T(f^{n+1},f^n),
    \end{align*}
\endgroup
hence, for sufficiently small $\delta$, it follows that
    \begin{equation}\label{equ:fn+1fn}
        \mathscr{D}_T(f^{n+1},f^n)\lesssim \delta^2 + \delta\mathscr{E}_T(f^{n+1}).
    \end{equation}
Substituting \eqref{equ:fn+1fn} into \eqref{equ:energy-fn+1} gives
    $$ \mathscr{E}_T(f^{n+1}) \lesssim\delta + \delta\mathscr{E}_T(f^{n+1}), $$
Absorbing the remaining small energy term yields
$$ \mathscr{E}_T(f^{n+1})\lesssim\delta, $$
and then
$$\mathscr{D}_T(f^{n+1},f^n)\lesssim \delta^2 + \delta \mathscr{E}_T(f^{n+1})\lesssim\delta^2.$$
Repeating the argument of
Proposition~\ref{prop:div-curl-disspation} for $f^{n+1}$,
we also obtain
    $$ \mathscr{D}_T(f^{n+1}) \lesssim \mathscr{E}^2_0(f_0) + \mathscr{E}_T^2(f^{n+1}) + \mathscr{E}_T^{2-\frac{1}{r}}(f^{n+1})\left(\mathscr{D}_T(f^{n+1},f^n) +\mathscr{D}_T(f^n,f^n)\right)^{\frac{1}{r}}\lesssim \delta^2. $$
This closes the induction. In particular, \eqref{equ:assume-condition} holds for every $n\geq 0,$ proving Lemma \ref{lem:priori-iteration}.
\end{proof}

\subsection{Passage from the approximation to the global solution}
Lemma \ref{lem:priori-iteration} supplies time-uniform bounds for the
approximate system \eqref{equ:iteration-sequence}. These are the estimates
needed to apply the construction for the Cauchy problem
\eqref{equ:Rela-Boltzmann}.

\begin{proof}[Proof of Theorem \ref{thm:rel-main}.]
Use the construction and uniqueness argument of Theorem~1.1 in
\cite{HLPZ2026}, with the relativistic collision and interaction estimates
proved above as its analytic input. The uniform iteration bounds have been
recorded in the preceding lemma; the remaining steps are those of the cited
argument and are not repeated here.
\end{proof}

\appendix
\section{Relativistic identities and analytic tools}\label{sec-app}
We collect the collision identities that underlie the angular analysis, followed by the integral, dyadic, and div--curl estimates used in the functional argument. This order separates the relativistic geometric input from the analytic tools shared with the near-vacuum framework.

\begin{proposition}[Collision identities and estimates]
Assume $\gamma\in(-2,1]$ and retain the collision notation
introduced above. Then
\begin{equation}\label{formu:id-g}
p^0q^0g^2
=
\moc{p-q}^2+\moc{p\times q}^2+\dfrac{g^4}{4}.
\end{equation}
The collision kernel satisfies
\begin{equation}\label{formu:estimate for B}
B(p,q,\omega)\lesssim
\begin{cases}
1,
& \text{if }\gamma\in[0,1],
\\
1+\moc{p-q}^{\gamma},
& \text{if }\gamma\in(-2,0).
\end{cases}
\end{equation}
For the outgoing momenta, we have
\begin{equation}\label{formu:estimate for p'q'}
\widehat{p'}-\widehat{q'}
=
\dfrac{g}{2p'^0q'^0}
\left(
EI-\dfrac{P\otimes P}{E+\sqrt{\mathfrak s}}
\right)\omega
\end{equation}
and
$$
0\leq g^2(p,v):=2(p^0v^0-p\cdot v-1)\leq g^2,\qquad v\in\{p',q'\}.
$$

For $g>0$ and two collision directions
$\omega_1,\omega_2\in\mathbb S^2$,
let $p_i'$ denote the outgoing momentum corresponding to
$\omega_i$, $i=1,2$. In this comparison, the subscripts label the collision directions. Then
\begin{equation}\label{formu:Lip-estimate for omega}
\moc{\omega_1-\omega_2}
\lesssim
\dfrac{p_1'^0E}{g\sqrt{\mathfrak s}}
\moc{\widehat{p_1}'-\widehat{p_2}'}.
\end{equation}
Taking the first component in
\eqref{formu:estimate for p'q'} gives
$$
\moc{\widehat p'_1-\widehat q'_1}
=
\dfrac{g\moc{L\cdot\omega}}{2p'^0q'^0},
\qquad
|L|^2=E^2-P_1^2\geq\mathfrak s.
$$
Finally,
$$
u_{\min}^0
=
\dfrac E2-\dfrac{g\moc P}{2\sqrt{\mathfrak s}}
\geq
\max\left\{
\dfrac E{\mathfrak s},
\dfrac{\mathfrak s}{4E}
\right\}.
$$
\end{proposition}

\begin{proof}
The collision identities and the bounds on $B$ and the energies
follow from \eqref{defi:p'}, the conservation laws, and the
assumption on the collision kernel.
We prove \eqref{formu:Lip-estimate for omega} below.

Conservation of energy and momentum gives
$$
(E-p'^0)^2-\moc{P-p'}^2-1=0,
$$
and hence
$$
\mathfrak s-2Ep'^0+2P\cdot p'=0.
$$
Therefore
$\dfrac1{p'^0}
=\dfrac{2E-2P\cdot\widehat{p}'}{\mathfrak s}$.
For the two collision directions, this yields
\begin{align}\label{formu:Lip-estimate for omega-0}
\moc{p_1'^0-p_2'^0}
&=
p_1'^0p_2'^0
\moc{\dfrac1{p_1'^0}-\dfrac1{p_2'^0}}
=
p_1'^0p_2'^0
\moc{
\dfrac{2P\cdot(\widehat{p_1}'-\widehat{p_2}')}{\mathfrak s}
}\lesssim
\dfrac{p_1'^0p_2'^0\moc P}{\mathfrak s}
\moc{\widehat{p_1}'-\widehat{p_2}'}.
\end{align}

The parametrization \eqref{defi:p'} also gives
\begin{equation}\label{formu:Lip-estimate for omega-2}
p'-\dfrac{p'^0P}{E+\sqrt{\mathfrak s}}
=
\dfrac{g\omega}{2}
+\dfrac{\sqrt{\mathfrak s}P}
{2(E+\sqrt{\mathfrak s})}.
\end{equation}
Since $\moc P^2=E^2-\mathfrak s$ and
$g\leq\sqrt{\mathfrak s}$, we obtain
\begin{equation}\label{formu:Lip-estimate for omega-3}
\moc{
\widehat{p}'-\dfrac{P}{E+\sqrt{\mathfrak s}}
}
\lesssim
\dfrac1{p'^0}
\left(
g+\dfrac{\sqrt{\mathfrak s}\moc P}
{\sqrt{\mathfrak s}+E}
\right)
\lesssim
\dfrac{\sqrt{\mathfrak s}}{p'^0}.
\end{equation}

Subtracting \eqref{formu:Lip-estimate for omega-2}
at $\omega_1$ and $\omega_2$, we obtain
$$
\begin{aligned}
\dfrac g2(\omega_1-\omega_2)
&=
p_1'-p_2'
-\dfrac{P}{E+\sqrt{\mathfrak s}}
(p_1'^0-p_2'^0)
\\
&=
p_1'^0(\widehat p_1'-\widehat p_2')
+(p_1'^0-p_2'^0)
\left(
\widehat p_2'-\dfrac{P}{E+\sqrt{\mathfrak s}}
\right).
\end{aligned}
$$
Combining this identity with
\eqref{formu:Lip-estimate for omega-0} and
\eqref{formu:Lip-estimate for omega-3}, we find
\begin{align*}
\dfrac g2\moc{\omega_1-\omega_2}
&\lesssim
p_1'^0\moc{\widehat p_1'-\widehat p_2'}
+
\moc{p_1'^0-p_2'^0}
\moc{
\widehat p_2'-\dfrac{P}{E+\sqrt{\mathfrak s}}
}
\\
&\lesssim
p_1'^0\moc{\widehat p_1'-\widehat p_2'}
+
\dfrac{\sqrt{\mathfrak s}}{p_2'^0}
\cdot
\dfrac{p_1'^0p_2'^0\moc P}{\mathfrak s}
\moc{\widehat p_1'-\widehat p_2'}
\\
&\lesssim
p_1'^0
\left(1+\dfrac{\moc P}{\sqrt{\mathfrak s}}\right)
\moc{\widehat p_1'-\widehat p_2'}
\\
&\lesssim
\dfrac{p_1'^0E}{\sqrt{\mathfrak s}}
\moc{\widehat p_1'-\widehat p_2'}.
\end{align*}
This proves \eqref{formu:Lip-estimate for omega}
and completes the proof.
\end{proof}

The remaining estimates supply the analytic tools for the weighted function spaces. Their role is complementary to the collision geometry above.
\begin{lemma}\label{Integral Estimate Lemma}
Suppose $\ell_1,\ell_2\in \mathbb{R}$ satisfy $0 \leq \ell_1 < n$ and $\ell_1+\ell_2>n$. Then, uniformly for $s\in \mathbb{R}^n$,
$$\int_{\mathbb{R}^n} \frac{dw}{|w|^{\ell_1}\langle w - s \rangle^{\ell_2}}
\lesssim 1.$$
\end{lemma}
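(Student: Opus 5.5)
We prove the bound by splitting $\mathbb R^n$ into a neighborhood of the origin and its complement, treating the two singular behaviours separately. By translation invariance in the uniform sense, the only dangerous configuration is when the weight $\langle w-s\rangle^{-\ell_2}$ is centred far from $0$; we therefore split further according to whether $|s|\le 2$ or $|s|>2$.

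\emph{Case $|s|\le 2$.} On $\{|w|\le 4\}$ we bound $\langle w-s\rangle^{-\ell_2}$ by a constant. If $\ell_2\ge0$ this constant is $1$; if $\ell_2<0$ it is at most $\langle 6\rangle^{|\ell_2|}$. Integrability of $|w|^{-\ell_1}$ near $0$ then follows from $\ell_1<n$. On $\{|w|\ge4\}$ we have $|w-s|\ge |w|/2$ and $|w-s|\le 2|w|$, so $\langle w-s\rangle\sim\langle w\rangle$. The integrand is then comparable to $|w|^{-\ell_1-\ell_2}$, which is integrable at infinity because $\ell_1+\ell_2>n$.

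\emph{Case $|s|>2$.} We divide $\mathbb R^n$ into three regions.
\begin{itemize}
\item On $A=\{|w|\le |s|/2\}$ we have $\langle w-s\rangle\sim\langle s\rangle$. This gives the contribution $\langle s\rangle^{-\ell_2}\int_{|w|\le|s|/2}|w|^{-\ell_1}dw\sim |s|^{n-\ell_1-\ell_2}\lesssim1$. If $\ell_2<0$, the same comparability still holds and the computation is identical.
\item On $B=\{|w-s|\le|s|/2\}$ we have $|w|\sim|s|$. The contribution is $|s|^{-\ell_1}\int_{|z|\le|s|/2}\langle z\rangle^{-\ell_2}dz$. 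For $\ell_2>n$ this is $\lesssim|s|^{-\ell_1}\le1$ since $|s|>2$. For $\ell_2=n$ it is $\lesssim|s|^{-\ell_1}\log|s|$, which is bounded because $\ell_1>0$ in that case. For $\ell_2<n$ it is $\lesssim|s|^{n-\ell_1-\ell_2}\le1$.
\item On the remaining region $C$ we have $|w|\gtrsim|s|$ and $|w-s|\sim|w|$, since $|w-s|\ge|s|/2$ and $|w|\ge|s|/2$ together give $|w-s|\le 3|w|$ and $|w|\le 3|w-s|$. The integrand is therefore $\lesssim|w|^{-\ell_1-\ell_2}$, and integrating over $|w|\ge|s|/2$ gives $\lesssim|s|^{n-\ell_1-\ell_2}\le1$.
\end{itemize}

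All implicit constants depend only on $n,\ell_1,\ell_2$, which gives the uniformity in $s$.

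The main point to watch is the borderline exponent $\ell_2=n$ in region $B$, together with possibly negative $\ell_2$. The logarithm in the first is absorbed by $|s|^{-\ell_1}$ using $\ell_1=\ell_1+\ell_2-n>0$. For the second, region $B$ gives $\int_{|z|\le|s|/2}\langle z\rangle^{|\ell_2|}dz\lesssim|s|^{n-\ell_2}$, so the total is still $|s|^{n-\ell_1-\ell_2}$. The paper's uses in Proposition~\ref{prop:Estimate for I-loss} ($n=1$, $\ell_1=r'/r$, $\ell_2=\ell r'-2$) fall in the regime $\ell_2>n$, so these edge cases are not needed there.
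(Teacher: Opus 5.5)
Your proof is correct. The paper omits this proof as elementary, and your argument is the standard one that fills the gap: you split into $|s|\le 2$ and, for $|s|>2$, into the regions near $0$, near $s$, and far away, and you absorb the logarithmic case $\ell_2=n$ using $\ell_1>0$. The only superfluous part is the treatment of $\ell_2<0$, which never occurs, since the hypotheses force $\ell_2>n-\ell_1>0$.
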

\begin{proof}
We omit the elementary proof.
\end{proof}
The following summation estimate controls the dyadic interactions.
\begin{lemma}\label{Sum operator}
Let $1\leq r<\infty$ and suppose
$\sum_{i,k\geq-1}2^{2(i+k)/r}|x_{i,k}|<\infty$. Define
\[
 \mathcal T x=\sum_{j,k\geq-1}
 \left(\sum_{\substack{i\geq-1\\|i-j|\leq4}}
       +\sum_{\substack{i\geq-1\\i\geq j-3}}\right)
          2^{2(j+k)/r}x_{i,k}.
\]
Then
\[
 |\mathcal T x|\lesssim\sum_{i,k\geq-1}2^{2(i+k)/r}|x_{i,k}|.
\]
\end{lemma}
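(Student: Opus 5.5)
The plan is to reduce to the two pieces of $\mathcal T$ separately and, in each piece, exchange the order of summation so that the sum over $j$ is performed first with $i,k$ held fixed. By the triangle inequality,
\[
|\mathcal Tx|\leq \sum_{k\geq-1}2^{2k/r}\sum_{i\geq-1}|x_{i,k}|
\left(\sum_{\substack{j\geq-1\\ |j-i|\leq4}}2^{2j/r}
+\sum_{\substack{j\geq-1\\ j\leq i+3}}2^{2j/r}\right).
\]
The interchange is justified by Tonelli, since all terms are now nonnegative.

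\emph{First piece.} For fixed $i$, the condition $|i-j|\leq4$ admits at most nine values of $j$. Each of these satisfies $2^{2j/r}\leq 2^{8/r}2^{2i/r}$. Hence the inner sum is $\lesssim 2^{2i/r}$, with a constant that depends only on $r$.

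\emph{Second piece.} For fixed $i$, the constraint $i\geq j-3$ means $j\leq i+3$, so we must bound a geometric sum truncated from above:
\[
\sum_{-1\leq j\leq i+3}2^{2j/r}\leq \frac{2^{2(i+3)/r}}{1-2^{-2/r}}\lesssim 2^{2i/r}.
\]
This works because the ratio $2^{2/r}>1$ for finite $r$, so the sum is dominated by its largest term. The only point needing any care is this step: the constant blows up as $r\to\infty$ but stays finite for $1\leq r<\infty$, which is exactly the stated range.

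Inserting the two bounds gives $|\mathcal Tx|\lesssim\sum_{i,k}2^{2(i+k)/r}|x_{i,k}|$. The finiteness hypothesis ensures that the rearranged series converges absolutely, so $\mathcal Tx$ is well defined. There is no genuine obstacle; the whole argument rests on the low-to-high geometric summation in the second piece.
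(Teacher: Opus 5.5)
Your proof is correct and is essentially the paper's argument: take absolute values, interchange the nonnegative sums, bound the two $j$-sums for fixed $i$ by a constant times $2^{2i/r}$ (the second via the geometric series with ratio $2^{2/r}>1$), then multiply by $2^{2k/r}|x_{i,k}|$ and sum. Your explicit constants, $9\cdot 2^{8/r}$ and $2^{6/r}/(1-2^{-2/r})$, simply make the paper's $\lesssim$ concrete.
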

\begin{proof}
Take absolute values and interchange the nonnegative sums. Since $2/r>0$,
\[
 \sum_{\substack{j\geq-1\\|i-j|\leq4}}2^{2j/r}
 +\sum_{-1\leq j\leq i+3}2^{2j/r}
 \lesssim2^{2i/r}.
\]
Multiplying by $2^{2k/r}|x_{i,k}|$ and summing proves the claim.
\end{proof}

We next prove the weighted Besov embedding
\eqref{Embedding inequailty}.

\begin{proof}[The proof of \eqref{Embedding inequailty}]
Fix $(t,\eta,p)$ and write $w(y)=W(t,\eta,y,p)$. The polynomial weight
satisfies
\[
 w(y)\lesssim\langle y-z\rangle^\ell w(z)
\]
uniformly in the fixed parameters. For $j\geq0$, choose a Schwartz
reproducing kernel $K_j(y)=2^{2j}K(2^jy)$ whose Fourier multiplier is one
on the support of $\Delta_j$. Then $K_j*\Delta_jf=\Delta_jf$ and
\begin{align*}
 \|w\Delta_jf\|_{L_y^{\infty}}
 &\lesssim\big\||K_j|\langle\cdot\rangle^\ell\big\|_{L_y^{r'}}
                     \|w\Delta_jf\|_{L_y^r}\\
 &\lesssim2^{2j/r}\|w\Delta_jf\|_{L_y^r}.
\end{align*}
For $j=-1$, choose a separate Schwartz multiplier equal to one on the
support of $\chi$; it gives the same bound after adjusting the constant.
Summing the blocks proves \eqref{Embedding inequailty}, first for smooth
functions and then by weighted reconstruction. In particular, the constant
is uniform in time, momentum, and the longitudinal variable.
\end{proof}
The following lemma establishes the boundedness of the dyadic projection operators.
\begin{lemma}\label{ds-bd-lem}
Let $1\leq r \leq \infty$ and $\ell\geq0$. The dyadic blocks and low-frequency cutoffs satisfy
 \begin{equation}\label{bd - w(v)Delta_k}
		\norm{W_p\Delta_j f}_{L_y^r}\lesssim \norm{W_pf}_{L_y^r},\  \norm{W_pS_{j}f}_{L_y^r} \lesssim \norm{W_pf}_{L_y^r}.
	\end{equation}
Taking the $L_{\eta}^r$ norm also gives
	$$  \norm{W_p\Delta_j f}_{L_x^r}\lesssim \norm{W_pf}_{L_x^r}, \norm{W_pS_{j}f}_{L_x^r} \lesssim \norm{W_pf}_{L_x^r}.$$

\end{lemma}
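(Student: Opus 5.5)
The bound is pointwise in $y$, so I will work with the dyadic blocks and the representation of $W_p\Delta_j f$ through a weighted convolution. The weight is $w(y)=W(t,\eta,y,p)=\langle x-(1+t)\widehat p\rangle^\ell(p^0)^m$. With $(t,\eta,p)$ fixed, the factor $(p^0)^m$ is a constant and commutes with every operator in $y$, so it suffices to handle the spatial factor $\langle (\eta,y)-(1+t)\widehat p\rangle^\ell$. For $\ell\ge0$ Peetre's inequality gives
\[
w(y)\lesssim\langle y-z\rangle^\ell w(z),
\]
with a constant depending only on $\ell$ and uniform in $(t,\eta,p)$.

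\emph{Step 1: the block $\Delta_j$.} For $j\ge0$, write $\Delta_jf=\psi_j*f$ with $\psi_j(z)=2^{2j}\psi(2^jz)$. Then
\[
|w(y)\Delta_jf(y)|\le\int|\psi_j(y-z)|\,w(y)\,|f(z)|\,dz\lesssim\int|\psi_j(y-z)|\langle y-z\rangle^\ell\,w(z)|f(z)|\,dz .
\]
Young's inequality gives
\[
\|w\Delta_jf\|_{L^r_y}\le\big\||\psi_j|\langle\cdot\rangle^\ell\big\|_{L^1}\,\|wf\|_{L^r_y}.
\]
The key point is that $\||\psi_j|\langle\cdot\rangle^\ell\|_{L^1}=\int|\psi(z)|\langle 2^{-j}z\rangle^\ell\,dz\le\int|\psi(z)|\langle z\rangle^\ell\,dz<\infty$, uniformly in $j\ge0$, because $\psi$ is Schwartz and $2^{-j}\le1$. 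The case $j=-1$ is handled the same way with $\widetilde\psi$.

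\emph{Step 2: the cutoff $S_j$.} I will not sum the blocks, since that would lose a factor of $j$. Instead I use the kernel representation $S_jf=2^{2j}\widetilde\psi(2^j\cdot)*f$ for $j\ge0$ and repeat Step 1 with $\widetilde\psi$. For $j\le-1$ we have $S_j=0$, so there is nothing to prove.

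\emph{Step 3: the $L^r_x$ bounds.} These follow by raising the $L^r_y$ inequality to the power $r$ and integrating in $\eta$, since all constants are uniform in $\eta$. For $r=\infty$, take a supremum instead.

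The only point needing care is uniformity. The dilation must scale the weight argument by $2^{-j}\le1$ and not in the other direction. In addition, the Peetre constant must not depend on the shift $(1+t)\widehat p$ or on $\eta$, and it does not, since it depends only on $\ell$.
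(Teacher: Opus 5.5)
Your proposal is correct and follows essentially the same route as the paper. Both arguments use the weight comparison $w(y)\lesssim\langle y-z\rangle^\ell w(z)$ (uniform in $t,\eta,p$), then Young's inequality with the kernel $|\psi_j|\langle\cdot\rangle^\ell$, whose $L^1$ norm is bounded uniformly in $j\geq0$ because $2^{-j}\leq1$; the cutoffs $S_j$ are handled by the rescalings of $\widetilde\psi$ directly, without summing blocks.
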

\begin{proof}
For $j\geq0$, take absolute values before comparing weights:
\[
 |W_p(y)\Delta_jf(y)|
 \leq C_\ell\int_{\mathbb R^2}2^{2j}|\psi(2^jz)|\langle z\rangle^\ell
                 W_p(y-z)|f(y-z)|\,dz.
\]
The kernel on the right has uniformly bounded $L^1$ norm for $j\geq0$.
Young's inequality proves the block estimate. The low block uses
$\widetilde\psi$ instead; the low-frequency cutoffs use its rescalings
for $j\geq0$ and vanish for $j\leq-1$. This proves both claims.

The same argument also applies to $V(y)=W_p(y)W_q(y)$, since
\[
 V(y)\lesssim\langle z\rangle^{2\ell}V(y-z).
\]
Thus, uniformly in $p,q,t,\eta$ and $j\geq-1$,
\[
 \|W_pW_q\Delta_jh\|_{L_y^r}
 \lesssim\|W_pW_qh\|_{L_y^r}.
\]
This is the product-weight estimate used in the paraproduct calculation.
\end{proof}

We use the following constant-speed form of the div--curl interaction
estimate; compare \cite{wz-24}. Its $L^1$ formulation is suited to the
absence of longitudinal derivatives in the solution space.
\begin{lemma}[Div--curl interaction estimate]\label{div-curl Lemma}
Let $A,B\geq0$ belong to $C([0,T];L^1(\mathbb R))$ and satisfy
\[
 \partial_tA+a\partial_\eta A=F,\qquad
 \partial_tB+b\partial_\eta B=G
\]
in distributions, where $a,b\in\mathbb R$ are constants and
$F,G\in L^1((0,T)\times\mathbb R)$. Then
\begin{align}
 \int_0^T\!\int_{\mathbb R}|a-b|AB\,d\eta\,dt
 &\leq\|A(0)\|_1\|B(0)\|_1+\|A(T)\|_1\|B(T)\|_1\notag\\
 &\quad+\int_0^T\big(\|A(t)\|_1\|G(t)\|_1
                       +\|B(t)\|_1\|F(t)\|_1\big)\,dt.\label{dc-ine}
\end{align}
\end{lemma}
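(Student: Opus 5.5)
The plan is the standard div--curl (Compensated compactness/interaction functional) argument in one space dimension. I would first prove a pointwise-in-time identity for the interaction potential
\[
\Phi(t)=\iint_{\mathbb R^2}\operatorname{sgn}(\eta-\xi)\,A(t,\eta)B(t,\xi)\,d\eta\,d\xi ,
\]
which satisfies $|\Phi(t)|\le\|A(t)\|_1\|B(t)\|_1$. Assume first that $a>b$; otherwise exchange the roles of $A$ and $B$, or replace $\operatorname{sgn}$ by $-\operatorname{sgn}$. If $a=b$ the left side vanishes and there is nothing to prove.

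Formally, the equations give $\partial_tA=-a\partial_\eta A+F$ and $\partial_tB=-b\partial_\xi B+G$. Integrating by parts in $\eta$ and $\xi$, using $\partial_\eta\operatorname{sgn}(\eta-\xi)=2\delta(\eta-\xi)=-\partial_\xi\operatorname{sgn}(\eta-\xi)$, one obtains
\[
\frac{d}{dt}\Phi=2(a-b)\int_{\mathbb R}A(t,\eta)B(t,\eta)\,d\eta+\iint\operatorname{sgn}(\eta-\xi)\bigl(F(\eta)B(\xi)+A(\eta)G(\xi)\bigr)\,d\eta\,d\xi .
\]
Integrating over $[0,T]$ and bounding the source terms by $\|F\|_1\|B\|_1+\|A\|_1\|G\|_1$ gives
\[
2|a-b|\int_0^T\!\!\int AB\le|\Phi(T)|+|\Phi(0)|+\int_0^T(\ldots),
\]
which is even better than \eqref{dc-ine} by a factor of $2$.

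The main obstacle is justifying this computation with only $C([0,T];L^1)$ regularity and $L^1$ sources, since $AB$ restricted to the diagonal is not a priori meaningful. I would use the explicit characteristic formula. Since $a$ and $b$ are constants, the distributional solutions are given by Duhamel along straight lines:
\[
A(t,\eta)=A_0(\eta-at)+\int_0^tF(s,\eta-a(t-s))\,ds,
\]
and similarly for $B$.

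Alternatively, I would mollify in $\eta$. Take $A_\delta=A*\rho_\delta$ and $B_\delta=B*\rho_\delta$. These solve the same equations with sources $F*\rho_\delta$ and $G*\rho_\delta$, and are smooth enough in $\eta$ that the computation above holds after a time mollification or in integrated form. I would also replace $\operatorname{sgn}$ by a smooth odd approximation $H_\kappa$ with $|H_\kappa|\le1$, so that $H_\kappa'$ is a nonnegative approximate $2\delta$.

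This gives the inequality for $A_\delta,B_\delta$ with the cross term $\iint H_\kappa'(\eta-\xi)A_\delta(\eta)B_\delta(\xi)$, which is nonnegative. Letting $\kappa\to0$ and using Fatou's lemma (all terms are nonnegative) yields the estimate for the mollified functions. The right-hand side norms are bounded by the unmollified ones, since $\|u*\rho_\delta\|_1\le\|u\|_1$.

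Finally, I would let $\delta\to0$. Here $A_\delta\to A$ and $B_\delta\to B$ in $L^1$ for a.e.\ $t$, and a subsequence converges a.e.\ in $(t,\eta)$. Fatou's lemma then applied to $\int_0^T\!\int|a-b|A_\delta B_\delta$ gives \eqref{dc-ine}. Nonnegativity of $A$ and $B$ is used exactly at the two Fatou steps; no positivity of $F$ or $G$ is needed because those terms are only bounded in absolute value.
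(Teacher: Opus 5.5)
Your proposal is correct and uses the same interaction-potential argument as the paper. The paper works with the one-sided functional $I(t)=\int_{\eta<\zeta}A(t,\eta)B(t,\zeta)\,d\eta\,d\zeta$, which for nonnegative $A,B$ satisfies $\Phi=\|A\|_1\|B\|_1-2I$, so the two computations are equivalent: in the paper's version the term $\|A(T)\|_1\|B(T)\|_1$ can be dropped because $I(T)\geq0$, while yours gains the factor $\tfrac12$. Your mollification in $\eta$, smoothing of $\operatorname{sgn}$, and the two Fatou limits supply the justification that the paper only asserts with ``for smooth densities.''
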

\begin{proof}
The case $a=b$ is immediate. After interchanging the equations if necessary,
assume $a>b$. For smooth densities, let
\[
 I(t)=\int_{\eta<\zeta}A(t,\eta)B(t,\zeta)\,d\eta\,d\zeta.
\]
Directly calculation yields
\begin{align*}
 I'(t)={}&-(a-b)\int AB\,d\eta+\int\left(\int_{-\infty}^{\eta}A(t,\zeta)\,d\zeta\right)G(t,\eta)\,d\eta\\
 &+\int\left(\int_{\eta}^{\infty}B(t,\zeta)\,d\zeta\right)F(t,\eta)\,d\eta.
\end{align*}
The potential and source terms satisfy
\[
 0\leq I(t)\leq\|A(t)\|_1\|B(t)\|_1,
\]
\[
 \left|I'(t)+(a-b)\int_{\mathbb R}AB\,d\eta\right|
 \leq\|A(t)\|_1\|G(t)\|_1+\|B(t)\|_1\|F(t)\|_1.
\]
Hence
\begin{align*}
 (a-b)\int_0^T\!\int_{\mathbb R}AB\,d\eta\,dt
 &\leq I(0)-I(T)
   +\int_0^T\big(\|A(t)\|_1\|G(t)\|_1
                    +\|B(t)\|_1\|F(t)\|_1\big)\,dt\\
 &\leq\|A(0)\|_1\|B(0)\|_1+\|A(T)\|_1\|B(T)\|_1\\
 &\quad+\int_0^T\big(\|A(t)\|_1\|G(t)\|_1
                    +\|B(t)\|_1\|F(t)\|_1\big)\,dt,
\end{align*}
which gives \eqref{dc-ine}. This completes the proof of Lemma \ref{div-curl Lemma}.
\end{proof}

\noindent {\bf Acknowledgements:}
	SQL was supported by grants from the National Natural Science Foundation of China under contracts 12325107 and 12471217.
	YZ was supported by grant from the National Natural Science Foundation of China under contract 12571231.

\medskip
\noindent\textbf{Data Availability Statement:}
Data sharing is not applicable to this article as no datasets were generated or analysed during the current study.

\noindent\textbf{Conflict of Interest:}
The authors declare that they have no conflict of interest.

\noindent\textbf{AI usage disclosure:}  The authors used AI-assisted tools for writing assistance and to help check the correctness of the mathematical arguments. The authors have carefully reviewed all AI-assisted revisions and take full responsibility for the content, arguments, and conclusions presented in the manuscript.

\end{document}